\pdfoutput=1
\documentclass[11pt,a4paper,reqno]{amsart}

\usepackage{amssymb}
\usepackage{amsmath}
\usepackage{amsthm}
\usepackage{thmtools}
\usepackage{mathtools}
\usepackage{mathabx}
\usepackage{mathrsfs}
\usepackage{soul}
\usepackage{bm}
\numberwithin{equation}{section}
\usepackage{framed}
\usepackage[bottom]{footmisc}
\usepackage[utf8]{inputenc}
\usepackage[colorlinks=false,hidelinks]{hyperref}
\usepackage[nameinlink,capitalize]{cleveref}
\usepackage{enumitem}
\usepackage{graphicx}
\usepackage[dvipsnames]{xcolor}
\usepackage{relsize}
\usepackage{exscale}
\usepackage{floatrow}
\floatsetup[table]{capposition=top}
\usepackage{pdflscape}
\usepackage{afterpage}
\usepackage[justification=centering]{caption}
\usepackage{rotating}
\usepackage{pdflscape}
\usepackage{tabulary}
\usepackage{caption}
\usepackage{subcaption}

\addtolength{\textwidth}{3 truecm}
\addtolength{\textheight}{1 truecm}
\setlength{\voffset}{-.6 truecm}
\setlength{\hoffset}{-1.3 truecm}

\theoremstyle{plain}

\newtheorem{theorem}[subsection]{Theorem}
\newtheorem{proposition}[subsection]{Proposition}
\newtheorem{lemma}[subsection]{Lemma}
\newtheorem{corollary}[subsection]{Corollary}

\theoremstyle{definition}
\newtheorem{definition}[theorem]{Definition}
\newtheorem{example}[theorem]{Example}

\setlength{\arrayrulewidth}{1mm}
\setlength{\tabcolsep}{18pt}

\makeatletter
\newcommand{\vast}{\bBigg@{4}}
\newcommand{\Vast}{\bBigg@{5}}
\makeatother

\makeatletter
\newcommand*\bigcdot{\mathpalette\bigcdot@{.5}}
\newcommand*\bigcdot@[2]{\mathbin{\vcenter{\hbox{\scalebox{#2}{$\m@th#1\bullet$}}}}}
\makeatother

\newcommand*{\medcup}{\mathbin{\scalebox{1.5}{\ensuremath{\cup}}}}
\newcommand*{\medcap}{\mathbin{\scalebox{1.5}{\ensuremath{\cap}}}}

\usepackage[square,numbers]{natbib}
\bibliographystyle{abbrvnat}

\usepackage{amsaddr}
\usepackage{etoolbox}

\makeatletter
\patchcmd{\@maketitle}
{\ifx\@empty\@dedicatory}
{\ifx\@empty\@date \else {\vskip3ex \centering\@date\par\vskip1ex}\fi
	\ifx\@empty\@dedicatory}
{}{}
\patchcmd{\@maketitle}
{\ifx\@empty\@date\else \@footnotetext{\@setdate}\fi}
{}{}{}
\makeatother

\renewcommand{\leq}{\leqslant}
\renewcommand{\geq}{\geqslant}

\def\vs{\vspace{11pt}}
\def\ni{\noindent}
\def\emph#1{{\it #1}}
\def\textbf#1{{\bf #1}}

\usepackage{subcaption}
\captionsetup[subfigure]{labelfont=rm}

\begin{document}
	
\title{Asymptotic stochastic comparison of random processes}

\author{Sugata Ghosh and Asok K. Nanda}
\address{Department of Mathematics and Statistics\\
	Indian Institute of Science Education and Research Kolkata\\
	Mohanpur 741246, India}
\email{sg18rs017@iiserkol.ac.in}
\email{asok@iiserkol.ac.in}

\subjclass[2010]{60E15, 60G07, 62G30} 
\keywords{Stochastic order, Stochastic process, Order statistics, Record values, Asymptotic analysis, Mixture distribution, Distortion function}

\allowdisplaybreaks
\raggedbottom

\begin{abstract}
	Several methods are available in the literature to stochastically compare random variables and random vectors. We introduce the notion of asymptotic stochastic order for random processes and define four such orders. Various properties and interrelations of the orders are discussed. Sufficient conditions for these orders to hold for certain stochastic processes, evolving from some statistical entities of interest, are derived.
\end{abstract}

\maketitle
\markleft{\uppercase{Asymptotic stochastic order}}
\markright{\uppercase{Sugata Ghosh and Asok K. Nanda}}

\section{Introduction}\label{1-section-introduction}

One of the chief aims of probability and statistics is to develop methods to compare various random entities, which is a practical problem in several areas of studies. In the classical theory of comparison of random variables, stochastic orders provide useful measuring sticks to capture the notion of one random variable being larger or smaller than another, in some appropriate sense. Many such orders are defined and extensively studied in the literature, with areas of applications ranging from probability and statistics (see, for example, \citet{CC_2006}, \citet{MS_2006}, \citet{FK_2013}, \citet{BHSS_2002}, \citet{B_1991}, \citet{H_2017}) to actuarial science, risk management and economics (\citet{KO_1999}, \citet{BM_2006}, \citet{BB_2013}, \citet{W_2017}, \citet{LLM_2018}), operations research (\citet{FMR_2011}), wireless communications (\citet{TRZ_2011}) and other related fields. For a detailed exposition of various stochastic orders and their properties, we refer to \citet{MS_2002} and \citet{SS_2007}.\vs

While there is a rich literature on stochastic ordering of random variables and random vectors, stochastic comparison of random processes has enjoyed lesser attention due to the abstractness of the problem (see \citet{PP_1973}, \citet[chapter $4$]{S_2001}, \citet[chapter $5$]{MS_2002} and \citet{MS_2013}). In this work, we focus on the problem of stochastic comparison of general stochastic processes in an asymptotic sense. To be precise, let $\left\{X_t : t \in T\right\}$ and $\left\{Y_t : t \in T\right\}$ be two stochastic processes, where the common index set $T \subseteq \mathbb{R}$ is unbounded above. We are interested in stochastic comparison between $X_t$ and $Y_t$ from a limiting perspective, as $t \to \infty$.\vs

In this paper, we have defined and analyzed four asymptotic stochastic orders, namely asymptotic usual stochastic order, asymptotic stochastic precedence order, $L_1$-asymptotic usual stochastic order and $\mathcal{W}_2$-asymptotic usual stochastic order. The first two orders are the most straightforward approaches, based directly on the usual stochastic order and stochastic precedence order for random variables. We call these orders {\it semi-quantitative} in nature, due to their abandonment of certain information about the random variables or their respective cumulative distribution functions (cdf), which we have explained in \cref{1-section-motivation-quantitative}. The last two orders, which also are based on usual stochastic order, are more {\it quantitative} in nature and overcome certain issues (discussed in \cref{1-section-motivation-quantitative}) with the first two orders.\vs

The notion of distortion function, which is a simple mathematical tool for transforming one cdf to another, is defined in \citet{D_1994} as follows.
\begin{definition}
	Any function $\phi:[0,1] \to [0,1]$ that is nondecreasing with $\phi(0)=0$ and $\phi(1)=1$ is called a {\it distortion function} or {\it probability transformation function}. \hfill $\blacksquare$
\end{definition}

The concept is applied in several areas including reliability theory and stochastic ordering (see \citet{KS_2010}, \citet{SS_2011}, \citet{N_2013}), insurance pricing and financial risk managements (see \citet{W_1995, W_1996}), expected utility theory (see \citet{WY_1998}) etc. Now, let $X$ and $Y$ be two continuous random variables with respective cdfs $F_X$ and $F_Y$. Also, let $P_X$ and $P_Y$ denote the probability measures generated by $F_X$ and $F_Y$, respectively.
\begin{definition}\label{1-definition-usual-stochastic-order}
	$X$ is said to be less than $Y$ in {\it usual stochastic order} (denoted by $X \leq_{\textnormal{st}} Y$) if $F_X(x) \geq F_Y(x)$, for every $x \in \mathbb{R}$. Also, we say that $X$ is equal to $Y$ in usual stochastic order (denoted by $X =_{st} Y$) if $X \leq_{\textnormal{st}} Y$ and $Y \leq_{\textnormal{st}} X$. \hfill $\blacksquare$
\end{definition}

It can be easily verified that the usual stochastic order is a partial order. However, it lacks the {\it connex} property, i.e. given two random variables it may happen that none of $X \leq_{\textnormal{st}} Y$ and $Y \leq_{\textnormal{st}} X$ hold. Also, it does not capture any possible dependence structure between $X$ and $Y$ since it involves only the respective marginal distributions. \citet{AKS_2002} introduced the following stochastic order which serves as an alternative to the usual stochastic order.

\begin{definition}\label{1-definition-stochastic-precedence-order}
	$X$ is said to be less than $Y$ in {\it stochastic precedence order} (denoted by $X \leq_{\textnormal{sp}} Y$) if $P(X \leq Y) \geq 1/2$. \hfill $\blacksquare$
\end{definition}
$X$ and $Y$ are said to be equal in stochastic precedence order (denoted by $X =_{sp} Y$) if $X \leq_{\textnormal{sp}} Y$ and $Y \leq_{\textnormal{sp}} X$. The quantity $P(X \leq Y)$ serves as a simple measure of dominance of $Y$ over $X$. The analysis of this quantity was initiated by \citet{B_1956} and has great importance in stress-strength analysis (see \citet{K_2003}). The stochastic precedence order has the {\it connex} property and also captures the dependence structure between the two random variables under consideration. However, stochastic precedence order lacks the {\it transitivity property} and hence is not a partial order. This property becomes crucial when more than two random variables are involved in the problem of stochastic comparison. When $X$ and $Y$ are independent, it can be easily verified that $X \leq_{\textnormal{st}} Y$ implies $X \leq_{\textnormal{sp}} Y$.\vs

We emphasize on the fact that $F_X(x) \geq F_Y(x)$ must hold for every real number $x$ for the usual stochastic order to hold, which makes it a very strong condition. On the flip side, as observed by \citet{AKS_2002}, given two random variables $X$ and $Y$, the strength of the definition of usual stochastic order often ends up concluding that none of $X$ and $Y$ dominates the other, even if $F_X \geq F_Y$ holds throughout $\mathbb{R}$, except for an arbitrarily small interval.\vs

Also, it often happens, for example in comparison of the largest order statistics from two groups of independent and identically distributed (iid) random variables, that the distribution functions $F_X$ and $F_Y$ cross each other. However, the probability mass (according to both $P_X$ and $P_Y$) on the region where $F_X$ dominates $F_Y$, is negligible compared to the same on the region where $F_Y$ dominates $F_X$. To motivate the idea, let us consider the following situation. Let $F_X$ and $F_Y$ be the cdfs of standard normal distribution (denoted by $N(0,1)$) and standard Cauchy distribution (denoted by $C(0,1)$), respectively. Clearly, the right tail of $F_Y$ is heavier than that of $F_X$ in the sense that there exists $c \in \mathbb{R}$ such that $F_X(x)>F_Y(x)$, whenever $x>c$. Also, none of the two distributions dominate the other in the sense of usual stochastic order (see \cref{1-fig-N01C01-distribution-functions-ggb}). Now, if one takes two samples (of same size) of random observations, the first from $F_X$ and the second from $F_Y$, then one may expect that the largest observation from the former sample is smaller than that from the latter. However, the plots of the distribution functions of $X_{n:n}$ and $Y_{n:n}$ (with $n=10$) leaves a strong indication towards the dominance of $Y_{n:n}$ over $X_{n:n}$ (see \cref{1-fig-N01C01-largest-order-statistics-distribution-functions-ggb}).\vs
\begin{figure}
	\centering
	\begin{subfigure}{0.45\textwidth}
		\centering
		\includegraphics[width=\linewidth]{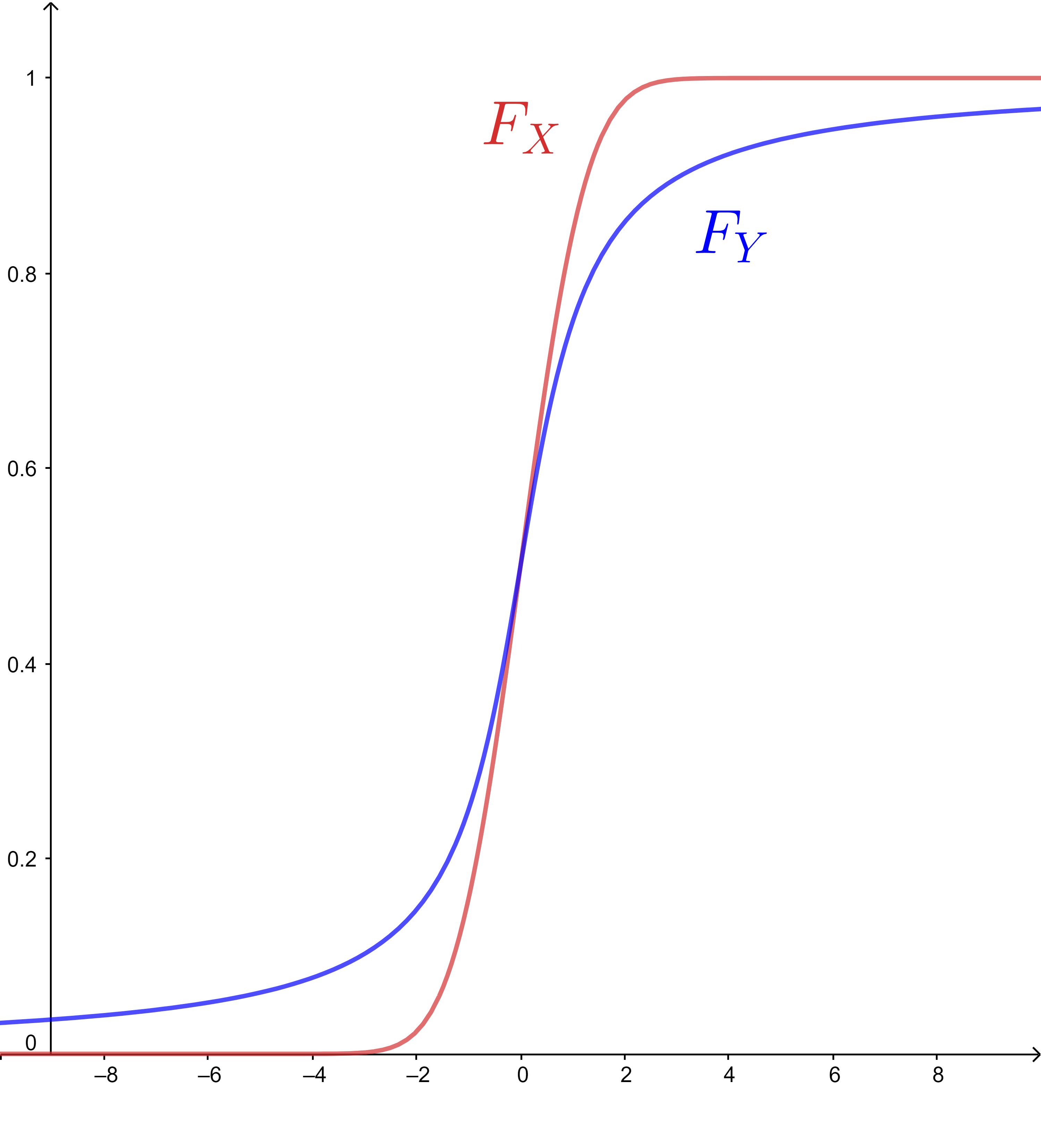}
		\caption{\centering Cumulative distribution functions of $N(0,1)$ and $C(0,1)$}
		\label{1-fig-N01C01-distribution-functions-ggb}
	\end{subfigure}
	\hfill
	\begin{subfigure}{0.45\textwidth}
		\centering
		\includegraphics[width=\linewidth]{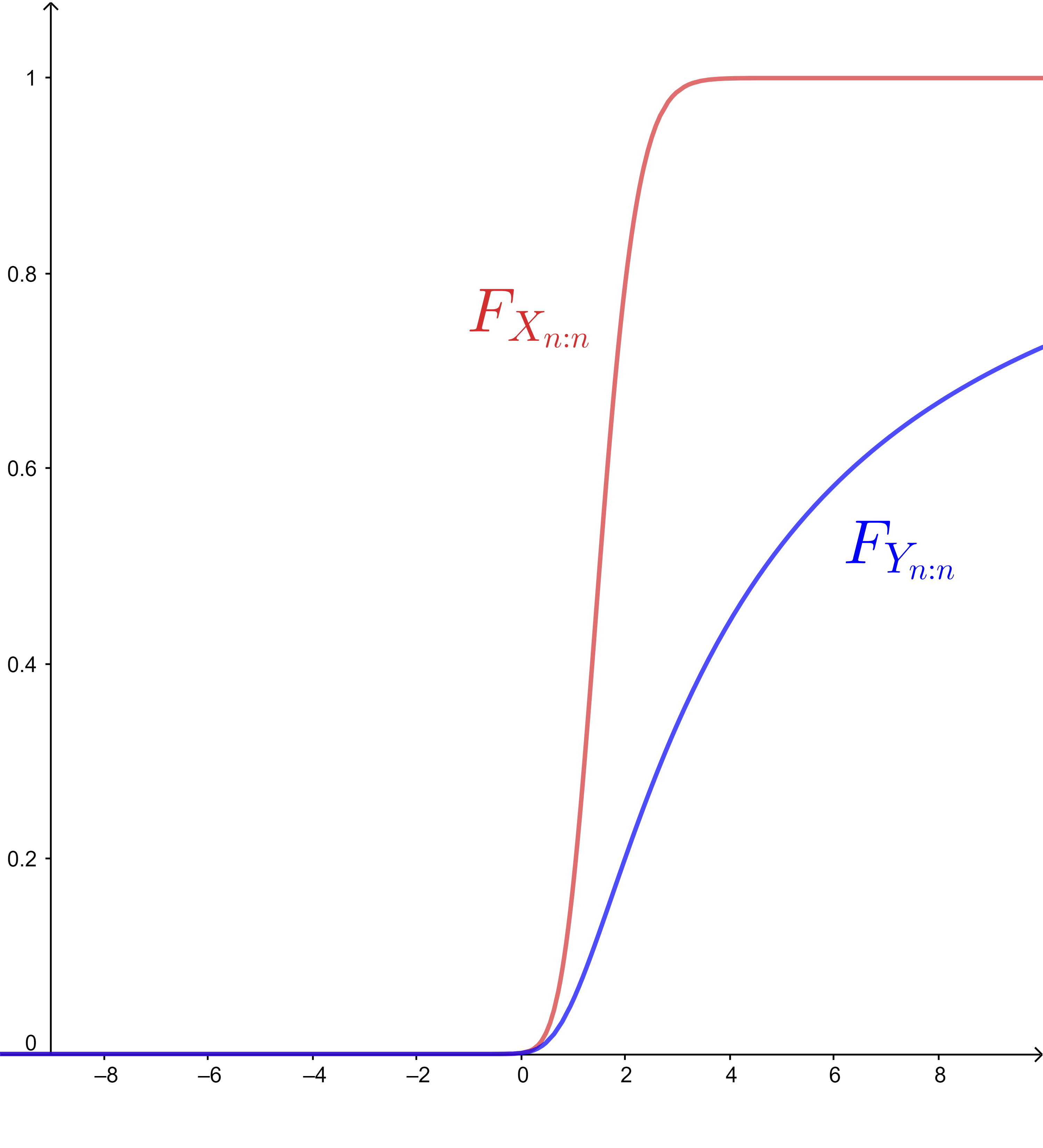}
		\caption{\centering CDFs of the largest order statistics from $N(0,1)$ and $C(0,1)$ with $n=10$}
		\label{1-fig-N01C01-largest-order-statistics-distribution-functions-ggb}
	\end{subfigure}
	\caption{}
	\label{1-fig-distributionplot}
\end{figure}

We observe that, in the region $(-\infty,0)$, i.e. where $F_Y$ dominates $F_X$ (and hence $F_{Y_{n:n}}$ dominates $F_{X_{n:n}}$), the difference between $F_{X_{n:n}}$ and $F_{Y_{n:n}}$ is almost negligible, while in the region $(0,\infty)$, where the dominance is opposite, the difference is inflated. These observations motivate us to examine stochastic dominance that may exist between $X_{n:n}$ and $Y_{n:n}$ in some asymptotic sense. We find out that this is indeed the case, according to the asymptotic stochastic orders, formulated in this work.\vs

Throughout the paper, we stick to the following notations and conventions. For any function $f:\mathbb{R} \to \mathbb{R}$ and any set $A \subseteq \mathbb{R}$, we denote $f(A)=\{f(x) : x \in A\}$. For any $x \in \mathbb{R}$ and $S \subseteq \mathbb{R}$, we denote $\textnormal{dist}\left(x,S\right)=\inf_{y\,\in\,S} \left\vert x-y \right\vert$. For an arbitrary set $T \subseteq \mathbb{R}$ which is unbounded above, the limit of a real-valued function $t \mapsto \psi(t)$ defined on $T$, as $t \to \infty$, is considered in the usual sense, i.e. $\lim_{t \to \infty} \psi(t) \equiv \lim_{t \to \infty;\,t \,\in\, T} \psi(t)=a \in \mathbb{R} \cup \left\{-\infty,\infty\right\}$ if for every sequence $\left\{t_n \in T : n \in \mathbb{N}\right\}$ with $t_n \to \infty$, as $n \to \infty$, we have $\lim_{n \to \infty} \psi(t_n)=a$. For any real-valued function $g$, we define the left-continuous inverse by $g^{\leftarrow}(y)=\inf{\{x \in \mathbb{R} : g(x) \geq y\}}$ and the right-continuous inverse of $g$ by $g^{\rightarrow}(y)=\sup{\{x \in \mathbb{R} : g(x) \leq y\}}$. Evidently, $g^{\leftarrow}(y) \leq g^{\rightarrow}(y)$. We denote $g(x-)=\lim_{t \uparrow x} g(t)$ and $g(x+)=\lim_{t \downarrow x} g(t)$, when the limits exist.\vs

The rest of the paper is structured as follows. In \cref{1-section-ast} and \cref{1-section-asp}, respectively, we give the formal definitions of asymptotic usual stochastic order and asymptotic stochastic precedence order. Various properties and interrelation of these two orders are also discussed. In \cref{1-section-motivation-quantitative}, the need for quantitative approach in formulating asymptotic stochastic orders is illustrated and the basis of two quantitative asymptotic stochastic orders is developed, based on $L_1$ and $\mathcal{W}_2$ distances (defined in \cref{1-section-asymptotic-L-1} and \cref{1-section-asymptotic-W-2}, respectively) between cdfs. In \cref{1-section-asymptotic-L-1} and \cref{1-section-asymptotic-W-2}, respectively, we define and analyze the $L_1$-asymptotic usual stochastic order and the $\mathcal{W}_2$-asymptotic usual stochastic order. Interrelation between the two orders is also shown. In \cref{1-section-applications}, we have derived sufficient conditions for the asymptotic stochastic orders for mixtures of order statistics from samples with possibly non-integral size, as well as record values from two different homogeneous samples, as the sample size becomes large, to hold.\vs

\section{Asymptotic usual stochastic order}\label{1-section-ast}

Let $\left\{X_t : t \in T\right\}$ and $\left\{Y_t : t \in T\right\}$ be two stochastic processes with $T \subseteq \mathbb{R}$. Throughout the paper, we shall assume the index set $T$ to be unbounded above. Note that, when $T=\mathbb{N}$ (resp. $T=[0,\infty)$), the processes become discrete-time (resp. continuous-time) stochastic processes. It is a well-known fact that $X \leq_{\textnormal{st}} Y$ if and only if $F_X^{\leftarrow}(u) \leq F_Y^{\leftarrow}(u)$, for every $u \in (0,1)$. \citet{L_1955} notes that this characterization of usual stochastic order in terms of the quantile functions is more intuitive than \cref{1-definition-usual-stochastic-order}. Now, it may happen that there does not exist any $t \in T$ such that $X_t \leq_{\textnormal{st}} Y_t$ but, as $t \to \infty$, it increasingly gets closer to $X_t \leq_{\textnormal{st}} Y_t$, in the sense that $F_{X_t}^{\leftarrow}(u) \leq F_{Y_t}^{\leftarrow}(u)$ is satisfied for every $u \in (0,1)$, except for a set, whose Lebesgue measure shrinks to $0$, as $t \to \infty$. This leads to the following definition.

\begin{definition}\label{1-definition-asymptotic-usual-stochastic-order}
	Let $\left\{X_t : t \in T\right\}$ and $\left\{Y_t : t \in T\right\}$ be two stochastic processes, with respective classes of distribution functions $\left\{F_{X_t} : t \in T\right\}$ and $\left\{F_{Y_t} : t \in T\right\}$. We say that $X_t$ is smaller than $Y_t$ in asymptotic usual stochastic order, denoted by $X_t \leq_{\textnormal{ast}} Y_t$, as $t \to \infty$, if
	\begin{equation}\label{1-eq-definition-asymptotic-usual-stochastic-order}
	\lim_{t \to \infty} \mu\left(\left\{u \in (0,1) : F_{X_t}^{\leftarrow}(u) > F_{Y_t}^{\leftarrow}(u)\right\}\right)=0,
	\end{equation}
	where $\mu$ is the Lebesgue measure. Also, we say that $X_t$ is equal to $Y_t$ in asymptotic usual stochastic order and denote it by $X_t =_{\text{ast}} Y_t$, as $t \to \infty$, if both $X_t \leq_{\textnormal{ast}} Y_t$, as $t \to \infty$ and $Y_t \leq_{\textnormal{ast}} X_t$, as $t \to \infty$ hold true. \hfill $\blacksquare$
\end{definition}

The next lemma shows that the condition, stated in \eqref{1-eq-definition-asymptotic-usual-stochastic-order}, remains the same if any of the left-continuous inverses is changed to right-continuous inverse. The proof is straightforward and hence omitted.

\begin{lemma}\label{1-lemma-inverse-consistency}
	Let $F_X$ and $F_Y$ be two cdfs. Then the following quantities are equal.
	\begin{align*}
		&(i)\,\,\mu(\{u \in (0,1) : F_X^{\leftarrow}(u)>F_Y^{\leftarrow}(u)\}),\\
		&(ii)\,\,\mu(\{u \in (0,1) : F_X^{\leftarrow}(u)>F_Y^{\rightarrow}(u)\}),\\
		&(iii)\,\,\mu(\{u \in (0,1) : F_X^{\rightarrow}(u)>F_Y^{\rightarrow}(u)\}),\\
		&(iv)\,\,\mu(\{u \in (0,1) : F_X^{\rightarrow}(u)>F_Y^{\leftarrow}(u)\}). \tag*{$\blacksquare$}
	\end{align*}
\end{lemma}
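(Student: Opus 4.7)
The plan is to reduce all four set-valued quantities to a single common one by showing that the left-continuous and right-continuous inverses of a cdf agree off a countable set. Once that is established, each pair among (i)--(iv) differs from the others only on this exceptional null set, so the four Lebesgue measures must coincide.

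First I would establish the following key fact: for any cdf $F$, the set $D(F) := \{u \in (0,1) : F^{\leftarrow}(u) < F^{\rightarrow}(u)\}$ is at most countable. The standard argument is that whenever $F^{\leftarrow}(u) < F^{\rightarrow}(u)$, monotonicity of $F$ together with the defining infimum/supremum forces $F \equiv u$ on the open interval $\bigl(F^{\leftarrow}(u), F^{\rightarrow}(u)\bigr)$: any $x > F^{\leftarrow}(u)$ admits a point $x' \in (F^{\leftarrow}(u),x]$ with $F(x') \geq u$, while any $x < F^{\rightarrow}(u)$ admits a point $x' \in [x, F^{\rightarrow}(u))$ with $F(x') \leq u$. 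Since the open intervals corresponding to distinct values of $u$ are pairwise disjoint and non-degenerate, there can be at most countably many of them, and thus $\mu(D(F)) = 0$.

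Applying this to both $F_X$ and $F_Y$, on the set $(0,1) \setminus \bigl(D(F_X) \cup D(F_Y)\bigr)$ we have $F_X^{\leftarrow}(u) = F_X^{\rightarrow}(u)$ and $F_Y^{\leftarrow}(u) = F_Y^{\rightarrow}(u)$ pointwise, so the four strict inequalities appearing in (i)--(iv) are literally identical as conditions on $u$. Since $D(F_X) \cup D(F_Y)$ is $\mu$-null, the four sets in (i)--(iv) differ pairwise by subsets of a null set, and therefore have the same Lebesgue measure.

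The only step with any real content is the verification that $F$ is constant on the open interval $\bigl(F^{\leftarrow}(u), F^{\rightarrow}(u)\bigr)$; every other ingredient is a direct consequence of the definitions of $F^{\leftarrow}$ and $F^{\rightarrow}$ and of countable additivity of $\mu$.
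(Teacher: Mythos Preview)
Your argument is correct. The paper itself omits the proof of this lemma, calling it ``straightforward,'' so there is no explicit proof to compare against; your approach---reducing to the countability of the set of flat-spot values $D(F)=\{u:F^{\leftarrow}(u)<F^{\rightarrow}(u)\}$ via the disjoint nondegenerate intervals $(F^{\leftarrow}(u),F^{\rightarrow}(u))$---is the natural one and almost certainly what the authors had in mind.
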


If the respective distributions of $X_t$ and $Y_t$ are continuous, for every $t \in T$, then it is possible to characterize asymptotic usual stochastic order in terms of the probability measure assigned to the region of the support where $F_{X_t} \geq F_{Y_t}$, by both $P_{X_t}$ and $P_{Y_t}$, the probability measures induced by $F_{X_t}$ and $F_{Y_t}$, respectively. For that purpose, we need the following lemmas, the proofs of which are given in the appendix.

\begin{lemma}\label{1-lemma-quantile-distribution}
	Let $F_X$ and $F_Y$ be continuous cdfs and let $P_X$ be the probability measure induced by $F_X$. Then we have
	\[
	\mu(\{u \in (0,1) : F_X^{\rightarrow}(u) > F_Y^{\rightarrow}(u)\})=P_X\left(\{x \in \mathbb{R} : F_X(x) < F_Y(x)\}\right).
	\]
\end{lemma}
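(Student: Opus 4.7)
The plan is to express both sides as integrals on the unit interval $(0,1)$ and then match them pointwise via continuity.

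First, I will invoke the inverse probability transform. Since $F_X$ is continuous, the classical fact that $F_X^{\leftarrow}(U)\sim F_X$ for $U$ uniform on $(0,1)$, together with the observation that $F_X^{\leftarrow}$ and $F_X^{\rightarrow}$ agree outside an at most countable set (the heights of any flats of $F_X$), gives $F_X^{\rightarrow}(U)\sim F_X$ as well. Applying this change of variables to the indicator of $B:=\{x\in\mathbb{R}:F_X(x)<F_Y(x)\}$ converts the right-hand side into
\[
P_X(B)=\mu\bigl(\{u\in(0,1):F_X(F_X^{\rightarrow}(u))<F_Y(F_X^{\rightarrow}(u))\}\bigr).
\]

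Next, I will use continuity to simplify the left factor inside the set. For $u\in(0,1)$ the set $\{x:F_X(x)\leq u\}$ is a closed left-infinite ray (using $F_X(-\infty)=0$, $F_X(\infty)=1$, and continuity), so its supremum $F_X^{\rightarrow}(u)$ is a finite real number $c$, and continuity forces $F_X(c)=u$. Substituting $F_X(F_X^{\rightarrow}(u))=u$, the displayed set reduces to $\{u\in(0,1):F_Y(F_X^{\rightarrow}(u))>u\}$.

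Finally, I will establish the pointwise equivalence
\[
F_X^{\rightarrow}(u)>F_Y^{\rightarrow}(u)\iff F_Y(F_X^{\rightarrow}(u))>u,\quad u\in(0,1),
\]
which is what remains to match this set to the left-hand side of the claim. The forward implication is immediate from the definition of the right-continuous inverse: any point strictly larger than $\sup\{y:F_Y(y)\leq u\}$ lies outside that set, so $F_Y$ exceeds $u$ there, applied to the point $F_X^{\rightarrow}(u)$. For the converse, repeating the argument of the previous paragraph with $F_Y$ in place of $F_X$ gives $F_Y(F_Y^{\rightarrow}(u))=u$, so $F_X^{\rightarrow}(u)\leq F_Y^{\rightarrow}(u)$ would force $F_Y(F_X^{\rightarrow}(u))\leq u$ by monotonicity, contradicting the hypothesis.

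The main delicacy throughout is the interplay between the two inverses and the essential use of continuity at two points (to identify $F_X(F_X^{\rightarrow}(u))$ and $F_Y(F_Y^{\rightarrow}(u))$ with $u$); the preceding lemma on inverse consistency lets one work exclusively with $F_X^{\rightarrow}$ without loss. I do not expect any serious obstacle beyond carefully handling the flats of $F_X$ and $F_Y$, which are controlled by continuity plus the fact that the atoms of $F_X^{\rightarrow}-F_X^{\leftarrow}$ form a countable (hence Lebesgue-null) set.
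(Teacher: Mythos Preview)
Your proof is correct. It is essentially the dual of the paper's argument: you pull back via the quantile transform $F_X^{\rightarrow}(U)\sim P_X$ to write $P_X(B)=\mu\bigl((F_X^{\rightarrow})^{-1}(B)\bigr)$ and then establish the pointwise equivalence $F_X^{\rightarrow}(u)>F_Y^{\rightarrow}(u)\iff F_Y(F_X^{\rightarrow}(u))>u$, whereas the paper pushes forward via $F_X(X)\sim\mathrm{Uniform}(0,1)$ to write $P_X(S)=\mu(F_X(S))$ and then proves the set identity $F_X(S)=\{u:F_X^{\rightarrow}(u)>F_Y^{\rightarrow}(u)\}$ by two inclusions. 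The underlying mechanism (continuity gives $F_X(F_X^{\rightarrow}(u))=u$ and $F_Y(F_Y^{\rightarrow}(u))=u$, then compare) is the same in both. Your direction has the small advantage that the change of variables $P_X(B)=\mu\bigl((F_X^{\rightarrow})^{-1}(B)\bigr)$ is immediate, while the paper's $P_X(S)=\mu(F_X(S))$ tacitly uses that flats of $F_X$ are $P_X$-null to handle the possible gap between $S$ and $F_X^{-1}(F_X(S))$.
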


\begin{lemma}\label{1-lemma-no-inconsistency}
	Let $F_X$ and $F_Y$ be continuous cdfs with respective induced probability measures $P_X$ and $P_Y$. Then we have
	\[
	P_X\left(\{x \in \mathbb{R} : F_X(x) < F_Y(x)\}\right)=P_Y\left(\{x \in \mathbb{R} : F_X(x) < F_Y(x)\}\right).
	\]
\end{lemma}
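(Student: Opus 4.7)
The plan is to exploit the continuity of $F_Y - F_X$ to reduce the claim to an endpoint calculation on a countable collection of intervals. Let $A=\{x \in \mathbb{R} : F_X(x) < F_Y(x)\}$. Since $F_X$ and $F_Y$ are continuous, so is $F_Y-F_X$, and therefore $A$ is an open subset of $\mathbb{R}$. First I would invoke the standard fact that every open subset of $\mathbb{R}$ decomposes as an at most countable disjoint union of open intervals, writing $A=\bigsqcup_{n} (a_n,b_n)$ with $-\infty \leq a_n < b_n \leq \infty$.

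Next, I would pin down the values of $F_X$ and $F_Y$ at the endpoints of each constituent interval. By the maximality of $(a_n,b_n)$ as a component of $A$, any finite endpoint lies in $A^c$, so $F_X \geq F_Y$ there; on the other hand, continuity together with $F_X < F_Y$ throughout $(a_n,b_n)$ forces $F_X \leq F_Y$ at the endpoint. Hence $F_X(a_n)=F_Y(a_n)$ whenever $a_n$ is finite, and likewise at $b_n$. The remaining cases $a_n=-\infty$ and $b_n=\infty$ are handled by the corresponding limits $\lim_{x \to -\infty} F_X(x)=\lim_{x \to -\infty} F_Y(x)=0$ and $\lim_{x \to \infty} F_X(x)=\lim_{x \to \infty} F_Y(x)=1$.

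With the endpoint agreement in hand, I would compute, using continuity to equate left limits with values,
\begin{equation*}
P_X\bigl((a_n,b_n)\bigr)=F_X(b_n)-F_X(a_n)=F_Y(b_n)-F_Y(a_n)=P_Y\bigl((a_n,b_n)\bigr),
\end{equation*}
interpreting the boundary values as the appropriate limits when $a_n=-\infty$ or $b_n=\infty$. Finally, countable additivity gives
\begin{equation*}
P_X(A)=\sum_n P_X\bigl((a_n,b_n)\bigr)=\sum_n P_Y\bigl((a_n,b_n)\bigr)=P_Y(A),
\end{equation*}
which is the desired identity.

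I do not anticipate any serious obstacle; the only place that demands care is the endpoint analysis, especially the bookkeeping for unbounded components and the verification that no mass is concealed at the boundary of $A$. This is precisely where the assumed continuity of $F_X$ and $F_Y$ is indispensable, since it simultaneously forces $A$ to be open and allows $F_X=F_Y$ to be read off at the boundary without any jump contribution to either $P_X$ or $P_Y$.
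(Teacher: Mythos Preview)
Your proposal is correct and follows essentially the same approach as the paper: decompose the open set $\{F_X<F_Y\}$ into its countable disjoint open-interval components, use continuity to show $F_X=F_Y$ at the (finite) endpoints, and sum via countable additivity. Your version is in fact slightly more careful than the paper's, since you explicitly treat the possibility of unbounded components $a_n=-\infty$ or $b_n=\infty$, which the paper's proof glosses over.
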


\ni{\it Remark.} To see the necessity of the continuity assumption on $F_X$ and $F_Y$ for both \cref{1-lemma-quantile-distribution} and \cref{1-lemma-no-inconsistency} to hold, consider the cdfs $F_X$ and $F_Y$, defined as $F_X(x)=\frac{1}{2}1_{[-1,1)}(x)+1_{[1,\infty)}(x)$ and $F_Y(x)=\frac{1}{4}1_{[-2,0)}(x)+\frac{3}{4}1_{[0,2)}(x)+1_{[2,\infty)}(x)$, for every $x \in \mathbb{R}$, where $1_A(x)=1$, if $x \in A$ and $0$, otherwise, for every $A \subseteq \mathbb{R}$. \hfill $\blacksquare$\vs

The next theorem shows characterization of the asymptotic usual stochastic order in terms of probability measures of the region $\{x \in \mathbb{R} : F_{X_t}(x) < F_{Y_t}(x)\}$.

\begin{theorem}\label{1-theorem-ast-inverse}
	 Let $\left\{X_t : t \in T\right\}$ and $\left\{Y_t : t \in T\right\}$ be two stochastic processes, with respective classes of continuous cdfs $\left\{F_{X_t} : t \in T\right\}$ and $\left\{F_{Y_t} : t \in T\right\}$. Then the following statements are equivalent.
	\begin{enumerate}[label=\textnormal{(\roman*)}]
	\item $X_t \leq_{\textnormal{ast}} Y_t$, as $t \to \infty$.
	\item $\lim_{t \to \infty} P_{X_t}\left(\left\{x \in \mathbb{R} : F_{X_t}(x) < F_{Y_t}(x)\right\}\right)=0$.
	\item $\lim_{t \to \infty} P_{Y_t}\left(\left\{x \in \mathbb{R} : F_{X_t}(x) < F_{Y_t}(x)\right\}\right)=0$. \hfill $\blacksquare$
	\end{enumerate}
\end{theorem}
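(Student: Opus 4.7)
The plan is to chain the two technical lemmas stated just before the theorem together with \cref{1-lemma-inverse-consistency}, applied uniformly in $t \in T$, so that the three quantities governing (i), (ii), and (iii) are in fact identically equal for every $t$ (not merely asymptotically). Once this identification is done, the equivalence of the three limit statements is immediate from the uniqueness of limits.

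More concretely, I would first invoke \cref{1-lemma-inverse-consistency} to rewrite the Lebesgue measure in \eqref{1-eq-definition-asymptotic-usual-stochastic-order} as
\[
\mu\bigl(\{u \in (0,1) : F_{X_t}^{\rightarrow}(u) > F_{Y_t}^{\rightarrow}(u)\}\bigr),
\]
since the left- and right-continuous inverses may be interchanged freely. Then, under the continuity hypothesis on $F_{X_t}$ and $F_{Y_t}$, \cref{1-lemma-quantile-distribution} (applied for each fixed $t$) identifies this quantity with $P_{X_t}\bigl(\{x \in \mathbb{R} : F_{X_t}(x) < F_{Y_t}(x)\}\bigr)$. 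Passing to the limit as $t \to \infty$ yields the equivalence of (i) and (ii). For the equivalence of (ii) and (iii), \cref{1-lemma-no-inconsistency} shows that for every $t \in T$ the $P_{X_t}$- and $P_{Y_t}$-masses of the set $\{x \in \mathbb{R} : F_{X_t}(x) < F_{Y_t}(x)\}$ coincide, from which the equivalence of the two limit conditions is immediate.

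There is essentially no substantive obstacle, since the theorem is a straightforward packaging of the three preceding lemmas; the only thing to keep in mind is to make sure every step is justified pointwise in $t$ before taking limits, and to explicitly note where the continuity hypothesis on the marginal cdfs (needed for both \cref{1-lemma-quantile-distribution} and \cref{1-lemma-no-inconsistency}) is used.
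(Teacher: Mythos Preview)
Your proposal is correct and takes essentially the same approach as the paper: the paper's proof consists of the single sentence that the result follows from \cref{1-lemma-inverse-consistency}, \cref{1-lemma-quantile-distribution} and \cref{1-lemma-no-inconsistency}, which is exactly the chaining you describe. Your write-up is in fact more detailed than the paper's, and correctly flags where the continuity hypothesis is used.
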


The proof follows from the lemmas \ref{1-lemma-inverse-consistency}, \ref{1-lemma-quantile-distribution} and \ref{1-lemma-no-inconsistency}. Next, we give a characterization of asymptotic equality in usual stochastic order in the next proposition. The proof trivially follows from \cref{1-definition-asymptotic-usual-stochastic-order} and is omitted.
\begin{proposition}\label{1-proposition-asymptotic-usual-stochastic-order-equality}
	$X_t =_{\textnormal{ast}} Y_t$, as $t \to \infty$ if and only if
	\[
	\lim_{t \to \infty} \mu(\{u \in (0,1) : F_{X_t}^{\leftarrow}(u)=F_{Y_t}^{\leftarrow}(u)\})=1. \tag*{$\blacksquare$}
	\]
\end{proposition}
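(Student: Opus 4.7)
The plan is to note that the open interval $(0,1)$ is partitioned, for each fixed $t \in T$, into the three disjoint Borel sets
\[
A_t = \{u \in (0,1) : F_{X_t}^{\leftarrow}(u) > F_{Y_t}^{\leftarrow}(u)\}, \quad B_t = \{u \in (0,1) : F_{X_t}^{\leftarrow}(u) < F_{Y_t}^{\leftarrow}(u)\},
\]
\[
C_t = \{u \in (0,1) : F_{X_t}^{\leftarrow}(u) = F_{Y_t}^{\leftarrow}(u)\},
\]
so that $\mu(A_t) + \mu(B_t) + \mu(C_t) = 1$ for every $t \in T$.

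By \cref{1-definition-asymptotic-usual-stochastic-order}, the condition $X_t \leq_{\textnormal{ast}} Y_t$ as $t \to \infty$ is exactly $\lim_{t \to \infty} \mu(A_t) = 0$, and similarly $Y_t \leq_{\textnormal{ast}} X_t$ as $t \to \infty$ is exactly $\lim_{t \to \infty} \mu(B_t) = 0$. Hence the assumption $X_t =_{\textnormal{ast}} Y_t$ as $t \to \infty$ is the conjunction of these two limits, which by the partition identity is equivalent to $\lim_{t \to \infty} \mu(C_t) = 1$. For the converse, $\mu(C_t) \to 1$ combined with $\mu(A_t), \mu(B_t) \geq 0$ and $\mu(A_t) + \mu(B_t) = 1 - \mu(C_t) \to 0$ forces both $\mu(A_t) \to 0$ and $\mu(B_t) \to 0$ by a squeeze argument.

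There is no real obstacle here; the only thing worth verifying carefully is that $A_t$, $B_t$, $C_t$ are Borel measurable so that $\mu(\cdot)$ is well-defined on them, which follows from the measurability of $F_{X_t}^{\leftarrow}$ and $F_{Y_t}^{\leftarrow}$ as monotone functions. This justifies why the authors regard the proof as trivial and omit it.
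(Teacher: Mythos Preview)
Your proof is correct and is exactly the argument the paper has in mind when it says the result ``trivially follows from \cref{1-definition-asymptotic-usual-stochastic-order}'' and omits the proof: the partition $\mu(A_t)+\mu(B_t)+\mu(C_t)=1$ together with the definition of $=_{\textnormal{ast}}$ immediately gives both directions.
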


The next result shows that the asymptotic usual stochastic order is a partial order.

\begin{theorem}\label{1-theorem-asymptotic-usual-stochastic-order-partial-order}
	Asymptotic usual stochastic order is a partial order.
\end{theorem}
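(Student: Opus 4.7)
The plan is to verify the three defining properties of a partial order for $\leq_{\text{ast}}$: reflexivity, antisymmetry (relative to the equality $=_{\text{ast}}$), and transitivity. Antisymmetry is a tautology given how $=_{\text{ast}}$ was defined in \cref{1-definition-asymptotic-usual-stochastic-order}, namely as the conjunction of $X_t \leq_{\text{ast}} Y_t$ and $Y_t \leq_{\text{ast}} X_t$, so no argument is needed for that part. Reflexivity is also immediate: for any process $\{X_t : t \in T\}$, the set $\{u \in (0,1) : F_{X_t}^{\leftarrow}(u) > F_{X_t}^{\leftarrow}(u)\}$ is empty for every $t \in T$, so its Lebesgue measure is identically zero, and the defining limit in \eqref{1-eq-definition-asymptotic-usual-stochastic-order} is trivially $0$.

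The substance of the proof is transitivity. Given three processes $\{X_t\}$, $\{Y_t\}$, $\{Z_t\}$ with $X_t \leq_{\text{ast}} Y_t$ and $Y_t \leq_{\text{ast}} Z_t$ as $t \to \infty$, I would introduce the three exceptional sets
\begin{align*}
A_t &= \{u \in (0,1) : F_{X_t}^{\leftarrow}(u) > F_{Y_t}^{\leftarrow}(u)\},\\
B_t &= \{u \in (0,1) : F_{Y_t}^{\leftarrow}(u) > F_{Z_t}^{\leftarrow}(u)\},\\
C_t &= \{u \in (0,1) : F_{X_t}^{\leftarrow}(u) > F_{Z_t}^{\leftarrow}(u)\},
\end{align*}
and establish the pointwise inclusion $C_t \subseteq A_t \cup B_t$. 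Indeed, for any $u \notin A_t \cup B_t$, we have $F_{X_t}^{\leftarrow}(u) \leq F_{Y_t}^{\leftarrow}(u) \leq F_{Z_t}^{\leftarrow}(u)$, so $u \notin C_t$. Contrapositively, $u \in C_t$ forces $u \in A_t \cup B_t$.

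From this inclusion, subadditivity of $\mu$ gives $\mu(C_t) \leq \mu(A_t) + \mu(B_t)$ for every $t \in T$. By the hypotheses $X_t \leq_{\text{ast}} Y_t$ and $Y_t \leq_{\text{ast}} Z_t$, both $\mu(A_t)$ and $\mu(B_t)$ vanish as $t \to \infty$, so $\mu(C_t) \to 0$, which is exactly $X_t \leq_{\text{ast}} Z_t$. I do not foresee any real obstacle here — the argument is essentially the standard proof that a union of two null-measure sets is null, transported across the limit in $t$; nothing depends on continuity of the distribution functions, so \cref{1-theorem-ast-inverse} is not needed, and the ordering of quantile functions pointwise in $u$ does all the work.
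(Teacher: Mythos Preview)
Your proof is correct and follows essentially the same approach as the paper: reflexivity and antisymmetry are dismissed as trivial, and transitivity is established via the inclusion $C_t \subseteq A_t \cup B_t$ together with subadditivity of $\mu$. The only cosmetic difference is that you argue the inclusion by contrapositive while the paper argues it directly, but the content is identical.
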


\begin{proof}
	Asymptotic usual stochastic order is trivially reflexive and antisymmetric. Let $\left\{X_t : t \in T\right\}$, $\left\{Y_t : t \in T\right\}$ and $\left\{Z_t : t \in T\right\}$ be three stochastic processes. To prove transitivity, we have to show that $X_t \leq_{\textnormal{ast}} Y_t$, as $t \to \infty$ and $Y_t \leq_{\textnormal{ast}} Z_t$, as $t \to \infty$ imply that $X_t \leq_{\textnormal{ast}} Z_t$, as $t \to \infty$. Observe that, $F_{X_t}^{\leftarrow}(u) > F_{Z_t}^{\leftarrow}(u)$ implies that at least one of the conditions $F_{X_t}^{\leftarrow}(u) > F_{Y_t}^{\leftarrow}(u)$ and $F_{Y_t}^{\leftarrow}(u) > F_{Z_t}^{\leftarrow}(u)$ must hold, for otherwise we shall have $F_{X_t}^{\leftarrow}(u) \leq F_{Z_t}^{\leftarrow}(u)$, a contradiction. Let us denote $A_{0,t}=\{u \in (0,1) : F_{X_t}^{\leftarrow}(u) > F_{Y_t}^{\leftarrow}(u)\}$, $B_{0,t}=\{u \in (0,1) : F_{Y_t}^{\leftarrow}(u) > F_{Z_t}^{\leftarrow}(u)\}$ and $C_{0,t}=\{u \in (0,1) : F_{X_t}^{\leftarrow}(u) > F_{Z_t}^{\leftarrow}(u)\}$. Then $C_{0,t} \subseteq A_{0,t} \medcup B_{0,t}$. Thus, $\mu(C_{0,t}) \leq \mu(A_{0,t} \medcup B_{0,t}) \leq \mu(A_{0,t})+\mu(B_{0,t})$. Now, $\lim_{t \to \infty} \mu(A_{0,t})=0$ and $\lim_{t \to \infty} \mu(B_{0,t})=0$, due to the assumptions that $X_t \leq_{\textnormal{ast}} Y_t$, as $t \to \infty$ and $Y_t \leq_{\textnormal{ast}} Z_t$, as $t \to \infty$, respectively. Hence, $\lim_{t \to \infty} \mu(C_{0,t})=0$ and the proof follows.
\end{proof}

The next theorem shows that asymptotic usual stochastic order for stochastic processes is closed under nondecreasing, continuous transformations.

\begin{theorem}\label{1-theorem-ast-increasing-transformations}
	Let $\left\{X_t : t \in T\right\}$ and $\left\{Y_t : t \in T\right\}$ be two stochastic processes such that $X_t \leq_{\textnormal{ast}} Y_t$, as $t \to \infty$. Also, let $\psi:\mathbb{R} \to \mathbb{R}$ be a nondecreasing, continuous function. Then $\psi(X_t) \leq_{\textnormal{ast}} \psi(Y_t)$, as $t \to \infty$. Furthermore, if $\psi$ is strictly increasing, then $X_t \leq_{\textnormal{ast}} Y_t$, as $t \to \infty$ if and only if $\psi(X_t) \leq_{\textnormal{ast}} \psi(Y_t)$, as $t \to \infty$.
\end{theorem}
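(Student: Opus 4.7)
The plan is to reduce everything to the quantile-transformation identity
\[
F_{\psi(X)}^{\leftarrow}(u) = \psi\bigl(F_X^{\leftarrow}(u)\bigr) \quad \text{for every } u \in (0,1),
\]
valid for any nondecreasing, continuous $\psi:\mathbb{R} \to \mathbb{R}$ and any random variable $X$. With $q=F_X^{\leftarrow}(u)$, the verification reduces to showing that $P(\psi(X)\leq\psi(q))\geq u$ while $P(\psi(X)\leq y)<u$ for every $y<\psi(q)$. The latter hinges on the fact that $\psi^{\rightarrow}(y)<q$ whenever $y<\psi(q)$, which follows from the continuity of $\psi$: otherwise the sup defining $\psi^{\rightarrow}(y)$ together with left-continuity of $\psi$ at $q$ would force $\psi(q)\leq y$, a contradiction.

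Once the identity is in hand, the first assertion is immediate from the set-theoretic inclusion
\[
\{u \in (0,1) : F_{\psi(X_t)}^{\leftarrow}(u) > F_{\psi(Y_t)}^{\leftarrow}(u)\} \subseteq \{u \in (0,1) : F_{X_t}^{\leftarrow}(u) > F_{Y_t}^{\leftarrow}(u)\},
\]
which is obtained by substituting the identity on the left-hand side and noting that, for nondecreasing $\psi$, the inequality $\psi(a)>\psi(b)$ forces $a>b$ (contrapositive: $a\leq b \Rightarrow \psi(a)\leq\psi(b)$). Taking Lebesgue measure and invoking the hypothesis $X_t\leq_{\textnormal{ast}} Y_t$ yields $\psi(X_t)\leq_{\textnormal{ast}} \psi(Y_t)$, as $t\to\infty$.

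For the equivalence when $\psi$ is strictly increasing, the reverse implication $a>b \Rightarrow \psi(a)>\psi(b)$ is also valid, so the displayed inclusion becomes an equality of sets. Consequently the two Lebesgue measures coincide for every $t\in T$, and $X_t\leq_{\textnormal{ast}} Y_t$ if and only if $\psi(X_t)\leq_{\textnormal{ast}} \psi(Y_t)$, both as $t\to\infty$.

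The main technical obstacle is establishing the quantile-transformation identity, which calls for a careful case analysis on the flat portions of $\psi$ (that is, when $\psi^{-1}(\{\psi(q)\})$ is a non-degenerate interval) and uses continuity of $\psi$ to transfer strict inequalities across limits. Beyond that, the proof is pure set manipulation combined with the definition of $\leq_{\textnormal{ast}}$ given in \cref{1-definition-asymptotic-usual-stochastic-order}.
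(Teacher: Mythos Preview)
Your proposal is correct and follows essentially the same route as the paper: establish the quantile-transformation identity $F_{\psi(X)}^{\leftarrow}(u)=\psi(F_X^{\leftarrow}(u))$ (the paper states the right-continuous analogue $F_{\psi(X)}^{\rightarrow}(u)=\psi(F_X^{\rightarrow}(u))$ and then appeals to \cref{1-lemma-inverse-consistency}), use monotonicity of $\psi$ to obtain the set inclusion, and upgrade to equality when $\psi$ is strictly increasing. The only cosmetic difference is your working directly with the left-continuous inverse, which lets you bypass \cref{1-lemma-inverse-consistency}; otherwise the arguments coincide.
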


\begin{proof}
	Let us consider the situation where $\psi$ is nondecreasing. Then, for every $x \in \mathbb{R}$, we have $F_{\psi(X_t)}(x)=F_{X_t}(\psi^{\rightarrow}(x))$ and $F_{\psi(Y_t)}(x)=F_{Y_t}(\psi^{\rightarrow}(x))$. Thus, for every $u \in (0,1)$, we have $F_{\psi(X_t)}^{\rightarrow}(u)=\psi(F_{X_t}^{\rightarrow}(u))$ and $F_{\psi(Y_t)}^{\rightarrow}(u)=\psi(F_{Y_t}^{\rightarrow}(u))$. Since $\psi$ is nondecreasing, we have
	\begin{align}\label{1-eq-theorem-ast-increasing-transformations}
	&\phantom{\,\,\,\,\,\,\,\,}\mu\left(\left\{u \in \left(0,1\right) : F_{\psi(X_t)}^{\rightarrow}(u)>F_{\psi(Y_t)}^{\rightarrow}(u)\right\}\right)\nonumber\\
	&=\mu\left(\left\{u \in \left(0,1\right) : \psi\left(F_{X_t}^{\rightarrow}(u)\right)>\psi\left(F_{Y_t}^{\rightarrow}(u)\right)\right\}\right)\nonumber\\
	&=1-\mu\left(\left\{u \in \left(0,1\right) : \psi\left(F_{X_t}^{\rightarrow}(u)\right) \leq \psi\left(F_{Y_t}^{\rightarrow}(u)\right)\right\}\right)\nonumber\\
	&\leq 1-\mu\left(\left\{u \in \left(0,1\right) : F_{X_t}^{\rightarrow}(u) \leq F_{Y_t}^{\rightarrow}(u)\right\}\right)\\
	&=\mu\left(\left\{u \in \left(0,1\right) : F_{X_t}^{\rightarrow}(u)>F_{Y_t}^{\rightarrow}(u)\right\}\right).\nonumber
	\end{align}
	By taking limit as $t \to \infty$ and applying \cref{1-lemma-inverse-consistency}, it follows from \cref{1-definition-asymptotic-usual-stochastic-order} that $\psi(X_t) \leq_{\textnormal{ast}} \psi(Y_t)$, as $t \to \infty$. If $\psi$ is strictly increasing, then \eqref{1-eq-theorem-ast-increasing-transformations} becomes an equality, which implies that $X_t \leq_{\textnormal{ast}} Y_t$, as $t \to \infty$ if and only if $\psi(X_t) \leq_{\textnormal{ast}} \psi(Y_t)$, as $t \to \infty$.
\end{proof}

The next lemma shows that asymptotic usual stochastic order is reversed by negation of stochastic processes. The proof follows from \cref{1-lemma-inverse-consistency} and the observations that $F_{-X}^{\leftarrow}(u)=-F_X^{\rightarrow}(1-u)$ and $F_{-X}^{\rightarrow}(u)=-F_X^{\leftarrow}(1-u)$, for every $u \in (0,1)$, for any random variable $X$.

\begin{lemma}\label{1-lemma-ast-negation-2}
	Let $\left\{X_t : t \in T\right\}$ and $\left\{Y_t : t \in T\right\}$ be two stochastic processes such that $X_t \leq_{\textnormal{ast}} Y_t$, as $t \to \infty$. Then $-Y_t \leq_{\textnormal{ast}} -X_t$, as $t \to \infty$. \hfill $\blacksquare$
\end{lemma}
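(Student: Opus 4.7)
My plan is to take the two identities given in the hint as the starting point and translate the Lebesgue-measure condition in \cref{1-definition-asymptotic-usual-stochastic-order} for the processes $\{-Y_t\}$ and $\{-X_t\}$ into the corresponding condition for $\{X_t\}$ and $\{Y_t\}$ via a substitution $v = 1-u$. Concretely, I would first note that for any random variable $X$ and any $u \in (0,1)$ the identity $F_{-X}^{\leftarrow}(u) = -F_X^{\rightarrow}(1-u)$ holds, which follows from $F_{-X}(x) = P(-X \leq x) = 1 - P(X < -x) = 1 - F_X((-x)-)$ for continuity points and a direct check at jumps (together with the definition of the left-continuous inverse). The companion identity $F_{-X}^{\rightarrow}(u) = -F_X^{\leftarrow}(1-u)$ is obtained analogously.

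Using these, the set appearing in \cref{1-definition-asymptotic-usual-stochastic-order} applied to the pair $(-Y_t,-X_t)$ becomes
\begin{align*}
	\bigl\{u \in (0,1) : F_{-Y_t}^{\leftarrow}(u) > F_{-X_t}^{\leftarrow}(u)\bigr\}
	&= \bigl\{u \in (0,1) : -F_{Y_t}^{\rightarrow}(1-u) > -F_{X_t}^{\rightarrow}(1-u)\bigr\}\\
	&= \bigl\{u \in (0,1) : F_{X_t}^{\rightarrow}(1-u) > F_{Y_t}^{\rightarrow}(1-u)\bigr\}.
\end{align*}
Applying the change of variable $v = 1-u$, which is a measure-preserving involution of $(0,1)$, the Lebesgue measure of this set equals $\mu\bigl(\{v \in (0,1) : F_{X_t}^{\rightarrow}(v) > F_{Y_t}^{\rightarrow}(v)\}\bigr)$.

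Now I invoke \cref{1-lemma-inverse-consistency}, which asserts equality of this quantity with $\mu\bigl(\{v \in (0,1) : F_{X_t}^{\leftarrow}(v) > F_{Y_t}^{\leftarrow}(v)\}\bigr)$. By the hypothesis $X_t \leq_{\textnormal{ast}} Y_t$ as $t \to \infty$ together with \cref{1-definition-asymptotic-usual-stochastic-order}, this last quantity tends to $0$, so the same is true of $\mu\bigl(\{u \in (0,1) : F_{-Y_t}^{\leftarrow}(u) > F_{-X_t}^{\leftarrow}(u)\}\bigr)$, yielding $-Y_t \leq_{\textnormal{ast}} -X_t$ as $t \to \infty$.

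The only delicate point is establishing the two quantile identities for $-X$ with full care (handling points where $F_X$ is not continuous or not strictly increasing); once those are in hand, the rest is a formal manipulation plus one appeal to \cref{1-lemma-inverse-consistency}. Since the authors already flag these identities as the essential input and state that the proof is immediate, I would present the identities as a short self-contained preliminary computation and then carry out the three-line reduction above.
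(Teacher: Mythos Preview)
Your proposal is correct and follows precisely the route the paper indicates: the paper states (without giving a full proof) that the result follows from \cref{1-lemma-inverse-consistency} together with the identities $F_{-X}^{\leftarrow}(u)=-F_X^{\rightarrow}(1-u)$ and $F_{-X}^{\rightarrow}(u)=-F_X^{\leftarrow}(1-u)$, and you have carried out exactly this reduction via the substitution $v=1-u$. There is nothing to add.
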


Observing that if $\psi$ is nonincreasing, then $-\psi$ is nondecreasing and using \cref{1-lemma-ast-negation-2} we have the following corollary of \cref{1-theorem-ast-increasing-transformations}.

\begin{corollary}\label{1-corollary-ast-decreasing-transformations}
	Let $\left\{X_t : t \in T\right\}$ and $\left\{Y_t : t \in T\right\}$ be two stochastic processes such that $X_t \leq_{\textnormal{ast}} Y_t$, as $t \to \infty$. Also, let $\psi:\mathbb{R} \to \mathbb{R}$ be a nonincreasing,
	continuous function. Then $\psi(Y_t) \leq_{\textnormal{ast}} \psi(X_t)$,
	as $t \to \infty$. Furthermore, if $\psi$ is strictly decreasing, then $X_t \leq_{\textnormal{ast}} Y_t$, as $t \to \infty$ if and only if $\psi(Y_t) \leq_{\textnormal{ast}} \psi(X_t)$, as $t \to \infty$. \hfill $\blacksquare$
\end{corollary}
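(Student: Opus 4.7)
The plan is to reduce the corollary directly to \cref{1-theorem-ast-increasing-transformations} and \cref{1-lemma-ast-negation-2}, following the line of reasoning sketched in the sentence immediately preceding the corollary. I would set $\phi := -\psi$ and note that, since $\psi$ is nonincreasing and continuous, $\phi$ is nondecreasing and continuous. Applying \cref{1-theorem-ast-increasing-transformations} to the hypothesis $X_t \leq_{\textnormal{ast}} Y_t$ then yields $\phi(X_t) \leq_{\textnormal{ast}} \phi(Y_t)$, i.e.\ $-\psi(X_t) \leq_{\textnormal{ast}} -\psi(Y_t)$, as $t \to \infty$. Next, I would apply \cref{1-lemma-ast-negation-2} to the pair of processes $-\psi(X_t)$, $-\psi(Y_t)$ to obtain $-(-\psi(Y_t)) \leq_{\textnormal{ast}} -(-\psi(X_t))$, which simplifies to $\psi(Y_t) \leq_{\textnormal{ast}} \psi(X_t)$, the first assertion.

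For the strict version, I would exploit the fact that if $\psi$ is strictly decreasing, then $\phi = -\psi$ is strictly increasing, so the second clause of \cref{1-theorem-ast-increasing-transformations} provides the equivalence $X_t \leq_{\textnormal{ast}} Y_t$ if and only if $-\psi(X_t) \leq_{\textnormal{ast}} -\psi(Y_t)$, as $t \to \infty$. To close the argument, I would observe that \cref{1-lemma-ast-negation-2}, although stated as a one-way implication, is in fact an equivalence: applying it to $U_t \leq_{\textnormal{ast}} V_t$ produces $-V_t \leq_{\textnormal{ast}} -U_t$, and applying it again to that conclusion recovers $U_t \leq_{\textnormal{ast}} V_t$ after using $-(-Z) = Z$. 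Thus, negation preserves ``ast''-order in both directions, and chaining the two equivalences gives $X_t \leq_{\textnormal{ast}} Y_t$ if and only if $\psi(Y_t) \leq_{\textnormal{ast}} \psi(X_t)$.

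The whole argument is essentially a two-line unwinding of the preceding theorem and lemma, so I do not expect any genuine obstacle. The only point meriting explicit comment is the bidirectionality of \cref{1-lemma-ast-negation-2}, which is what upgrades the strictly decreasing case from a one-sided implication to a full iff.
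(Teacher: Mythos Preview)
Your proposal is correct and follows exactly the approach the paper indicates in the sentence preceding the corollary: set $\phi=-\psi$, apply \cref{1-theorem-ast-increasing-transformations}, then apply \cref{1-lemma-ast-negation-2}. Your explicit observation that \cref{1-lemma-ast-negation-2} is an equivalence (by applying it twice) is a nice point the paper leaves implicit but is indeed what makes the strictly decreasing iff work.
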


In particular, for linear transformations of stochastic processes, we have the following result.

\begin{corollary}\label{1-corollary-ast-linear-transformations}
	Let $\left\{X_t : t \in T\right\}$ and $\left\{Y_t : t \in T\right\}$ be two stochastic processes such that $X_t \leq_{\textnormal{ast}} Y_t$, as $t \to \infty$. Then $a+bX_t \leq_{\textnormal{ast}} a+bY_t$ \textnormal{(}resp. $a+bY_t \leq_{\textnormal{ast}} a+bX_t$\textnormal{)}, as $t \to \infty$, for every $a \in \mathbb{R}$ and $b \geq 0$ \textnormal{(}resp. $b \leq 0$\textnormal{)}. \hfill $\blacksquare$
\end{corollary}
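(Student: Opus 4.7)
The plan is to specialize the transformation results that were just proved to the affine map $\psi:\mathbb{R} \to \mathbb{R}$ defined by $\psi(x) = a + bx$. Regardless of the values of $a$ and $b$, this $\psi$ is continuous on $\mathbb{R}$, so the hypotheses of \cref{1-theorem-ast-increasing-transformations} and \cref{1-corollary-ast-decreasing-transformations} will be available as soon as we check monotonicity.

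For the case $b \geq 0$, I would note that $\psi$ is nondecreasing (strictly increasing if $b > 0$, constant if $b = 0$, in which case the claim reduces to $a =_{\textnormal{ast}} a$, which is trivial). Applying \cref{1-theorem-ast-increasing-transformations} to the hypothesis $X_t \leq_{\textnormal{ast}} Y_t$ with this $\psi$ immediately gives $\psi(X_t) \leq_{\textnormal{ast}} \psi(Y_t)$, as $t \to \infty$, i.e.\ $a + bX_t \leq_{\textnormal{ast}} a + bY_t$, which is the first conclusion.

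For the case $b \leq 0$, the same $\psi$ is now nonincreasing and still continuous, so \cref{1-corollary-ast-decreasing-transformations} is the applicable result and directly yields $\psi(Y_t) \leq_{\textnormal{ast}} \psi(X_t)$, as $t \to \infty$, i.e.\ $a + bY_t \leq_{\textnormal{ast}} a + bX_t$. There is no substantive obstacle: the corollary is only the affine specialization of the already-established monotone transformation results, and the sole thing one must verify is the (trivial) monotonicity of $x \mapsto a + bx$ in each sign regime of $b$.
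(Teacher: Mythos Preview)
Your proposal is correct and is exactly the argument the paper has in mind: the corollary is stated without proof (marked with $\blacksquare$) immediately after \cref{1-theorem-ast-increasing-transformations} and \cref{1-corollary-ast-decreasing-transformations}, and is simply the specialization of those results to the affine map $\psi(x)=a+bx$.
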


The next theorem shows an intuitive connection between asymptotic usual stochastic order and convergence in probability of stochastic processes to certain subsets of $\mathbb{R}$.

\begin{theorem}\label{1-theorem-ast-convergence-in probability}
	Let $\left\{X_t : t \in T\right\}$ and $\left\{Y_t : t \in T\right\}$ be two stochastic processes such that $P_{X_t}(A) \to 1$, as $t \to \infty$ and $P_{Y_t}(B) \to 1$, as $t \to \infty$, where $A$ and $B$ are two subsets of $\mathbb{R}$ with $\sup{A}<\inf{B}$. Then $X_t \leq_{\textnormal{ast}} Y_t$, as $t \to \infty$.
\end{theorem}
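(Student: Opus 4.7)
My plan is to reduce the measure estimate on quantile functions to a one-point comparison of the cdfs at a fixed separator $c$, and then use the hypothesis on $P_{X_t}(A)$ and $P_{Y_t}(B)$ to control each side.

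First, since $\sup A < \inf B$, I would choose any $c \in \mathbb{R}$ with $\sup A < c < \inf B$. The hypothesis implies $A \subseteq (-\infty, c]$ and $B \subseteq (c,\infty)$, so
\[
F_{X_t}(c) = P_{X_t}\!\left((-\infty,c]\right) \geq P_{X_t}(A) \longrightarrow 1 \quad \text{and} \quad 1-F_{Y_t}(c) = P_{Y_t}\!\left((c,\infty)\right) \geq P_{Y_t}(B) \longrightarrow 1,
\]
so $F_{X_t}(c) \to 1$ and $F_{Y_t}(c) \to 0$ as $t \to \infty$.

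Next, I would invoke the standard identity for the left-continuous inverse, namely $F^{\leftarrow}(u) \leq x \iff u \leq F(x)$, equivalently $F^{\leftarrow}(u) > x \iff u > F(x)$. Using this I claim that whenever $F_{X_t}^{\leftarrow}(u) > F_{Y_t}^{\leftarrow}(u)$, at least one of the alternatives $F_{X_t}^{\leftarrow}(u) > c$ or $F_{Y_t}^{\leftarrow}(u) < c$ must hold (otherwise $F_{Y_t}^{\leftarrow}(u) \geq c \geq F_{X_t}^{\leftarrow}(u)$, a contradiction). The first alternative gives $u > F_{X_t}(c)$ directly. The second, $F_{Y_t}^{\leftarrow}(u) < c$, implies there exists $c' < c$ with $F_{Y_t}^{\leftarrow}(u) \leq c'$, hence $u \leq F_{Y_t}(c') \leq F_{Y_t}(c)$. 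Consequently
\[
\left\{u \in (0,1) : F_{X_t}^{\leftarrow}(u) > F_{Y_t}^{\leftarrow}(u)\right\} \subseteq \bigl(F_{X_t}(c), 1\bigr) \cup \bigl(0, F_{Y_t}(c)\bigr].
\]

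Taking Lebesgue measure, the left-hand side is bounded above by $(1 - F_{X_t}(c)) + F_{Y_t}(c)$, and by the first step this upper bound tends to $0$ as $t \to \infty$. By \cref{1-definition-asymptotic-usual-stochastic-order} this yields $X_t \leq_{\textnormal{ast}} Y_t$ as $t \to \infty$. The only subtle step is the derivation of the set inclusion, where one must handle the strict vs.\ non-strict inequalities for $F^{\leftarrow}$ carefully; everything else is routine. No continuity assumption on the $F_{X_t}$ or $F_{Y_t}$ is needed.
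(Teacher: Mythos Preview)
Your proof is correct and follows essentially the same approach as the paper: both exploit the concentration hypotheses to show that, outside a set of vanishing Lebesgue measure, the quantile functions satisfy $F_{X_t}^{\leftarrow}(u) \leq F_{Y_t}^{\leftarrow}(u)$. Your execution is slightly more streamlined, using a single separator $c$ together with the Galois identity $F^{\leftarrow}(u) \leq x \iff u \leq F(x)$, whereas the paper works with a $\delta$-band around $\sup A$ and $\inf B$, mixes left- and right-continuous inverses, and invokes \cref{1-lemma-inverse-consistency} at the end.
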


\begin{proof}
	Let $0<\epsilon<1$ and $0<\delta<(\inf{B}-\sup{A})/2$. By the hypothesis, there exists $T_1, T_2 \in T$ such that $P_{X_t}(A)>1-\epsilon/2$, whenever $t \geq T_1$ and $P_{Y_t}(B)>1-\epsilon/2$, whenever $t \geq T_2$. Let us choose $t \geq \max{\{T_1,T_2\}}$ and $u \in [\epsilon/2,1-\epsilon/2]$. Then $\sup{A}+\delta \geq F_{X_t}^{\rightarrow}(u)$ and $\inf{B}-\delta<F_{Y_t}^{\leftarrow}(u)$. By choice of $\delta$, we have $\sup{A}+\delta<\inf{B}-\delta$. Hence, $F_{X_t}^{\rightarrow}(u)<F_{Y_t}^{\leftarrow}(u)$. Since this holds for every $u \in [\epsilon/2,1-\epsilon/2]$, we have $\mu(\{u \in (0,1) : F_{X_t}^{\rightarrow}(u)<F_{Y_t}^{\leftarrow}(u)\}) \geq 1-\epsilon$. Thus,
	\begin{align*}
	\mu(\{u \in (0,1) : F_{X_t}^{\rightarrow}(u)>F_{Y_t}^{\leftarrow}(u)\}) \leq 1-\mu(\{u \in (0,1) : F_{X_t}^{\rightarrow}(u)<F_{Y_t}^{\leftarrow}(u)\}) \leq \epsilon.
	\end{align*}
	Since $\epsilon \in (0,1)$ can be arbitrarily small, the proof follows from \cref{1-lemma-inverse-consistency}.
\end{proof}

\section{Asymptotic stochastic precedence order}\label{1-section-asp}

In the previous sections, we have explored asymptotic stochastic orders based on stochastic orders that do not consider any dependence structure between $X_t$ and $Y_t$ into account. In this section, we discuss an alternative approach based on stochastic precedence order, that considers any possible dependence structure between the two stochastic processes into account.

\begin{definition}\label{1-definition-asymptotic-stochastic-precedence-order}
	For two stochastic processes $\left\{X_t : t \in T\right\}$ and $\left\{Y_t : t \in T\right\}$, we say that $X_t$ is smaller than $Y_t$ in asymptotic stochastic precedence order, denoted by $X_t \leq_{\textnormal{asp}} Y_t$, as $t \to \infty$ if
	\begin{equation}\label{1-eq-definition-asymptotic-stochastic-precedence-order}
	\lim_{t \to \infty} P\{X_t \leq Y_t\} \geq 1/2.
	\end{equation}
	We say that $X_t$ is equal to $Y_t$ in asymptotic stochastic precedence order and denote it by $X_t =_{\textnormal{asp}} Y_t$, as $t \to \infty$ if $X_t \leq_{\textnormal{asp}} Y_t$, as $t \to \infty$ and $Y_t \leq_{\textnormal{asp}} X_t$, as $t \to \infty$.
\end{definition}

Note that, $X_t =_{\textnormal{asp}} Y_t$, as $t \to \infty$ if and only if $\lim_{t \to \infty} P\{X_t \leq Y_t\} = 1/2$. The next theorem shows that, for any two independent stochastic processes, asymptotic usual stochastic order is stronger than asymptotic stochastic precedence order. We need the next lemma to prove it.

\begin{lemma}\label{1-lemma-ast-asp}
	Let $F$ and $G$ be two cdfs. Then
	\begin{equation}\label{1-eq-lemma-ast-asp}
		\left\{G(x) : F(x)<G(x)\right\} \subseteq \left\{u \in (0,1) : F^{\leftarrow}(u)>G^{\leftarrow}(u)\right\}.
	\end{equation}
	Furthermore, if $G$ is continuous, then the two sets in \eqref{1-eq-lemma-ast-asp} become equal.
\end{lemma}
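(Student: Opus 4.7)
The plan is to work directly from the definitions of $F^{\leftarrow}$ and $G^{\leftarrow}$, exploiting the standard duality
\[
F^{\leftarrow}(u) > x \iff F(x) < u,
\]
which is a consequence of the right-continuity of any cdf together with the definition of the left-continuous inverse. With this tool in hand, both containments reduce to short manipulations.

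For the forward inclusion, I would pick an arbitrary element of the left-hand set, i.e. take $x \in \mathbb{R}$ with $F(x) < G(x)$, and set $u := G(x)$. Since $u > F(x) \geq 0$, $u$ is positive; a brief check is needed to confirm $u \in (0,1)$ (the only delicate case is when $G(x) = 1$, which can be excluded by replacing $x$ if necessary, or handled separately because $F(x) < 1$ combined with right-continuity of $F$ gives $F(y) < 1$ on a small right neighbourhood of $x$). From $G(x) \geq u$ it follows immediately that $G^{\leftarrow}(u) \leq x$. From $F(x) < u$, the duality above yields $F^{\leftarrow}(u) > x$. Chaining these gives $F^{\leftarrow}(u) > x \geq G^{\leftarrow}(u)$, so $u$ lies in the right-hand set.

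For the equality under the extra assumption that $G$ is continuous, I would start with $u \in (0,1)$ satisfying $F^{\leftarrow}(u) > G^{\leftarrow}(u)$, set $x := G^{\leftarrow}(u)$, and first establish that $G(x) = u$. This uses both left- and right-continuity of $G$: right-continuity together with $G(z) \geq u$ for $z \downarrow x$ gives $G(x) \geq u$, while $G(z) < u$ for $z < x$ combined with (left-)continuity at $x$ gives $G(x) \leq u$. Once $G(x) = u$ is in hand, the strict inequality $x < F^{\leftarrow}(u)$ yields $F(x) < u$ by the same duality (used in the other direction), so $u = G(x)$ with $F(x) < G(x)$, exhibiting $u$ as an element of the left-hand set.

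I do not foresee any serious obstacle: the argument is essentially a careful bookkeeping with inverses. The one subtle point worth emphasising is precisely where continuity of $G$ is used, namely in recovering the identity $G(G^{\leftarrow}(u)) = u$ in the reverse inclusion; if $G$ has a jump across $u$ then $G(G^{\leftarrow}(u))$ can strictly exceed $u$ and the element $u$ of the right-hand set need not be a value of $G$ at all, which is exactly the phenomenon illustrated by the remark following \cref{1-lemma-no-inconsistency}. No continuity of $F$ is needed anywhere, because we only use that $F$ is right-continuous (automatic for cdfs) and non-decreasing.
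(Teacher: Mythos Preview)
Your approach is essentially the same as the paper's: both proofs use the duality $F^{\leftarrow}(u)>x \iff F(x)<u$ together with $G^{\leftarrow}(G(x))\le x$ for the forward inclusion, and $G(G^{\leftarrow}(u))=u$ (from continuity of $G$) combined with the same duality for the reverse inclusion. The paper is terser and, like you, does not fully resolve the boundary case $G(x)=1$; your citation should point to the remark following \cref{1-lemma-ast-asp} rather than \cref{1-lemma-no-inconsistency}.
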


\ni{\it Remark.} To demonstrate that the reverse direction of \eqref{1-eq-lemma-ast-asp} does not hold in general, we consider $F$ to be the cdf of standard normal distribution and $G$ to be the cdf of the degenerate distribution on $0$. Then $\{G(x) : F(x)<G(x)\}=G([0,\infty))=\{1\}$ and $\{u \in (0,1) : F^{\leftarrow}(u)>G^{\leftarrow}(u)\}=(1/2,1]$.

\begin{theorem}\label{1-theorem-ast-asp}
	Let $\left\{X_t : t \in T\right\}$ and $\left\{Y_t : t \in T\right\}$ be two independent stochastic processes. If $X_t \leq_{\textnormal{ast}} Y_t$, as $t \to \infty$, then $X_t \leq_{\textnormal{asp}} Y_t$, as $t \to \infty$.
\end{theorem}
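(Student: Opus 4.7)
The plan is to pass to the Skorokhod (quantile) representation and to exploit monotonicity of the quantile function over the ``upper triangle''. Let $U,V$ be independent uniform random variables on $(0,1)$. Since $X_t$ and $Y_t$ are independent, the pair $\bigl(X_t,Y_t\bigr)$ has the same joint distribution as $\bigl(F_{X_t}^{\leftarrow}(U),F_{Y_t}^{\leftarrow}(V)\bigr)$, and hence
\[
P\{X_t\leq Y_t\}=\int_{0}^{1}\!\!\int_{0}^{1}\mathbf{1}\bigl\{F_{X_t}^{\leftarrow}(u)\leq F_{Y_t}^{\leftarrow}(v)\bigr\}\,du\,dv.
\]

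Set $C_t=\{u\in(0,1):F_{X_t}^{\leftarrow}(u)\leq F_{Y_t}^{\leftarrow}(u)\}$. The hypothesis $X_t\leq_{\textnormal{ast}} Y_t$, combined with \cref{1-lemma-inverse-consistency}, gives $\mu(C_t^{c})\to 0$ as $t\to\infty$. The key observation is that the quantile inequality on the diagonal can be spread over the upper triangle: for every $u\in C_t$ and every $v\geq u$, monotonicity of $F_{Y_t}^{\leftarrow}$ yields
\[
F_{Y_t}^{\leftarrow}(v)\geq F_{Y_t}^{\leftarrow}(u)\geq F_{X_t}^{\leftarrow}(u),
\]
so the indicator in the double integral is $1$ at every such $(u,v)$.

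Restricting the integral to $\{(u,v):u\in C_t,\ v\geq u\}$ gives
\[
P\{X_t\leq Y_t\}\geq\int_{C_t}(1-u)\,du\geq\int_{0}^{1}(1-u)\,du-\mu(C_t^{c})=\tfrac{1}{2}-\mu(C_t^{c}),
\]
and letting $t\to\infty$ produces the bound $\liminf_{t\to\infty}P\{X_t\leq Y_t\}\geq 1/2$ required by \cref{1-definition-asymptotic-stochastic-precedence-order}.

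The argument is short and there is no real obstacle. The only subtle point --- and the reason independence is needed --- is that the asymptotic quantile dominance lives on the diagonal $\{v=u\}$, a set of product Lebesgue measure zero; one must therefore widen it to the upper-triangular region, which is precisely what monotonicity of $F_{Y_t}^{\leftarrow}$ permits, after which the independent product measure supplies the weight $1-u$ at each $u\in C_t$ and delivers the constant $1/2$.
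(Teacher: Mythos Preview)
Your proof is correct and reaches exactly the same terminal inequality as the paper, namely $P\{X_t\leq Y_t\}\geq \tfrac12-\mu\bigl(\{u:F_{X_t}^{\leftarrow}(u)>F_{Y_t}^{\leftarrow}(u)\}\bigr)$, but the route is genuinely different. The paper works on the distribution-function side: it writes $P\{X_t\leq Y_t\}=\int F_{X_t}\,dF_{Y_t}$, splits the integrand around $\tfrac12$ over the sets $P_{0,t}$ and $P_{0,t}^c$, and then invokes \cref{1-lemma-ast-asp} to pass from the image set $\{F_{Y_t}(x):F_{X_t}(x)<F_{Y_t}(x)\}$ to the quantile set appearing in the definition of $\leq_{\textnormal{ast}}$. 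You instead work entirely on the quantile side via the Skorokhod coupling $(F_{X_t}^{\leftarrow}(U),F_{Y_t}^{\leftarrow}(V))$ and use nothing beyond monotonicity of $F_{Y_t}^{\leftarrow}$ to convert the diagonal event into an upper-triangular region of measure $1-u$. Your argument is more self-contained --- it dispenses with \cref{1-lemma-ast-asp} altogether (and your citation of \cref{1-lemma-inverse-consistency} is not actually needed either, since your $C_t^c$ is already the exact set in \cref{1-definition-asymptotic-usual-stochastic-order}). The paper's approach, on the other hand, makes the connection to the cdf-level set $P_{0,t}$ explicit, which ties in with the alternative characterizations of $\leq_{\textnormal{ast}}$ given in \cref{1-theorem-ast-inverse}.
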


\begin{proof}
	Let $t \in T$ and define $P_{0,t}=\left\{x \in \mathbb{R} : F_{X_t}(x)<F_{Y_t}(x)\right\}$. Then, by independence of $X_t$ and $Y_t$, we have
	\begin{align}\label{1-proof-theorem-ast-asp}
	P\left\{X_t \leq Y_t\right\}&=\int_{-\infty}^{\infty} F_{X_t}(x)\,dF_{Y_t}(x)\nonumber\\
	&=\frac{1}{2}+\int_{P_{0,t}^c} \left(F_{X_t}(x)-F_{Y_t}(x)\right)\,dF_{Y_t}(x)-\int_{P_{0,t}} \left(F_{Y_t}(x)-F_{X_t}(x)\right)\,dF_{Y_t}(x)\nonumber\\
	&\geq \frac{1}{2}-\mu\left(\left\{F_{Y_t}(x) : F_{X_t}(x)<F_{Y_t}(x)\right\}\right)\nonumber\\
	&\geq \frac{1}{2}-\mu\left(\left\{u \in (0,1) : F_{X_t}^{\leftarrow}(u)>F_{Y_t}^{\leftarrow}(u)\right\}\right),
	\end{align}
	where the last inequality is due to \cref{1-lemma-ast-asp}. The proof then follows from the hypothesis.
\end{proof}

The next theorem shows that asymptotic stochastic precedence order of the stochastic processes is closed under nondecreasing transformations.

\begin{theorem}\label{1-theorem-asp-increasing-transformations}
	Let $\left\{X_t : t \in T\right\}$ and $\left\{Y_t : t \in T\right\}$ be two stochastic processes such that $X_t \leq_{\textnormal{asp}} Y_t$, as $t \to \infty$ and let $\psi:\mathbb{R} \to \mathbb{R}$ be a nondecreasing function. Then $\psi(X_t) \leq_{\textnormal{asp}} \psi(Y_t)$, as $t \to \infty$. Furthermore, if $\psi$ is strictly increasing, then $X_t \leq_{\textnormal{asp}} Y_t$, as $t \to \infty$ if and only if $\psi(X_t) \leq_{\textnormal{asp}} \psi(Y_t)$, as $t \to \infty$.
\end{theorem}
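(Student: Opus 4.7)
The plan is to exploit the fact that a nondecreasing function preserves the event $\{X_t \leq Y_t\}$, converting the claim into a comparison of probabilities that we can pass to the limit.

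First I would fix $t \in T$ and observe that since $\psi$ is nondecreasing, the implication $X_t(\omega) \leq Y_t(\omega) \Rightarrow \psi(X_t(\omega)) \leq \psi(Y_t(\omega))$ holds pointwise. This yields the set inclusion $\{X_t \leq Y_t\} \subseteq \{\psi(X_t) \leq \psi(Y_t)\}$, and therefore the probability inequality
\[
P\{X_t \leq Y_t\} \leq P\{\psi(X_t) \leq \psi(Y_t)\}.
\]
Taking $t \to \infty$ on both sides and invoking the hypothesis $X_t \leq_{\textnormal{asp}} Y_t$, as $t \to \infty$, which by \cref{1-definition-asymptotic-stochastic-precedence-order} gives $\lim_{t \to \infty} P\{X_t \leq Y_t\} \geq 1/2$, I would conclude that $\lim_{t \to \infty} P\{\psi(X_t) \leq \psi(Y_t)\} \geq 1/2$, i.e., $\psi(X_t) \leq_{\textnormal{asp}} \psi(Y_t)$, as $t \to \infty$.

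For the second (``furthermore'') part, if $\psi$ is strictly increasing, then $\psi$ is injective and order-preserving, so the converse implication $\psi(X_t(\omega)) \leq \psi(Y_t(\omega)) \Rightarrow X_t(\omega) \leq Y_t(\omega)$ also holds. Hence the two events coincide and
\[
P\{X_t \leq Y_t\} = P\{\psi(X_t) \leq \psi(Y_t)\}
\]
for every $t \in T$. Taking limits on both sides immediately gives the equivalence between $X_t \leq_{\textnormal{asp}} Y_t$, as $t \to \infty$, and $\psi(X_t) \leq_{\textnormal{asp}} \psi(Y_t)$, as $t \to \infty$.

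There is no serious obstacle here; the argument is essentially a one-line monotonicity observation combined with the definition of asymptotic stochastic precedence order. The only mild subtlety worth noting is that, unlike \cref{1-theorem-ast-increasing-transformations}, no continuity assumption on $\psi$ is required because the stochastic precedence order is defined directly through the probability of the event $\{X_t \leq Y_t\}$ rather than through quantile functions, and monotone set inclusions do not require any regularity of $\psi$.
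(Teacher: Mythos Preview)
Your proposal is correct and matches the paper's proof essentially line for line: both use the pointwise implication $X_t \leq Y_t \Rightarrow \psi(X_t) \leq \psi(Y_t)$ from monotonicity to obtain $P\{\psi(X_t) \leq \psi(Y_t)\} \geq P\{X_t \leq Y_t\}$, pass to the limit, and in the strictly increasing case observe that the two events coincide so the probabilities are equal.
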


\begin{proof}
	By nondecreasingness of $\psi$, we have, for every $t \in T$, $X_t \leq Y_t$ implies $\psi(X_t) \leq \psi(Y_t)$ and hence $P\left\{\psi(X_t) \leq \psi(Y_t)\right\} \geq P\left\{X_t \leq Y_t\right\}$. Taking limit as $t \to \infty$ and using the hypothesis that $X_t \leq_{\textnormal{asp}} Y_t$, as $t \to \infty$, we have $\psi(X_t) \leq_{\textnormal{asp}} \psi(Y_t)$, as $t \to \infty$. For the second part, observe that if $\psi$ is strictly increasing, then for every $t \in T$, we have $P\left\{\psi(X_t) \leq \psi(Y_t)\right\}=P\left\{X_t \leq Y_t\right\}$. The proof follows by taking limit as $t \to \infty$.
\end{proof}

Observing that $-\psi$ is nondecreasing (resp. strictly increasing) if and only if $\psi$ is nonincreasing (resp. strictly decreasing), we immediately have the following corollary.

\begin{corollary}\label{1-corollary-asp-decreasing-transformations}
	Let $\left\{X_t : t \in T\right\}$ and $\left\{Y_t : t \in T\right\}$ be two stochastic processes such that $X_t \leq_{\textnormal{asp}} Y_t$, as $t \to \infty$ and let $\psi:\mathbb{R} \to \mathbb{R}$ be a nonincreasing function. Then $\psi(Y_t) \leq_{\textnormal{asp}} \psi(X_t)$, as $t \to \infty$. Furthermore, if $\psi$ is strictly decreasing, then $X_t \leq_{\textnormal{asp}} Y_t$, as $t \to \infty$ if and only if $\psi(Y_t) \leq_{\textnormal{asp}} \psi(X_t)$, as $t \to \infty$. \hfill $\blacksquare$
\end{corollary}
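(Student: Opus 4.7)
My plan is to reduce the corollary to \cref{1-theorem-asp-increasing-transformations} by composing $\psi$ with negation, as the remark preceding the statement already suggests. Set $\tilde{\psi} := -\psi$. If $\psi$ is nonincreasing then $\tilde{\psi}$ is nondecreasing, and if $\psi$ is strictly decreasing then $\tilde{\psi}$ is strictly increasing. Hence \cref{1-theorem-asp-increasing-transformations}, applied to $\tilde{\psi}$, yields $\tilde{\psi}(X_t) \leq_{\textnormal{asp}} \tilde{\psi}(Y_t)$ as $t \to \infty$ (with the equivalence $X_t \leq_{\textnormal{asp}} Y_t \Leftrightarrow \tilde{\psi}(X_t) \leq_{\textnormal{asp}} \tilde{\psi}(Y_t)$ in the strictly decreasing case).

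The second, routine step is to translate this back in terms of $\psi$. For every outcome, $-\psi(X_t) \leq -\psi(Y_t)$ is equivalent to $\psi(Y_t) \leq \psi(X_t)$, so the two events coincide and
\[
P\{-\psi(X_t) \leq -\psi(Y_t)\} = P\{\psi(Y_t) \leq \psi(X_t)\}
\]
for every $t \in T$. Passing to the limit as $t \to \infty$, the condition $\lim_{t \to \infty} P\{-\psi(X_t) \leq -\psi(Y_t)\} \geq 1/2$ furnished by \cref{1-theorem-asp-increasing-transformations} is exactly $\psi(Y_t) \leq_{\textnormal{asp}} \psi(X_t)$, as $t \to \infty$. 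In the strictly decreasing case the pointwise equality of probabilities propagates both directions, giving the claimed \emph{iff}.

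There is essentially no genuine obstacle here, since \cref{1-theorem-asp-increasing-transformations} already does the analytic work and the reflection $\psi \mapsto -\psi$ is purely algebraic; one just needs to note, once, that unlike the $\leq_{\textnormal{ast}}$ case (which required \cref{1-lemma-ast-negation-2} to handle negation at the level of distribution functions), for $\leq_{\textnormal{asp}}$ the negation argument is trivial because it operates on the single event $\{X_t \leq Y_t\}$ rather than on quantile inverses.
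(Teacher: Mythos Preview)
Your proposal is correct and follows exactly the paper's own route: the paper simply observes that $-\psi$ is nondecreasing (resp.\ strictly increasing) when $\psi$ is nonincreasing (resp.\ strictly decreasing) and invokes \cref{1-theorem-asp-increasing-transformations}. Your extra remark contrasting this with the $\leq_{\textnormal{ast}}$ case (where \cref{1-lemma-ast-negation-2} was needed) is a nice clarification but not part of the paper's argument.
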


It follows from \cref{1-theorem-ast-asp} that under the setup of \cref{1-theorem-ast-convergence-in probability}, we have $X_t \leq_{\textnormal{asp}} Y_t$, as $t \to \infty$. The next theorem gives a stronger conclusion.

\begin{theorem}\label{1-theorem-asp-convergence-in probability}
	Let $\left\{X_t : t \in T\right\}$ and $\left\{Y_t : t \in T\right\}$ be two independent stochastic processes such that $P_{X_t}(A) \to 1$, as $t \to \infty$ and $P_{Y_t}(B) \to 1$, as $t \to \infty$, where $A$ and $B$ are two subsets of $\mathbb{R}$ with $\sup{A}<\inf{B}$. Then $\lim_{t \to \infty} P\{X_t \leq Y_t\}=1$.
\end{theorem}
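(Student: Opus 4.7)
The plan is to leverage independence to split the event $\{X_t \in A\} \cap \{Y_t \in B\}$ as a product and then observe that this event is contained in $\{X_t \leq Y_t\}$. Set $c = \sup A$ and $d = \inf B$, so by hypothesis $c < d$. For every $\omega$ with $X_t(\omega) \in A$ and $Y_t(\omega) \in B$, the definitions of supremum and infimum give $X_t(\omega) \leq c < d \leq Y_t(\omega)$, and in particular $X_t(\omega) \leq Y_t(\omega)$.

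Consequently, writing $E_t = \{X_t \in A\} \cap \{Y_t \in B\}$, we have $E_t \subseteq \{X_t \leq Y_t\}$, so
\[
P\{X_t \leq Y_t\} \;\geq\; P(E_t) \;=\; P_{X_t}(A)\, P_{Y_t}(B),
\]
where the last equality uses the independence of $X_t$ and $Y_t$. By hypothesis, each factor on the right tends to $1$ as $t \to \infty$, so $\liminf_{t \to \infty} P\{X_t \leq Y_t\} \geq 1$. Combined with the trivial bound $P\{X_t \leq Y_t\} \leq 1$, this yields $\lim_{t \to \infty} P\{X_t \leq Y_t\} = 1$, as required.

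There is essentially no obstacle here: the proof is a one-line consequence of independence together with the order separation between $\sup A$ and $\inf B$. The only minor point worth noting is that neither $\sup A \in A$ nor $\inf B \in B$ is required, since the defining inequalities $X_t(\omega) \leq \sup A$ and $Y_t(\omega) \geq \inf B$ on $E_t$ hold from the definitions of sup and inf alone.
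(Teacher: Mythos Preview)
Your proof is correct and is genuinely different from the paper's argument. The paper uses independence to write $P\{X_t \leq Y_t\}=\int_{-\infty}^{\infty} F_{X_t}(x)\,dF_{Y_t}(x)$, then restricts the integral to a $\delta$-neighborhood $[\inf B-\delta,\sup B+\delta]$ of $B$ and bounds $F_{X_t}$ from below by $1-\epsilon$ there (using that $P_{X_t}(A)$ is close to $1$ forces the mass of $X_t$ to sit below $\sup A+\delta<\inf B-\delta$), obtaining $P\{X_t \leq Y_t\}\geq(1-\epsilon)(1-2\epsilon)$. Your route bypasses the integral representation entirely: the containment $\{X_t\in A,\,Y_t\in B\}\subseteq\{X_t\leq Y_t\}$ together with the product rule gives $P\{X_t\leq Y_t\}\geq P_{X_t}(A)\,P_{Y_t}(B)\to 1$ in one line. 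Your argument is more elementary and avoids the $\epsilon$--$\delta$ bookkeeping; the paper's approach, on the other hand, fits the analytic style used elsewhere in the section (e.g., the proof of \cref{1-theorem-ast-asp}) where the integral formula is the natural tool. The only implicit assumption you rely on is that $A$ and $B$ are measurable, but this is already built into the hypothesis that $P_{X_t}(A)$ and $P_{Y_t}(B)$ are defined.
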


\begin{proof}
	Let $\epsilon \in (0,1)$ and $\delta \in (0,(\inf{B}-\sup{A})/2)$. By assumption, there exists $T_1 \in T$ such that for every $t \geq T_1$, $F_{X_t}(x)<\epsilon$ if $x<\inf{A}-\delta$ and $F_{X_t}(x)>1-\epsilon$ if $x>\sup{A}+\delta$. Again, there exists $T_2 \in T$ such that for every $t \geq T_2$, $F_{Y_t}(x)<\epsilon$ if $x<\inf{B}-\delta$ and $F_{Y_t}(x)>1-\epsilon$ if $x>\sup{B}+\delta$. Let us choose $t \geq \max{\{T_1,T_2\}}$. Then
	\[
	P\{X_t \leq Y_t\}=\int_{-\infty}^{\infty} F_{X_t}(x)\,dF_{Y_t}(x) \geq \int_{\inf{B}-\delta}^{\sup{B}+\delta} F_{X_t}(x)\,dF_{Y_t}(x) \geq (1-\epsilon) \int_{\inf{B}-\delta}^{\sup{B}+\delta} dF_{Y_t}(x).
	\]
	Since, $F_{Y_t}(\inf{B}-\delta) \leq \epsilon$ and $F_{Y_t}(\sup{B}+\delta) \geq 1-\epsilon$, we have $P\{X_t \leq Y_t\} \geq (1-\epsilon)(1-2\epsilon) > 1-3\epsilon$. Since $\epsilon$ can be arbitrarily small, it follows that $\lim_{t \to \infty} P\{X_t \leq Y_t\}=1$.
\end{proof}

\section{Motivation for quantitative approaches}\label{1-section-motivation-quantitative}

As defined in \cref{1-section-ast}, $X_t \leq_{\textnormal{ast}} Y_t$, as $t \to \infty$ if $\lim_{t \to \infty} \mu(\{u \in (0,1) : F_{X_t}^{\leftarrow}(u) > F_{Y_t}^{\leftarrow}(u)\})=0$. A limitation of this condition is that, the measure $\mu(\{u \in (0,1) : F_{X_t}^{\leftarrow}(u) > F_{Y_t}^{\leftarrow}(u)\})$ considers only the sign of $F_{X_t}^{\leftarrow}(u)-F_{Y_t}^{\leftarrow}(u)$ and does not take into account the magnitude of the same. For that reason, we say that the definition is {\it semi-quantitative} in nature. As a result, $X_t \leq_{\textnormal{ast}} Y_t$, as $t \to \infty$ does not hold in a lot of situations where asymptotic stochastic order should hold from an intuitive point of view, as demonstrated in the following example.

\begin{example}\label{1-example-ast-L-1}
	Let us consider the stochastic processes $\{X_t : t \geq 0\}$ and $\{Y_t : t \geq 0\}$ with corresponding cdfs (of $X_t$ and $Y_t$)
	\begin{flalign*}
		&\hspace*{3.5cm} F_{X_t}(x)=
		\begin{cases*}
			\Phi\left(x\right), & if  $x<0$,\\
			\frac{1}{2}+\left(\Phi\left(1\right)-\frac{1}{2}-\frac{1}{t+4}\right)x, & if $0 \leq x<1$,\\
			1-e^{1-x}\left(1-\Phi\left(x\right)\right), & if  $x \geq 1$,
		\end{cases*}\\
		&\noindent \textnormal{and}&\\
		&\hspace*{3.5cm} F_{Y_t}(x)=
		\begin{cases*}
			e^x\,\Phi\left(x\right), & if  $x<0$,\\
			\frac{1}{2}+\left(\Phi\left(1\right)-\frac{1}{2}\right)x, & if $0 \leq x<1$,\\
			\Phi\left(x\right), & if  $x \geq 1$,
		\end{cases*}
	\end{flalign*}
	for every $t \geq 0$, where $\Phi$ denotes the cdf of the standard normal distribution. Let $A_{0,t}=\{u \in (0,1) : F_{X_t}^{\leftarrow}(u) > F_{Y_t}^{\leftarrow}(u)\}$. Then it is easy to see that $\lim_{t \to \infty} \mu(A_{0,t})=\Phi(1)-0.5>0$. Thus, asymptotic usual stochastic order does not hold. However, for every $t \geq 0$, we have $F_{X_t}^{\leftarrow}(u) \leq F_{Y_t}^{\leftarrow}(u)$, whenever $u \notin A_{0,t}$ and $\lim_{t \to \infty} F_{X_t}^{\leftarrow}(u)=\lim_{t \to \infty} F_{Y_t}^{\leftarrow}(u)$, whenever $u \in A_{0,t}$, i.e. $F_{X_t}^{\leftarrow}(u)$ and $F_{Y_t}^{\leftarrow}(u)$ are arbitrarily close, if $u \in A_{0,t}$, when $t$ is large enough. This suggests that the usual stochastic order should hold in some asymptotic sense, but \cref{1-definition-asymptotic-usual-stochastic-order} does not reflect that intuition. \hfill $\blacksquare$
\end{example}

Also, $X_t \leq_{\textnormal{ast}} Y_t$, as $t \to \infty$ does not imply $\lim_{t \to \infty} E\left(X_t\right) \leq \lim_{t \to \infty} E\left(Y_t\right)$, when the limits exist. See the remark following \cref{1-theorem-ast-expectation-inequality-L-1}, on construction of examples where $X_t \leq_{\textnormal{ast}} Y_t$, as $t \to \infty$ and $\lim_{t \to \infty} E\left(X_t\right)>\lim_{t \to \infty} E\left(Y_t\right)$.\vs

To overcome these issues, it is clear that one has to incorporate not only the region where $X \leq_{\textnormal{st}} Y$ is violated, but also the extent of that violation, in the definition of asymptotic usual stochastic order. Let $\Vert \cdot \Vert_{L_1}$ be the $L_1$ norm with respect to the Lebesgue measure on $\mathbb{R}$. Now, we can partition the $L_1$ distance between two cdfs $F_X$ and $F_Y$ in the following way.
\begin{equation}\label{1-eq-decomposition-L-1}
	\Vert F_Y-F_X \Vert_{L_1}=\int_{P_0} \left\vert F_X(x)-F_Y(x) \right\vert dx+\int_{P_1} \left\vert F_X(x)-F_Y(x) \right\vert dx,
\end{equation}
where $P_0=\{x \in \mathbb{R} : F_X(x)<F_Y(x)\}$ and $P_1=\{x \in \mathbb{R} : F_X(x)>F_Y(x)\}$. Note that the first term in the right hand side vanishes if $X \leq_{\textnormal{st}} Y$ and reaches the maximum $\Vert F_Y-F_X \Vert_{L_1}$ if $Y \geq_{\textnormal{st}} X$. Similarly, the second term in the right hand side matches $\Vert F_Y-F_X \Vert_{L_1}$ if $X \leq_{\textnormal{st}} Y$ and vanishes if $Y \geq_{\textnormal{st}} X$. Hence, roughly speaking, the first term contributes against $X \leq_{\textnormal{st}} Y$ and the second term contributes towards $X \leq_{\textnormal{st}} Y$.
Let $\Vert F_Y-F_X \Vert_{\mathcal{W}_p}$ be the $L_p$-Wasserstein distance (also known as $\mathcal{W}_p$ distance) between $F_X$ and $F_Y$. This distance has the following representation in terms of the corresponding quantile functions $F_X^{\leftarrow}$ and $F_Y^{\leftarrow}$.
\begin{equation*}
	\Vert F_Y-F_X \Vert_{\mathcal{W}_p}=\left(\int_0^1 (F_X^{\leftarrow}(u)-F_Y^{\leftarrow}(u))^p du\right)^{1/p}.
\end{equation*}
See \citet{PZ_2019} for a detailed review on the Wasserstein distances and their statistical aspects. It is well-known that if $F_X$ and $F_Y$ both have finite $p$th order moment, then $\Vert F_Y-F_X \Vert_{\mathcal{W}_p}$ is finite. Now, consider the following partition of $\Vert F_Y-F_X \Vert_{\mathcal{W}_2}^2$.
\begin{equation}\label{1-eq-decomposition-W-2}
	\Vert F_Y-F_X \Vert_{\mathcal{W}_2}^2=\int_{A_0} (F_X^{\leftarrow}(u)-F_Y^{\leftarrow}(u))^2\,du+\int_{A_1} (F_X^{\leftarrow}(u)-F_Y^{\leftarrow}(u))^2\,du,
\end{equation}
where $A_0=\left\{u \in (0,1) : F_X^{\leftarrow}(u)>F_Y^{\leftarrow}(u)\right\}$ and $A_1=\left\{u \in (0,1) : F_X^{\leftarrow}(u)<F_Y^{\leftarrow}(u)\right\}$. In a similar line of argument as in the situation involving $L_1$ distance, one can argue that the first term contributes against $X \leq_{\textnormal{st}} Y$, while the second term contributes towards $X \leq_{\textnormal{st}} Y$. In the next two sections, we define and analyze two asymptotic usual stochastic orders, based on the decompositions given in \eqref{1-eq-decomposition-L-1} and \eqref{1-eq-decomposition-W-2}, respectively.\vs

\section{$L_1$-asymptotic usual stochastic order}\label{1-section-asymptotic-L-1}

In this section, we consider an asymptotic stochastic order, based on the $L_1$ distance between cdfs (also called Kantorovich distance). We define it as follows.

\begin{definition}\label{1-definition-asymptotic-usual-stochastic-order-L-1}
	Let $\left\{X_t : t \in T\right\}$ and $\left\{Y_t : t \in T\right\}$ be two stochastic processes, with respective classes of cdfs $\left\{F_{X_t} : t \in T\right\}$ and $\left\{F_{Y_t} : t \in T\right\}$. We say that $X_t$ is smaller than $Y_t$ in $L_1$-asymptotic usual stochastic order, denoted by $X_t \leq_{L_1\textnormal{-ast}} Y_t$, as $t \to \infty$, if
	\begin{equation}\label{1-eq-definition-asymptotic-usual-stochastic-order-L-1}
		\lim_{t \to \infty} \int_{P_{0,t}} \left\vert F_{X_t}(x)-F_{Y_t}(x) \right\vert dx=0,
	\end{equation}
	where $P_{0,t}=\{x \in \mathbb{R} : F_{X_t}(x)<F_{Y_t}(x)\}$. We say that $X_t$ is equal to $Y_t$ in $L_1$-asymptotic usual stochastic order and denote it by $X_t =_{L_1\text{-ast}} Y_t$, as $t \to \infty$, if both $X_t \leq_{L_1\textnormal{-ast}} Y_t$ and $Y_t \leq_{L_1\textnormal{-ast}} X_t$ hold true, as $t \to \infty$. \hfill $\blacksquare$
\end{definition}

The $L_1$-asymptotic usual stochastic order neither implies, nor is implied by the asymptotic usual stochastic order, discussed in \cref{1-section-ast}. The next two examples demonstrate this point.

\begin{example}\label{1-example-ast-L-1-1}
	Let us consider the stochastic processes $\{X_t : t \geq 0\}$ and $\{Y_t : t \geq 0\}$ with corresponding cdfs (of $X_t$ and $Y_t$)
	\begin{flalign*}
		&\hspace*{2cm} F_{X_t}(x)=
		\begin{cases*}
			\Phi\left(x\right), & if  $x<0$,\\
			\frac{1}{2}, & if $0 \leq x<1$,\\
			1-e^{1-x}\left(1-\Phi\left(x\right)\right), & if  $x \geq 1$,
		\end{cases*}\\
		&\textnormal{and} &\\
		&\hspace*{2cm} F_{Y_t}(x)=
		\begin{cases*}
			e^x\,\Phi\left(x\right), & if  $x<0$,\\
			\frac{1}{2}, & if $0 \leq x<1-\frac{1}{t+1}$,\\
			\frac{1}{2}+\left(\Phi\left(1\right)-\frac{1}{2}\right)\{1+\left(t+1\right)\left(x-1\right)\}, & if $1-\frac{1}{t+1} \leq x<1$,\\
			\Phi\left(x\right), & if  $x \geq 1$,
		\end{cases*}
	\end{flalign*}
	for every $t \geq 0$. It is easy to verify that $\mu\left(\left\{u \in (0,1) : F_{X_t}^{\leftarrow}(u) > F_{Y_t}^{\leftarrow}(u)\right\}\right)=\Phi(1)-0.5$ and $\int_{P_{0,t}} \left\vert F_{X_t}(x)-F_{Y_t}(x) \right\vert dx=(\Phi(1)-0.5)/(2t+2)$, for every $t \geq 0$. It follows that the asymptotic usual stochastic order does not hold, but the $L_1$-asymptotic usual stochastic order holds.
\end{example}

\begin{example}\label{1-example-ast-L-1-2}
	Let us consider the stochastic processes $\{X_t : t \geq 0\}$ and $\{Y_t : t \geq 0\}$ with corresponding cdfs (of $X_t$ and $Y_t$)
	\begin{flalign*}
		&\hspace*{3.7cm} F_{X_t}(x)=
		\begin{cases*}
			\Phi\left(x+t\right), & if  $x<-t$,\\
			\frac{1}{2}, & if $-t \leq x<1$,\\
			1-e^{1-x}\left(1-\Phi\left(x\right)\right), & if  $x \geq 1$,
		\end{cases*}\\
		&\textnormal{and} &\\
		&\hspace*{3.7cm} F_{Y_t}(x)=
		\begin{cases*}
			e^{x+t}\,\Phi\left(x+t\right), & if  $x<-t$,\\
			\frac{1}{2}+\left\{\frac{\Phi(1)-\frac{1}{2}}{(t+1)^2}\right\}(x+t), & if $-t \leq x<1$,\\
			\Phi\left(x\right), & if  $x \geq 1$,
		\end{cases*}
	\end{flalign*}
	for every $t \geq 0$. It is easy to verify that asymptotic usual stochastic order holds, but $L_1$-asymptotic usual stochastic order does not. \hfill $\blacksquare$
\end{example}

Now, we shall discuss some properties of the $L_1$-asymptotic usual stochastic order. The following proposition shows that equality of $X_t$ and $Y_t$ in $L_1$-asymptotic usual stochastic order is characterized by convergence of the $L_1$ distance between $X_t$ and $Y_t$ to $0$, as $t \to \infty$. The proof is trivial and hence omitted.

\begin{proposition}\label{1-proposition-equality-asymptotic-usual-stochastic-order-L-1}
	$X_t =_{L_1\textnormal{-ast}} Y_t$, as $t \to \infty$ if and only if $\left\Vert F_{X_t}-F_{Y_t} \right\Vert_{L_1} \to 0$, as $t \to \infty$. \hfill $\blacksquare$
\end{proposition}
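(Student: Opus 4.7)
The plan is to unfold both sides of the equivalence directly from \cref{1-definition-asymptotic-usual-stochastic-order-L-1} and the decomposition \eqref{1-eq-decomposition-L-1}, and observe that everything in sight is nonnegative so the two implications reduce to an arithmetic of limits.

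First I would write, for each $t$, the partition of the real line as $P_{0,t} \cup P_{1,t} \cup E_t$, where $P_{0,t}=\{x : F_{X_t}(x)<F_{Y_t}(x)\}$, $P_{1,t}=\{x : F_{X_t}(x)>F_{Y_t}(x)\}$, and $E_t=\{x : F_{X_t}(x)=F_{Y_t}(x)\}$. Since $|F_{X_t}(x)-F_{Y_t}(x)|=0$ on $E_t$, the decomposition \eqref{1-eq-decomposition-L-1} (with $F_X,F_Y$ replaced by $F_{X_t},F_{Y_t}$) gives
\[
\|F_{X_t}-F_{Y_t}\|_{L_1}=\int_{P_{0,t}} |F_{X_t}(x)-F_{Y_t}(x)|\,dx+\int_{P_{1,t}} |F_{X_t}(x)-F_{Y_t}(x)|\,dx.
\]
Call these two summands $I_t$ and $J_t$, respectively. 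By \cref{1-definition-asymptotic-usual-stochastic-order-L-1}, the condition $X_t \leq_{L_1\text{-ast}} Y_t$ is exactly $I_t \to 0$; applying the same definition with the roles of $X_t$ and $Y_t$ swapped, the condition $Y_t \leq_{L_1\text{-ast}} X_t$ is exactly $J_t \to 0$ (the region $P_{0,t}$ from that swapped definition is precisely $P_{1,t}$).

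Now both implications are immediate. For the forward direction, assuming $X_t =_{L_1\text{-ast}} Y_t$ yields $I_t \to 0$ and $J_t \to 0$, hence $\|F_{X_t}-F_{Y_t}\|_{L_1}=I_t+J_t \to 0$. For the converse, $I_t$ and $J_t$ are both nonnegative and bounded above by $I_t+J_t=\|F_{X_t}-F_{Y_t}\|_{L_1}$, so if the sum tends to $0$, each summand does as well, which gives both $X_t \leq_{L_1\text{-ast}} Y_t$ and $Y_t \leq_{L_1\text{-ast}} X_t$. There is no real obstacle here; the only thing worth noting explicitly (to justify omitting the proof in the paper) is that one must observe that $\{F_{X_t}=F_{Y_t}\}$ contributes nothing to the $L_1$ norm, so that the two half-integrals exhaust the total.
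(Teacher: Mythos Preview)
Your proof is correct and is exactly the trivial argument the paper has in mind (the paper omits the proof entirely): the equivalence follows immediately from the decomposition \eqref{1-eq-decomposition-L-1} and the nonnegativity of the two summands $I_t$ and $J_t$.
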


The next result shows that $L_1$-asymptotic stochastic order is a partial order.

\begin{theorem}\label{1-theorem-transitivity-L-1-asymptotic-usual-stochastic-order}
	$L_1$-asymptotic usual stochastic order is a partial order.
\end{theorem}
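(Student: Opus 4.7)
The plan is to verify the three partial-order axioms, with reflexivity and antisymmetry being immediate and transitivity forming the main substantive step. Reflexivity is clear because $P_{0,t}=\{x : F_{X_t}(x)<F_{X_t}(x)\}=\emptyset$, so the integral in \eqref{1-eq-definition-asymptotic-usual-stochastic-order-L-1} is identically zero. Antisymmetry is built into the definition: if both $X_t\leq_{L_1\text{-ast}} Y_t$ and $Y_t\leq_{L_1\text{-ast}} X_t$ hold, then by definition $X_t=_{L_1\text{-ast}} Y_t$.

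The work is in transitivity. Given three processes with $X_t\leq_{L_1\text{-ast}} Y_t$ and $Y_t\leq_{L_1\text{-ast}} Z_t$, I would introduce the three violation sets
\[
A_t=\{x : F_{X_t}(x)<F_{Y_t}(x)\},\ B_t=\{x : F_{Y_t}(x)<F_{Z_t}(x)\},\ C_t=\{x : F_{X_t}(x)<F_{Z_t}(x)\},
\]
mirroring the proof of \cref{1-theorem-asymptotic-usual-stochastic-order-partial-order}. The same contradiction argument used there gives $C_t\subseteq A_t\cup B_t$: if $F_{X_t}(x)\geq F_{Y_t}(x)$ and $F_{Y_t}(x)\geq F_{Z_t}(x)$, then $F_{X_t}(x)\geq F_{Z_t}(x)$, so $x\notin C_t$.

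The key new ingredient is a pointwise magnitude bound on $C_t$. For $x\in C_t$, I would split into three subcases: (i) if $x\in A_t\cap B_t$, then $F_{X_t}(x)<F_{Y_t}(x)<F_{Z_t}(x)$ and
\[
|F_{Z_t}(x)-F_{X_t}(x)|=|F_{Y_t}(x)-F_{X_t}(x)|+|F_{Z_t}(x)-F_{Y_t}(x)|;
\]
(ii) if $x\in A_t\setminus B_t$, then $F_{Y_t}(x)\geq F_{Z_t}(x)>F_{X_t}(x)$, giving $|F_{Z_t}(x)-F_{X_t}(x)|\leq |F_{Y_t}(x)-F_{X_t}(x)|$; (iii) if $x\in B_t\setminus A_t$, then $F_{X_t}(x)\geq F_{Y_t}(x)$ with $F_{Z_t}(x)>F_{X_t}(x)$, giving $|F_{Z_t}(x)-F_{X_t}(x)|\leq |F_{Z_t}(x)-F_{Y_t}(x)|$. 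In every case the bound
\[
|F_{Z_t}(x)-F_{X_t}(x)|\,\mathbf{1}_{C_t}(x)\leq |F_{Y_t}(x)-F_{X_t}(x)|\,\mathbf{1}_{A_t}(x)+|F_{Z_t}(x)-F_{Y_t}(x)|\,\mathbf{1}_{B_t}(x)
\]
holds. Integrating over $\mathbb{R}$ then yields
\[
\int_{C_t}|F_{X_t}-F_{Z_t}|\,dx\leq \int_{A_t}|F_{X_t}-F_{Y_t}|\,dx+\int_{B_t}|F_{Y_t}-F_{Z_t}|\,dx,
\]
and both right-hand side terms vanish as $t\to\infty$ by hypothesis, giving $X_t\leq_{L_1\text{-ast}} Z_t$.

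The only mildly subtle step is the case analysis producing the pointwise inequality; the rest is bookkeeping. I expect this to be the main obstacle, since a naive triangle-inequality bound $|F_{X_t}-F_{Z_t}|\leq |F_{X_t}-F_{Y_t}|+|F_{Y_t}-F_{Z_t}|$ integrated over all of $\mathbb{R}$ is useless (the full $L_1$ distances need not tend to zero), so it is essential to restrict each term on the right to its own violation set and to verify that the restriction is valid pointwise on $C_t$.
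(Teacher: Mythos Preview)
Your proof is correct and follows essentially the same approach as the paper's: both argue reflexivity and antisymmetry are immediate, establish $C_t\subseteq A_t\cup B_t$, and then split $C_t$ into the three pieces $A_t\cap B_t$, $A_t\setminus B_t$, $B_t\setminus A_t$ with the same pointwise comparisons. Your indicator-function formulation yields the slightly sharper bound $\int_{C_t}\leq \int_{A_t}+\int_{B_t}$ (the paper, bounding each piece separately over the full sets $P_{0,t}$ and $Q_{0,t}$, picks up an immaterial factor of $2$), but otherwise the arguments coincide.
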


\begin{proof}
	$L_1$-asymptotic usual stochastic order is trivially {\it reflexive} and {\it antisymmetric}. Let $\left\{X_t : t \in T\right\}$, $\left\{Y_t : t \in T\right\}$ and $\left\{Z_t : t \in T\right\}$ be three stochastic processes such that $X_t \leq_{L_1\textnormal{-ast}} Y_t$, as $t \to \infty$ and $Y_t \leq_{L_1\textnormal{-ast}} Z_t$, as $t \to \infty$. To prove {\it transitivity}, we have to show that $X_t \leq_{L_1\textnormal{-ast}} Z_t$, as $t \to \infty$. Let $P_{0,t}=\left\{x \in \mathbb{R} : F_{X_t}(x)<F_{Y_t}(x)\right\}$, $Q_{0,t}=\left\{x \in \mathbb{R} : F_{Y_t}(x)<F_{Z_t}(x)\right\}$ and $R_{0,t}=\left\{x \in \mathbb{R} : F_{X_t}(x)<F_{Z_t}(x)\right\}$. Then $R_{0,t} \subseteq P_{0,t} \medcup Q_{0,t}$. Now, we consider the partition $R_{0,t}=S_1 \medcup S_2 \medcup S_3$, where $S_1=P_{0,t} \medcap Q_{0,t}^c \medcap R_{0,t}$, $S_2=P_{0,t}^c \medcap Q_{0,t} \medcap R_{0,t}$ and $S_3=P_{0,t} \medcap Q_{0,t} \medcap R_{0,t}$. Then
	\begin{equation}\label{1-eq-theorem-transitivity-asymptotic-usual-stochastic-order-L-1}
		\int_{R_{0,t}} \left\vert F_{X_t}(x)-F_{Z_t}(x) \right\vert dx = \sum_{i=1}^3 \int_{S_i} \left\vert F_{X_t}(x)-F_{Z_t}(x) \right\vert dx.
	\end{equation}
	Note that, for $x \in S_1$, we have
	\begin{equation*}
		\int_{S_1} \left\vert F_{X_t}(x)-F_{Z_t}(x) \right\vert dx \leq \int_{P_{0,t}} \left\vert F_{X_t}(x)-F_{Y_t}(x) \right\vert dx.
	\end{equation*}
	Similarly, we can see that
	\begin{equation*}
		\int_{S_2} \left\vert F_{X_t}(x)-F_{Z_t}(x) \right\vert dx \leq \int_{Q_{0,t}} \left\vert F_{Y_t}(x)-F_{Z_t}(x) \right\vert dx
	\end{equation*}
	and
	\[
	\int_{S_3} \left\vert F_{X_t}(x)-F_{Z_t}(x) \right\vert dx \leq \int_{P_{0,t}} \left\vert F_{X_t}(x)-F_{Y_t}(x) \right\vert dx+\int_{Q_{0,t}} \left\vert F_{Y_t}(x)-F_{Z_t}(x) \right\vert dx.
	\]
	Applying these upper bounds in \eqref{1-eq-theorem-transitivity-asymptotic-usual-stochastic-order-L-1}, we have
	\[
	\int_{R_{0,t}} \left\vert F_{X_t}(x)-F_{Z_t}(x) \right\vert dx \leq 2\left\{\int_{P_{0,t}} \left\vert F_{X_t}(x)-F_{Y_t}(x) \right\vert dx+\int_{Q_{0,t}} \left\vert F_{Y_t}(x)-F_{Z_t}(x) \right\vert dx\right\}.
	\]
	The proof then follows from the hypotheses.
\end{proof}

Let us define $T_g=\{y \in \mathbb{R} : g^{\leftarrow}(y)<g^{\rightarrow}(y)\}$ for any real-valued function $g$. Observe that $T_g$ is at most countable if $g$ is nondecreasing. The following result shows that $L_1$-asymptotic usual stochastic order is closed under nondecreasing transformations under certain conditions.

\begin{theorem}\label{1-theorem-increasing-function-asymptotic-usual-stochastic-order-L-1}
	Let $\left\{X_t : t \in T\right\}$ and $\left\{Y_t : t \in T\right\}$ be two stochastic processes such that $X_t \leq_{L_1\textnormal{-ast}} Y_t$, as $t \to \infty$. Also, let $\psi:\mathbb{R} \to \mathbb{R}$ be a nondecreasing function, which is differentiable almost everywhere with bounded derivative. Then $\psi(X_t) \leq_{L_1\textnormal{-ast}} \psi(Y_t)$, as $t \to \infty$.
\end{theorem}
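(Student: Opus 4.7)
My plan is to reduce everything to the quantile side, where the Lipschitz nature of $\psi$ (which follows from the hypothesis) makes the bound immediate.

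First, I would record the classical $L_1$/Wasserstein$_1$ identity: for any two cdfs $F,G$,
\[
\int_{\mathbb{R}} \bigl(G(x)-F(x)\bigr)_{+}\,dx \;=\; \int_0^1 \bigl(F^{\leftarrow}(u)-G^{\leftarrow}(u)\bigr)_{+}\,du,
\]
which is a one-line Fubini computation based on the equivalence $F(x) < u \leq G(x) \iff G^{\leftarrow}(u)\leq x < F^{\leftarrow}(u)$. Applying this to $F_{X_t}$ and $F_{Y_t}$ turns the hypothesis into
\[
\int_{P_{0,t}} \bigl|F_{X_t}(x)-F_{Y_t}(x)\bigr|\,dx \;=\; \int_0^1 \bigl(F_{X_t}^{\leftarrow}(u)-F_{Y_t}^{\leftarrow}(u)\bigr)_{+}\,du \;\xrightarrow{t\to\infty}\;0.
\]

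Next, I would handle the transformed processes. Since $\psi$ is nondecreasing with bounded derivative (hence continuous, in fact Lipschitz with constant $M := \operatorname*{ess\,sup}|\psi'|$), the standard quantile identity $F_{\psi(X)}^{\leftarrow}(u) = \psi\bigl(F_X^{\leftarrow}(u)\bigr)$ holds for every $u\in(0,1)$; I would verify this quickly using $P(\psi(X)\leq y) = F_X(\psi^{\rightarrow}(y))$ and the fact that $\psi(z+)=\psi(z)$ by continuity. Applying the $L_1$/quantile identity to $F_{\psi(X_t)},F_{\psi(Y_t)}$ then gives
\[
\int_{\widetilde{P}_{0,t}} \bigl|F_{\psi(X_t)}(y)-F_{\psi(Y_t)}(y)\bigr|\,dy \;=\; \int_0^1 \bigl(\psi(F_{X_t}^{\leftarrow}(u))-\psi(F_{Y_t}^{\leftarrow}(u))\bigr)_{+}\,du,
\]
where $\widetilde{P}_{0,t}=\{y: F_{\psi(X_t)}(y)<F_{\psi(Y_t)}(y)\}$.

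Finally, the Lipschitz bound $\psi(a)-\psi(b)\leq M(a-b)$ for $a\geq b$ yields
\[
\bigl(\psi(F_{X_t}^{\leftarrow}(u))-\psi(F_{Y_t}^{\leftarrow}(u))\bigr)_{+} \;\leq\; M\bigl(F_{X_t}^{\leftarrow}(u)-F_{Y_t}^{\leftarrow}(u)\bigr)_{+},
\]
so integrating and comparing with the hypothesis,
\[
\int_{\widetilde{P}_{0,t}} \bigl|F_{\psi(X_t)}(y)-F_{\psi(Y_t)}(y)\bigr|\,dy \;\leq\; M \int_{P_{0,t}} \bigl|F_{X_t}(x)-F_{Y_t}(x)\bigr|\,dx \;\xrightarrow{t\to\infty}\;0,
\]
which is exactly $\psi(X_t)\leq_{L_1\text{-ast}} \psi(Y_t)$.

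The main obstacle, and the step I would expect to justify most carefully, is the passage from \emph{``differentiable a.e.\ with bounded derivative''} to the Lipschitz inequality for $\psi$: for a nondecreasing $\psi$ this is equivalent to absolute continuity plus boundedness of $\psi'$, which I believe is the intended reading (otherwise the Cantor function is a trivial obstruction). A secondary technicality is the quantile identity $F_{\psi(X)}^{\leftarrow}=\psi\circ F_X^{\leftarrow}$, which requires continuity of $\psi$ to avoid discrepancies of measure zero in $u$; both issues are dispatched once Lipschitz continuity is secured.
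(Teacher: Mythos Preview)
Your approach is correct and takes a genuinely different route from the paper's. The paper stays on the cdf side throughout: it writes $F_{\psi(X_t)}(x)=F_{X_t}(\psi^{\rightarrow}(x))$ on the continuity set $C_\psi$, bounds the integral over $P_{0,t,\psi}$ by one over $\psi(P_{0,t})$, and then changes variables $x=\psi(y)$ to reach $\int_{P_{0,t}}|F_{X_t}-F_{Y_t}|\,d\psi(y)\leq K\int_{P_{0,t}}|F_{X_t}-F_{Y_t}|\,dy$. You instead pass immediately to the quantile side via the $L_1$/$\mathcal{W}_1$ identity (which the paper records separately as its quantile characterization of $\leq_{L_1\textnormal{-ast}}$), invoke $F_{\psi(X)}^{\leftarrow}=\psi\circ F_X^{\leftarrow}$, and apply the Lipschitz bound pointwise in $u$. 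This is exactly the argument the paper gives for its $\mathcal{W}_2$ analogue, so your version unifies the $L_1$ and $\mathcal{W}_2$ cases and is arguably cleaner; the price is that you need continuity of $\psi$ up front for the quantile composition identity, whereas the paper's cdf-side computation is set up to tolerate jump discontinuities of $\psi$ by restricting to $C_\psi$. Your Cantor-function worry is well taken and applies equally to the paper: its step $\int_{P_{0,t}}|\cdot|\,d\psi(y)=\int_{P_{0,t}\cap D_\psi}|\cdot|\,\psi'(y)\,dy$ already presumes that $d\psi$ has no singular part, so both arguments implicitly read the hypothesis as ``Lipschitz.''
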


\begin{proof}
	First, we consider the situation where $\psi$ is nondecreasing. Let $C_{\psi}$ be the set of points where $\psi$ is continuous and let $D_{\psi}$ be the set of points where $\psi$ is differentiable. Clearly, $D_{\psi} \subseteq C_{\psi}$ and by the hypothesis, $\mu(D_{\psi}^c)=0$. We have $F_{\psi(X_t)}(x)=F_{X_t}(\psi^{\rightarrow}(x))$ and $F_{\psi(Y_t)}(x)=F_{Y_t}(\psi^{\rightarrow}(x))$, for every $x \in C_{\psi}$. Again, by the hypothesis, $\psi$ has bounded derivative on $D_{\psi}$. Below we take $K$ as an upper bound of $\psi'(x)$ over $D_{\psi}$. Let $P_{0,t}=\{x \in \mathbb{R} : F_{X_t}(x)<F_{Y_t}(x)\}$ and $P_{0,t,\psi}=\{x \in \mathbb{R} : F_{\psi(X_t)}(x)<F_{\psi(Y_t)}(x)\}$. Since, $P_{0,t,\psi} \setminus C_{\psi} \subseteq P_{0,t,\psi} \setminus D_{\psi} \subseteq D_{\psi}^c$, we have
	\begin{align}\label{1-eq-theorem-increasing-function-asymptotic-usual-stochastic-order-L-1-1-C-psi}
		\int_{P_{0,t,\psi}} \left\vert F_{\psi(X_t)}\left(x\right)-F_{\psi(Y_t)}\left(x\right) \right\vert dx &=\int_{P_{0,t,\psi} \medcap C_{\psi}} \left\vert F_{X_t}\left(\psi^{\rightarrow}\left(x\right)\right)-F_{Y_t}\left(\psi^{\rightarrow}\left(x\right)\right) \right\vert dx\nonumber\\
		&\leq \int_{\psi\left(P_{0,t}\right)} \left\vert F_{X_t}\left(\psi^{\rightarrow}\left(x\right)\right)-F_{Y_t}\left(\psi^{\rightarrow}\left(x\right)\right) \right\vert dx\nonumber\\
		&=\int_{\psi\left(P_{0,t}\right) \setminus T_{\psi}} \left\vert F_{X_t}\left(\psi^{\rightarrow}\left(x\right)\right)-F_{Y_t}\left(\psi^{\rightarrow}\left(x\right)\right) \right\vert dx\nonumber\\
		&\leq \int_{P_{0,t}} \left\vert F_{X_t}\left(y\right)-F_{Y_t}\left(y\right) \right\vert d\psi\left(y\right)\nonumber\\
		&=\int_{P_{0,t} \medcap D_{\psi}} \left\vert F_{X_t}\left(y\right)-F_{Y_t}\left(y\right) \right\vert \psi'\left(y\right) dy\nonumber\\
		&\leq K \int_{P_{0,t} \medcap D_{\psi}} \left\vert F_{X_t}\left(y\right)-F_{Y_t}\left(y\right) \right\vert dy\nonumber\\
		&\leq K \int_{P_{0,t}} \left\vert F_{X_t}\left(y\right)-F_{Y_t}\left(y\right) \right\vert dy,
	\end{align}
	where the first inequality holds since $P_{0,t,\psi} \medcap C_{\psi} \subseteq \psi(P_{0,t})$, the second equality is due to the fact that $T_{\psi}$ is at most countable and hence $\int_{\psi\left(P_{0,t}\right) \medcap T_{\psi}} \left\vert F_{X_t}\left(\psi^{\rightarrow}\left(x\right)\right)-F_{Y_t}\left(\psi^{\rightarrow}\left(x\right)\right) \right\vert dx=0$, the second inequality follows since $x \in \psi(P_{0,t}) \setminus T_{\psi}$ implies that $\psi^{\rightarrow}(x) \in P_{0,t}$, the third equality holds since $D_{\psi}^c$ has Lebesgue measure $0$, by the hypothesis. The proof now follows from \eqref{1-eq-theorem-increasing-function-asymptotic-usual-stochastic-order-L-1-1-C-psi} and the hypothesis that $X_t \leq_{L_1\textnormal{-ast}} Y_t$, as $t \to \infty$.
\end{proof}

\begin{lemma}\label{1-lemma-L-1-ast-negation}
	Let $\left\{X_t : t \in T\right\}$ and $\left\{Y_t : t \in T\right\}$ be two stochastic processes such that $X_t \leq_{L_1\textnormal{-ast}} Y_t$, as $t \to \infty$. Then $-Y_t \leq_{L_1\textnormal{-ast}} -X_t$, as $t \to \infty$.
\end{lemma}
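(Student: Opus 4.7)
The plan is to carry the $L_1$-asymptotic order of $X_t$ and $Y_t$ over to $-Y_t$ and $-X_t$ by the change of variables $y=-x$, using the standard identity relating the cdf of a random variable to the cdf of its negation. For any random variable $Z$, we have $F_{-Z}(x)=P(-Z\leq x)=1-P(Z<-x)=1-F_Z((-x)-)$, where $F_Z(\cdot-)$ denotes the left-continuous version.

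First I would identify the "violation" region for the negated processes. Writing $Q_{0,t}=\{x\in\mathbb{R}:F_{-Y_t}(x)<F_{-X_t}(x)\}$, the identity above gives $F_{-Y_t}(x)<F_{-X_t}(x)\iff F_{X_t}((-x)-)<F_{Y_t}((-x)-)$, so substituting $y=-x$ identifies $Q_{0,t}$ (up to the map $x\mapsto -x$) with $\widetilde{P}_{0,t}:=\{y\in\mathbb{R}:F_{X_t}(y-)<F_{Y_t}(y-)\}$. The modulus in the integrand transforms in exactly the same way: $|F_{-Y_t}(x)-F_{-X_t}(x)|=|F_{X_t}((-x)-)-F_{Y_t}((-x)-)|$.

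Next I would perform the change of variables. Since Lebesgue measure on $\mathbb{R}$ is invariant under $x\mapsto -x$, we obtain
\[
\int_{Q_{0,t}}|F_{-Y_t}(x)-F_{-X_t}(x)|\,dx=\int_{\widetilde{P}_{0,t}}|F_{X_t}(y-)-F_{Y_t}(y-)|\,dy.
\]
The final step is to replace the left-limits by the right-continuous values. Both $F_{X_t}$ and $F_{Y_t}$ are monotone, hence have at most countably many discontinuities, so $F_{X_t}(y-)=F_{X_t}(y)$ and $F_{Y_t}(y-)=F_{Y_t}(y)$ outside a Lebesgue-null set. Consequently $\widetilde{P}_{0,t}$ differs from $P_{0,t}=\{y:F_{X_t}(y)<F_{Y_t}(y)\}$ by a null set, and the integrand coincides with $|F_{X_t}(y)-F_{Y_t}(y)|$ almost everywhere. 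Thus the right-hand side above equals $\int_{P_{0,t}}|F_{X_t}(y)-F_{Y_t}(y)|\,dy$, which tends to $0$ by the hypothesis $X_t\leq_{L_1\text{-ast}}Y_t$, proving $-Y_t\leq_{L_1\text{-ast}}-X_t$.

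The only mildly delicate point is the bookkeeping with left-limits when distributions are allowed to be discontinuous; it is handled by the observation that the set of discontinuities of a monotone function is countable and therefore Lebesgue-negligible. Everything else is just the change of variables $y=-x$ combined with the elementary formula for $F_{-Z}$.
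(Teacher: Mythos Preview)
Your proof is correct and follows essentially the same approach as the paper's: both use the identity $F_{-Z}(x)=1-F_Z((-x)-)$ (the paper writes this as $1-F_Z(-x)+P\{Z=-x\}$), then dispose of the discrepancy between $F_Z(\cdot-)$ and $F_Z(\cdot)$ by observing that the set of atoms is countable and hence Lebesgue-null, reducing the integral to $\int_{P_{0,t}}\lvert F_{X_t}-F_{Y_t}\rvert$. The only cosmetic difference is that the paper records an inequality (via $P_{0,t}'\setminus M_t\subseteq -P_{0,t}$) where you obtain equality up to a null set; either suffices.
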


\begin{proof}
	Let us fix $t \in T$ and define $P_{0,t}'=\{x \in \mathbb{R} : F_{-Y_t}(x)<F_{-X_t}(x)\}$. Then
	\[
	P_{0,t}'=\left\{x \in \mathbb{R} : F_{X_t}(-x)-P\{X_t=-x\}<F_{Y_t}(-x)-P\{Y_t=-x\}\right\}.
	\]
	Let $M_t=\{x \in \mathbb{R} : \max{\{P\{X_t=-x\}, P\{Y_t=-x\}\}}>0\}$. Then $M_t$ is at most countable. Now, for every $t \in T$, we have
	\begin{align}
		&\,\,\,\,\,\,\,\int_{P_{0,t}'} \left\vert F_{-X_t}(x)-F_{-Y_t}(x) \right\vert dx\nonumber\\
		&=\int_{P_{0,t}'} \left\vert F_{Y_t}(-x)-F_{X_t}(-x)+P\{X_t=-x\}-P\{Y_t=-x\} \right\vert dx\nonumber\\
		&=\int_{P_{0,t}' \cap M_t} \left\vert P\{Y_t<-x\}-P\{X_t<-x\} \right\vert dx+\int_{P_{0,t}' \setminus M_t} \left\vert F_{Y_t}(-x)-F_{X_t}(-x) \right\vert dx\label{1-eq-proposition-L-1-ast-negation}\\
		&=\int_{P_{0,t}' \setminus M_t} \left\vert F_{Y_t}(-x)-F_{X_t}(-x) \right\vert dx\nonumber\\
		&\leq \int_{P_{0,t}} \left\vert F_{X_t}(x)-F_{Y_t}(x) \right\vert dx,\label{1-eq-proposition-L-1-ast-negation-2}
	\end{align}
	where the equality follows since the first integral in \eqref{1-eq-proposition-L-1-ast-negation} is $0$ and the last inequality is due to the fact that $P_{0,t}' \setminus M_t \subseteq -P_{0,t}:=\left\{-x : x \in P_{0,t}\right\}$. The proof follows since the integral in \eqref{1-eq-proposition-L-1-ast-negation-2} goes to $0$, as $t \to \infty$, by the hypothesis.
\end{proof}

The following corollary follows immediately from \cref{1-lemma-L-1-ast-negation} and \cref{1-theorem-increasing-function-asymptotic-usual-stochastic-order-L-1}.

\begin{corollary}\label{1-corollary-increasing-function-asymptotic-usual-stochastic-order-L-1}
	Let $\left\{X_t : t \in T\right\}$ and $\left\{Y_t : t \in T\right\}$ be two stochastic processes such that $X_t \leq_{L_1\textnormal{-ast}} Y_t$, as $t \to \infty$. Also, let $\psi:\mathbb{R} \to \mathbb{R}$ be a nonincreasing function, which is differentiable almost everywhere \textnormal{(}with respect to the Lebesgue measure\textnormal{)} with bounded derivative. Then $\psi(Y_t) \leq_{L_1\textnormal{-ast}} \psi(X_t)$, as $t \to \infty$. \hfill $\blacksquare$
\end{corollary}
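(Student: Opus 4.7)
The plan is to reduce this to the two results immediately preceding it, by factoring a nonincreasing $\psi$ as the composition of negation with a nondecreasing map. Specifically, I would set $\phi := -\psi$ and first verify that $\phi$ satisfies the hypotheses of \cref{1-theorem-increasing-function-asymptotic-usual-stochastic-order-L-1}: since $\psi$ is nonincreasing, $\phi$ is nondecreasing; since $\psi$ is differentiable almost everywhere with $|\psi'|$ bounded by some constant $K$, the same holds for $\phi$, with $|\phi'| = |\psi'| \leq K$ on the common set of differentiability.

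Next, I would apply \cref{1-theorem-increasing-function-asymptotic-usual-stochastic-order-L-1} to $\phi$ and the pair $\{X_t\}$, $\{Y_t\}$ to conclude $\phi(X_t) \leq_{L_1\textnormal{-ast}} \phi(Y_t)$, as $t \to \infty$. Unpacking the definition of $\phi$, this reads
\[
-\psi(X_t) \leq_{L_1\textnormal{-ast}} -\psi(Y_t), \quad \text{as } t \to \infty.
\]

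Finally, I would apply \cref{1-lemma-L-1-ast-negation} to the processes $U_t := -\psi(X_t)$ and $V_t := -\psi(Y_t)$, which reverses the order under negation and yields $-V_t \leq_{L_1\textnormal{-ast}} -U_t$, i.e.\ $\psi(Y_t) \leq_{L_1\textnormal{-ast}} \psi(X_t)$, as $t \to \infty$, exactly the desired conclusion.

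There is essentially no obstacle beyond this bookkeeping; the only thing to be careful about is that the a.e.\ differentiability and boundedness of the derivative transfer cleanly from $\psi$ to $-\psi$, which is immediate. No further analytic estimate is required, since all of the substantive work (handling $\psi^{\rightarrow}$, the exceptional set $T_{\psi}$, and the change-of-variables inequality) has already been done inside the proof of \cref{1-theorem-increasing-function-asymptotic-usual-stochastic-order-L-1}.
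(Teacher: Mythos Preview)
Your proposal is correct and matches the paper's own approach exactly: the paper states that the corollary ``follows immediately from \cref{1-lemma-L-1-ast-negation} and \cref{1-theorem-increasing-function-asymptotic-usual-stochastic-order-L-1}'', and your argument---replacing $\psi$ by $\phi=-\psi$, applying the nondecreasing-transformation theorem, and then reversing via the negation lemma---is precisely the intended combination.
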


The next lemma may be used in characterizing $L_1$-asymptotic usual stochastic order in terms of quantile function, given in \cref{1-theorem-W_1-L_1-2}. The proof is omitted.

\begin{lemma}\label{1-lemma-W_1-L_1-1}
	Let $F_X$ and $F_Y$ be any two cdfs. Write $P_0=\{x \in \mathbb{R} : F_X(x)<F_Y(x)\}$ and $A_0=\{u \in (0,1) : F_X^{\leftarrow}>F_Y^{\leftarrow}\}$. Then we have
	\begin{equation}\label{eq-L-1-quantile-characterization}
		\int_{P_0} \left\vert F_X(x)-F_Y(x) \right\vert dx=\int_{A_0} \left\vert F_X^{\leftarrow}(u)-F_Y^{\leftarrow}(u) \right\vert du. \tag*{$\blacksquare$}
	\end{equation}
\end{lemma}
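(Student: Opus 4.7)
The plan is to prove the identity by a layer-cake / Fubini argument, exploiting the classical duality $F^{\leftarrow}(u) \leq x \iff u \leq F(x)$ that holds for the left-continuous inverse. First I would rewrite the absolute value on $P_0$ using the fact that there $F_Y(x) - F_X(x) \geq 0$, namely
\[
|F_X(x) - F_Y(x)| \cdot \mathbf{1}_{P_0}(x) = \int_0^1 \mathbf{1}\{F_X(x) < u \leq F_Y(x)\}\,du,
\]
noting that the indicator on the right forces $F_X(x) < F_Y(x)$, i.e. $x \in P_0$, automatically. Integrating over $x$ and applying Tonelli gives
\[
\int_{P_0} |F_X(x) - F_Y(x)|\,dx = \int_0^1 \mu\bigl(\{x \in \mathbb{R} : F_X(x) < u \leq F_Y(x)\}\bigr)\,du.
\]

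The second step is to identify the inner set in terms of quantile functions. From the duality, $u \leq F_Y(x)$ is equivalent to $F_Y^{\leftarrow}(u) \leq x$, and $F_X(x) < u$ is equivalent to $x < F_X^{\leftarrow}(u)$. Hence
\[
\{x \in \mathbb{R} : F_X(x) < u \leq F_Y(x)\} = [F_Y^{\leftarrow}(u),\, F_X^{\leftarrow}(u)),
\]
whose Lebesgue measure is $(F_X^{\leftarrow}(u) - F_Y^{\leftarrow}(u))_+$. This quantity vanishes off $A_0$ and equals $|F_X^{\leftarrow}(u) - F_Y^{\leftarrow}(u)|$ on $A_0$, so substituting yields the claimed equality.

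The main obstacle is bookkeeping rather than substance: one must handle the boundary values $u \in \{0,1\}$ (where $F^{\leftarrow}$ may be infinite, but the strict/weak inequality structure forces the integrand to vanish), verify that strict vs.\ non-strict inequalities are routed correctly through the left-continuous inverse, and check that possible atoms of $F_X$ or $F_Y$ do not cause a mismatch. All of these are absorbed into the single exact identity $\{F_X(x) < u \leq F_Y(x)\} = [F_Y^{\leftarrow}(u), F_X^{\leftarrow}(u))$, which is valid for arbitrary cdfs without any continuity hypothesis, and this is precisely why the lemma holds in full generality.
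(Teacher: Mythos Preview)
Your argument is correct. The layer-cake identity $|F_X(x)-F_Y(x)|\,\mathbf{1}_{P_0}(x)=\int_0^1 \mathbf{1}\{F_X(x)<u\leq F_Y(x)\}\,du$ is exact, Tonelli applies since everything is nonnegative, and the Galois-connection equivalence $F^{\leftarrow}(u)\leq x \iff u\leq F(x)$ (valid for any right-continuous cdf) turns the inner set into $[F_Y^{\leftarrow}(u),F_X^{\leftarrow}(u))$ with no continuity assumptions needed. Your handling of the boundary cases and of strict versus non-strict inequalities is accurate.

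The paper itself omits the proof of this lemma, so there is no authors' argument to compare against. Your Fubini/layer-cake route is the natural and standard one for identities of this type; it is entirely self-contained and does not rely on any of the auxiliary lemmas in the paper, which is an advantage in a context where the result must hold for arbitrary (possibly discontinuous) cdfs.
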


The next theorem, which follows from \cref{1-definition-asymptotic-usual-stochastic-order-L-1} and \cref{1-lemma-W_1-L_1-1}, gives a characterization of the $L_1$-asymptotic usual stochastic order in terms of quantile functions.

\begin{theorem}\label{1-theorem-W_1-L_1-2}
	$X_t \leq_{L_1\textnormal{-ast}} Y_t$, as $t \to \infty$ if and only if
	\begin{equation}\label{1-eq-theorem-W_1-L_1-2}
		\lim_{t \to \infty} \int_{A_{0,t}} \left\vert F_{X_t}^{\leftarrow}(u)-F_{Y_t}^{\leftarrow}(u) \right\vert du=0,
	\end{equation}
	where $A_{0,t}=\{u \in (0,1) : F_{X_t}^{\leftarrow}(u)>F_{Y_t}^{\leftarrow}(u)\}$. \hfill $\blacksquare$
\end{theorem}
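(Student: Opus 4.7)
The plan is to reduce Theorem \ref{1-theorem-W_1-L_1-2} to a termwise application of Lemma \ref{1-lemma-W_1-L_1-1}. For each fixed $t \in T$, applying that lemma to the pair of cdfs $F_{X_t}$ and $F_{Y_t}$ yields the pointwise (in $t$) identity
\[
\int_{P_{0,t}} \left\vert F_{X_t}(x) - F_{Y_t}(x) \right\vert dx \;=\; \int_{A_{0,t}} \left\vert F_{X_t}^{\leftarrow}(u) - F_{Y_t}^{\leftarrow}(u) \right\vert du.
\]
Since the two real-valued functions of $t$ on the left-hand sides of \eqref{1-eq-definition-asymptotic-usual-stochastic-order-L-1} and \eqref{1-eq-theorem-W_1-L_1-2} are thus identically equal, one tends to $0$ as $t \to \infty$ if and only if the other does. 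Combining with \cref{1-definition-asymptotic-usual-stochastic-order-L-1} produces the asserted equivalence in one line.

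All the substance therefore sits inside Lemma \ref{1-lemma-W_1-L_1-1}, which I would verify for completeness by a Fubini/layer-cake argument. On $P_0$ the integrand is nonnegative and admits the layer representation $F_Y(x) - F_X(x) = \int_0^1 \mathbf{1}\{F_X(x) < u \leq F_Y(x)\}\, du$. Interchanging the order of integration (legitimate by Tonelli, since the integrand is nonnegative), the inner slice $\{x : F_X(x) < u \leq F_Y(x)\}$ is computed using the standard identity $\{x : F(x) \geq u\} = [F^{\leftarrow}(u), \infty)$: it equals the half-open interval $[F_Y^{\leftarrow}(u), F_X^{\leftarrow}(u))$, which is nonempty precisely when $u \in A_0$ and has Lebesgue measure $F_X^{\leftarrow}(u) - F_Y^{\leftarrow}(u)$. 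Integrating in $u$ delivers the identity, and the implicit restriction $x \in P_0$ is automatic because $F_X(x) < u \leq F_Y(x)$ forces $F_X(x) < F_Y(x)$.

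Given the lemma, no genuine obstacle remains: the theorem is purely distribution-free, requiring neither the continuity assumption on $F_{X_t}, F_{Y_t}$ invoked in \cref{1-theorem-ast-inverse} nor any information about the joint law of $(X_t, Y_t)$. The mildest point worth flagging is that the exchange of integrals in the proof of the lemma works for arbitrary cdfs because the integrand is nonnegative; no integrability assumption is required beforehand, since the equality is understood in $[0, \infty]$.
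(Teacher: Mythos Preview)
Your proposal is correct and matches the paper's approach exactly: the paper also derives the theorem immediately from \cref{1-definition-asymptotic-usual-stochastic-order-L-1} and \cref{1-lemma-W_1-L_1-1}, with no further argument. Your Fubini/layer-cake sketch of the lemma (whose proof the paper omits) is also correct.
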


The following theorem shows that the $L_1$-asymptotic usual stochastic order implies the mean order in an asymptotic sense. We need the following lemma in order to prove it.

\begin{lemma}\label{1-lemma-ast-expectation-inequality-L-1}
	Let $X$ be a random variable with cdf $F$. Then
	\begin{equation}\label{1-eq-lemma-ast-expectation-inequality-L-1}
		E\left(X\right)=\int_0^1 F^{\leftarrow}\left(u\right)\,du.
	\end{equation}
\end{lemma}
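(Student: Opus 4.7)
The plan is to reduce the identity to the well-known quantile transform property: if $U$ is uniformly distributed on $(0,1)$, then $F^{\leftarrow}(U)$ has cdf $F$, so that $F^{\leftarrow}(U)$ and $X$ agree in distribution. Once this is in hand, applying the law of the unconscious statistician gives
\[
E(X)=E\bigl(F^{\leftarrow}(U)\bigr)=\int_0^1 F^{\leftarrow}(u)\,du,
\]
where the last equality is just the definition of expectation for a function of a $\mathrm{Uniform}(0,1)$ variable (and both sides are well-defined, possibly $\pm\infty$, simultaneously).

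The technical core is therefore the pointwise equivalence
\[
F^{\leftarrow}(u)\leq x \quad\Longleftrightarrow\quad u\leq F(x),\qquad u\in(0,1),\ x\in\mathbb{R}.
\]
For the forward direction, I would use right-continuity of $F$: since $F^{\leftarrow}(u)=\inf\{y:F(y)\geq u\}$ is attained as a limit from above, right-continuity gives $F(F^{\leftarrow}(u))\geq u$, and monotonicity then yields $F(x)\geq F(F^{\leftarrow}(u))\geq u$. For the reverse direction, if $u\leq F(x)$ then $x$ itself lies in $\{y:F(y)\geq u\}$, so $F^{\leftarrow}(u)\leq x$ by definition. Applying this with $x$ arbitrary and $U\sim\mathrm{Uniform}(0,1)$ gives
\[
P\bigl(F^{\leftarrow}(U)\leq x\bigr)=P\bigl(U\leq F(x)\bigr)=F(x),
\]
which is exactly the distributional identity $F^{\leftarrow}(U)\stackrel{d}{=}X$.

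I do not expect any serious obstacle here; the only subtlety is that the left-continuous inverse must be handled carefully at jump points of $F$, but the identity $F^{\leftarrow}(u)\leq x \iff u\leq F(x)$ holds in full generality (no continuity assumption on $F$), so the argument goes through for arbitrary cdfs. An alternative route, if one preferred to avoid invoking the transfer theorem, would be to start from the tail formula $E(X)=\int_0^{\infty}(1-F(x))\,dx-\int_{-\infty}^{0}F(x)\,dx$ and apply Fubini after writing $1-F(x)=\mu(\{u\in(0,1):F^{\leftarrow}(u)>x\})$ (which is itself the same equivalence above); this leads to the right-hand side of \eqref{1-eq-lemma-ast-expectation-inequality-L-1} after splitting the integral into positive and negative parts of $F^{\leftarrow}$.
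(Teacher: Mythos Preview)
Your proposal is correct and follows essentially the same route as the paper: both establish the equivalence $\{F^{\leftarrow}(U)\leq x\}=\{U\leq F(x)\}$ to conclude $F^{\leftarrow}(U)\stackrel{d}{=}X$, and then deduce $E(X)=E(F^{\leftarrow}(U))=\int_0^1 F^{\leftarrow}(u)\,du$. Your write-up is in fact more detailed than the paper's, which simply asserts the equivalence without spelling out the right-continuity argument.
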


\begin{theorem}\label{1-theorem-ast-expectation-inequality-L-1}
	Let $\left\{X_t : t \in T\right\}$ and $\left\{Y_t : t \in T\right\}$ be two stochastic processes such that $X_t \leq_{L_1\textnormal{-ast}} Y_t$, as $t \to \infty$. Then $\lim_{t \to \infty} E\left(X_t\right) \leq \lim_{t \to \infty} E\left(Y_t\right)$, if the limits exist.
\end{theorem}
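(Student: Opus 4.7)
The plan is to express the difference $E(Y_t) - E(X_t)$ via quantile functions using \cref{1-lemma-ast-expectation-inequality-L-1}, and then split the integral according to the sign of the integrand in order to apply the quantile-function characterization of the $L_1$-asymptotic usual stochastic order from \cref{1-theorem-W_1-L_1-2}.

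More concretely, for each fixed $t \in T$, I would write
\[
E(Y_t)-E(X_t)=\int_0^1\left(F_{Y_t}^{\leftarrow}(u)-F_{X_t}^{\leftarrow}(u)\right)du,
\]
which is legitimate by \cref{1-lemma-ast-expectation-inequality-L-1} provided the expectations exist (which is implicit in the hypothesis that the limits exist). I then partition the unit interval into $A_{0,t}=\{u\in(0,1):F_{X_t}^{\leftarrow}(u)>F_{Y_t}^{\leftarrow}(u)\}$ and its complement $A_{0,t}^c$. On $A_{0,t}^c$ the integrand is non-negative, so that part contributes a non-negative quantity; on $A_{0,t}$ the integrand is negative, and the magnitude of its contribution is exactly $\int_{A_{0,t}}\left|F_{X_t}^{\leftarrow}(u)-F_{Y_t}^{\leftarrow}(u)\right|du$. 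Consequently,
\[
E(Y_t)-E(X_t)\geq -\int_{A_{0,t}}\left|F_{X_t}^{\leftarrow}(u)-F_{Y_t}^{\leftarrow}(u)\right|du.
\]

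The hypothesis $X_t\leq_{L_1\text{-ast}}Y_t$ combined with \cref{1-theorem-W_1-L_1-2} tells us that the right-hand side tends to $0$ as $t\to\infty$. Passing to the limit and using the hypothesis that $\lim_{t\to\infty}E(X_t)$ and $\lim_{t\to\infty}E(Y_t)$ exist, I obtain $\lim_{t\to\infty}E(Y_t)-\lim_{t\to\infty}E(X_t)\geq 0$, which is the desired inequality.

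The only subtlety I anticipate is justifying the splitting of the integral and the sign bookkeeping cleanly (making sure, in particular, that the integrals over $A_{0,t}$ and $A_{0,t}^c$ are each well-defined as Lebesgue integrals, not merely their difference). This is automatic once $E(X_t)$ and $E(Y_t)$ are finite, which follows from the existence of their limits in $\mathbb{R}$; if one wishes to allow infinite limits the same argument still works by a standard truncation of the quantile functions, but the cleanest statement is under the tacit assumption that both expectations are finite for all sufficiently large $t$.
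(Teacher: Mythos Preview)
Your proposal is correct and follows essentially the same route as the paper: both use \cref{1-lemma-ast-expectation-inequality-L-1} to pass to quantile functions, split the integral over $A_{0,t}$ and its complement, and invoke \cref{1-theorem-W_1-L_1-2} to conclude. The paper arrives at the equivalent inequality $E(X_t)\leq \int_{A_{0,t}}\lvert F_{X_t}^{\leftarrow}(u)-F_{Y_t}^{\leftarrow}(u)\rvert\,du+E(Y_t)$ by bounding $F_{X_t}^{\leftarrow}\leq F_{Y_t}^{\leftarrow}$ on $(0,1)\setminus A_{0,t}$, which is just a rearrangement of your bound; your integrability remark is a fair caveat that the paper leaves implicit.
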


\begin{proof}
	By \cref{1-lemma-ast-expectation-inequality-L-1}, we have
	\begin{align}
		E\left(X_t\right)&=\int_0^1 F_{X_t}^{\leftarrow}(u)\,du\nonumber\\
		&=\int_{A_{0,t}} F_{X_t}^{\leftarrow}(u)\,du+\int_{(0,1) \setminus A_{0,t}} F_{X_t}^{\leftarrow}(u)\,du\nonumber\\
		&\leq \int_{A_{0,t}} F_{X_t}^{\leftarrow}(u)\,du+\int_{(0,1) \setminus A_{0,t}} F_{Y_t}^{\leftarrow}(u)\,du\label{1-eq-theorem-ast-expectation-inequality-L-1}\\
		&=\int_{A_{0,t}} \left\vert F_{X_t}^{\leftarrow}(u)-F_{Y_t}^{\leftarrow}(u) \right\vert du+E\left(Y_t\right),\label{1-eq-theorem-ast-expectation-inequality-L-1-2}
	\end{align}
	where the inequality follows because $F_{X_t}^{\leftarrow}(u) \leq F_{Y_t}^{\leftarrow}(u)$ if and only if $u \in (0,1) \setminus A_{0,t}$. By taking limit as $t \to \infty$ in \eqref{1-eq-theorem-ast-expectation-inequality-L-1-2} the proof follows from the hypothesis and \cref{1-theorem-W_1-L_1-2}.
\end{proof}

\ni{\it Remark.} The conclusion of \cref{1-theorem-ast-expectation-inequality-L-1} cannot be reached if we assume the asymptotic usual stochastic order, discussed in \cref{1-section-ast}, instead of the $L_1$-asymptotic usual stochastic order. To see this, consider two stochastic processes $\{X_t : t \in T\}$ and $\{Y_t : t \in T\}$ such that $(i)$ $X_t \leq_{\textnormal{ast}} Y_t$, as $t \to \infty$, $(ii)$ $X_t \nleq_{L_1\textnormal{-ast}} Y_t$, as $t \to \infty$, $(iii)$ $F_{X_t}^{\leftarrow}(u)=F_{Y_t}^{\leftarrow}(u)$, whenever $u \notin A_{0,t}$ and $(iv)$ $\lim_{t \to \infty} E\left(X_t\right)$ and $\lim_{t \to \infty} E\left(Y_t\right)$ exist. Note that the four conditions are consistent, in the sense that no three of them contradict the other. Then, by $(iii)$, the inequality in \eqref{1-eq-theorem-ast-expectation-inequality-L-1} becomes an equality, i.e.
\begin{equation}\label{1-eq-theorem-ast-expectation-inequality-L-1-equality}
	E\left(X_t\right)=\int_{A_{0,t}} \left\vert F_{X_t}^{\leftarrow}(u)-F_{Y_t}^{\leftarrow}(u) \right\vert du+E\left(Y_t\right).
\end{equation}
By $(ii)$ and \cref{1-theorem-W_1-L_1-2}, we have $\lim_{t \to \infty} E\left(X_t\right)>\lim_{t \to \infty} E\left(Y_t\right)$, which contradicts the conclusion of \cref{1-theorem-ast-expectation-inequality-L-1}, even though $X_t \leq_{\textnormal{ast}} Y_t$, as $t \to \infty$.\vs

\ni{\it Remark.} Even when $\lim_{t \to \infty} E\left(X_t\right)$ and $\lim_{t \to \infty} E\left(Y_t\right)$ do not exist, then from \eqref{1-eq-theorem-ast-expectation-inequality-L-1}, we still have $\liminf_{t \to \infty} E\left(X_t\right) \leq \liminf_{t \to \infty} E\left(Y_t\right)$ if $\liminf_{t \to \infty} E\left(X_t\right)<\infty$ and $\limsup_{t \to \infty} E\left(X_t\right) \leq \limsup_{t \to \infty} E\left(Y_t\right)$ if $\limsup_{t \to \infty} E\left(X_t\right)<\infty$.\vs

\ni{\it Remark.} If $X_t \leq_{L_1\textnormal{-ast}} Y_t$, as $t \to \infty$, then it follows by combining \cref{1-theorem-increasing-function-asymptotic-usual-stochastic-order-L-1} and \cref{1-theorem-ast-expectation-inequality-L-1} that $\lim_{t \to \infty} E(\psi(X_t)) \leq \textnormal{(resp.} \geq\textnormal{)} \lim_{t \to \infty} E(\psi(Y_t))$, for every nondecreasing (resp. nonincreasing), continuous, almost everywhere differentiable function $\psi:\mathbb{R} \to \mathbb{R}$, with bounded derivative, when $\lim_{t \to \infty} E(\psi(X_t))$ exists. \hfill $\blacksquare$\vs

The following example shows that the counterpart of \cref{1-theorem-ast-convergence-in probability} does not hold for $L_1$-asymptotic usual stochastic order.

\begin{example}\label{1-counterexample-ast-convergence-in probability-L-1}
	Let $\{X_t : t \geq 0\}$ and $\{Y_t : t \geq 0\}$ be two stochastic processes with corresponding cdfs
	\begin{flalign*}
		&\hspace*{1cm} F_{X_t}(x)=
		\begin{cases*}
			\frac{1}{2(t+1)}e^{x+\frac{t+1}{2}}, & if  $-\infty < x \leq -\frac{t+1}{2}$,\\
			x+\frac{t+1}{2}+\frac{1}{2(t+1)}, & if $-\frac{t+1}{2} < x \leq -\frac{t+1}{2}+\frac{1}{2(t+1)}$,\\
			\left(1+\frac{t+1}{2}-\frac{1}{2(t+1)}\right)^{-1}\left(\frac{x-1}{t+1}\right)+\frac{2}{t+1}, & if  $-\frac{t+1}{2}+\frac{1}{2(t+1)} < x \leq 1$,\\
			(t-2)(x-1)+\frac{2}{t+1}, & if  $1 < x \leq 1+\frac{1}{t+1}$,\\
			1-\frac{1}{t+1}e^{-\left(x-1-\frac{1}{t+1}\right)}, & if  $1+\frac{1}{t+1} < x < \infty$;
		\end{cases*}\\
		&\textnormal{and} &\\
		&\hspace*{1cm} F_{Y_t}(x)=
		\begin{cases*}
			\frac{1}{2(t+1)}e^{x+t+1}, & if  $-\infty < x \leq -(t+1)$,\\
			x+\frac{1}{2(t+1)}+t+1, & if $-(t+1) < x \leq -(t+1)+\frac{1}{2(t+1)}$,\\
			\left(t+6-\frac{1}{2(t+1)}\right)^{-1}\left(\frac{x-5}{t+1}\right)+\frac{2}{t+1}, & if  $-(t+1)+\frac{1}{2(t+1)} < x \leq 5$,\\
			(t-2)(x-5)+\frac{2}{t+1}, & if  $5 < x \leq 5+\frac{1}{t+1}$,\\
			1-\frac{1}{t+1}e^{-\left(x-5-\frac{1}{t+1}\right)}, & if  $5+\frac{1}{t+1} < x < \infty$.
		\end{cases*}
	\end{flalign*}
	
	It is easy to verify that $X_t \overset{P}{\to} 1$ and $Y_t \overset{P}{\to} 5$. Observe that $F_{X_t}(x)<F_{Y_t}(x)$ whenever $-t-1+\frac{1}{2(t+1)} \leq x \leq -\frac{t+1}{2}$. Then it is easy to check that
	\begin{align*}
	\int_{P_{0,t}} \left\vert F_{X_t}(x)-F_{Y_t}(x) \right\vert dx &\geq \frac{1}{4}-\frac{1}{4(t+1)^2}\\
	&\to \frac{1}{4}, \textnormal{ as } t \to \infty.
	\end{align*}
	Thus we cannot have $X_t \leq_{L_1\textnormal{-ast}} Y_t$, as $t \to \infty$.
\end{example}\vs

\section{$\mathcal{W}_2$-asymptotic usual stochastic order}\label{1-section-asymptotic-W-2}

In this section, we define and analyze an asymptotic stochastic order, based on the $\mathcal{W}_2$ distance between distribution functions. It is stronger than the same based on the $L_1$ distance, in the sense that it implies the $L_1$-asymptotic usual stochastic order (see \cref{1-theorem-W_2-L_1}). Thus, if $X_t \leq_{L_1\textnormal{-ast}} Y_t$, as $t \to \infty$ and $W_t \leq_{L_1\textnormal{-ast}} Z_t$, as $t \to \infty$, then $\mathcal{W}_2$-asymptotic usual stochastic order may be useful to see the order between which pair of stochastic processes is stronger. We define the order as follows.

\begin{definition}\label{1-definition-asymptotic-usual-stochastic-order-W-2}
	Let $\left\{X_t : t \in T\right\}$ and $\left\{Y_t : t \in T\right\}$ be two stochastic processes, with respective classes of distribution functions $\left\{F_{X_t} : t \in T\right\}$ and $\left\{F_{Y_t} : t \in T\right\}$. We say that $X_t$ is smaller than $Y_t$ in $\mathcal{W}_2$-asymptotic usual stochastic order, denoted by $X_t \leq_{\mathcal{W}_2\textnormal{-ast}} Y_t$, as $t \to \infty$, if
	\begin{equation}\label{1-eq-definition-asymptotic-usual-stochastic-order-W-2}
		\lim_{t \to \infty} \int_{A_{0,t}} (F_{X_t}^{\leftarrow}(u)-F_{Y_t}^{\leftarrow}(u))^2\,du=0,
	\end{equation}
	where $A_{0,t}=\{u \in (0,1) : F_{X_t}^{\leftarrow}(u)>F_{Y_t}^{\leftarrow}(u)\}$. Also, we say that $X_t$ is equal to $Y_t$ in $\mathcal{W}_2$-asymptotic usual stochastic order and denote it by $X_t =_{\mathcal{W}_2\text{-ast}} Y_t$, as $t \to \infty$, if both $X_t \leq_{\mathcal{W}_2\textnormal{-ast}} Y_t$ and $Y_t \leq_{\mathcal{W}_2\textnormal{-ast}} X_t$ hold true, as $t \to \infty$.
\end{definition}

\ni{\it Remark.} In a similar spirit, we may define $\mathcal{W}_p$-asymptotic usual stochastic order, for every $p \geq 0$. Note that $p=0,1$ and $2$ respectively gives asymptotic usual stochastic order, $L_1$-asymptotic usual stochastic order and $\mathcal{W}_2$-asymptotic usual stochastic order. \hfill $\blacksquare$

\vspace*{0.25cm}
The next theorem shows that the $\mathcal{W}_2$-asymptotic usual stochastic order implies the $L_1$-asymptotic usual stochastic order. The proof follows from Cauchy-Schwartz inequality and \cref{1-theorem-W_1-L_1-2}.

\begin{theorem}\label{1-theorem-W_2-L_1}
	Let $\left\{X_t : t \in T\right\}$ and $\left\{Y_t : t \in T\right\}$ be two stochastic processes. If $X_t \leq_{\mathcal{W}_2\text{-ast}} Y_t$, as $t \to \infty$, then $X_t \leq_{L_1\textnormal{-ast}} Y_t$, as $t \to \infty$. \hfill $\blacksquare$
\end{theorem}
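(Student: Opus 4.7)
The plan is to reduce the claim directly to a single application of the Cauchy--Schwarz inequality together with the quantile-function characterization of $L_1$-asymptotic usual stochastic order from \cref{1-theorem-W_1-L_1-2}.

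First, I would fix $t \in T$ and work on the set $A_{0,t} = \{u \in (0,1) : F_{X_t}^{\leftarrow}(u) > F_{Y_t}^{\leftarrow}(u)\}$. Applying Cauchy--Schwarz to the functions $|F_{X_t}^{\leftarrow}(u) - F_{Y_t}^{\leftarrow}(u)|$ and $1$ on $A_{0,t}$ with respect to Lebesgue measure gives
\begin{equation*}
\int_{A_{0,t}} \left\vert F_{X_t}^{\leftarrow}(u) - F_{Y_t}^{\leftarrow}(u) \right\vert du \leq \sqrt{\mu(A_{0,t})} \cdot \sqrt{\int_{A_{0,t}} (F_{X_t}^{\leftarrow}(u) - F_{Y_t}^{\leftarrow}(u))^2\,du}.
\end{equation*}
Since $A_{0,t} \subseteq (0,1)$, we have $\mu(A_{0,t}) \leq 1$, so the right-hand side is bounded above by $\bigl(\int_{A_{0,t}} (F_{X_t}^{\leftarrow}(u) - F_{Y_t}^{\leftarrow}(u))^2\,du\bigr)^{1/2}$.

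Next, I would take the limit as $t \to \infty$. By the hypothesis $X_t \leq_{\mathcal{W}_2\textnormal{-ast}} Y_t$ (see \cref{1-definition-asymptotic-usual-stochastic-order-W-2}), the quantity under the square root tends to $0$, and therefore so does $\int_{A_{0,t}} |F_{X_t}^{\leftarrow}(u) - F_{Y_t}^{\leftarrow}(u)|\,du$. Invoking \cref{1-theorem-W_1-L_1-2}, which characterizes $X_t \leq_{L_1\textnormal{-ast}} Y_t$ by exactly this limit on $A_{0,t}$, completes the proof.

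There is no real obstacle here: the bound $\mu(A_{0,t}) \leq 1$ removes any need to separately control the size of the ``bad'' set, and the quantile-function rewrite of $L_1$-asymptotic order has already been established, so Cauchy--Schwarz alone closes the gap between a square-integrated deviation and an integrated deviation.
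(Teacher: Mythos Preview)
Your proposal is correct and follows essentially the same approach as the paper, which simply notes that the result follows from the Cauchy--Schwarz inequality together with \cref{1-theorem-W_1-L_1-2}. Your write-up just makes the one-line argument explicit, including the harmless bound $\mu(A_{0,t}) \leq 1$.
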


By combining \cref{1-theorem-ast-expectation-inequality-L-1} and \cref{1-theorem-W_2-L_1}, we obtain the next result, which shows that the $\mathcal{W}_2$-asymptotic usual stochastic order implies mean order in an asymptotic sense.

\begin{theorem}\label{1-theorem-ast-expectation-inequality-W-2}
	Let $\left\{X_t : t \in T\right\}$ and $\left\{Y_t : t \in T\right\}$ be two stochastic processes such that $X_t \leq_{\mathcal{W}_2\textnormal{-ast}} Y_t$, as $t \to \infty$. Then $\lim_{t \to \infty} E\left(X_t\right) \leq \lim_{t \to \infty} E\left(Y_t\right)$, if the limits exist.
\end{theorem}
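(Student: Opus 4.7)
The plan is to chain together two earlier results in the paper, so no new work is really required. First I would invoke \cref{1-theorem-W_2-L_1}, which states that $\mathcal{W}_2$-asymptotic usual stochastic order implies $L_1$-asymptotic usual stochastic order. Applying this to the hypothesis $X_t \leq_{\mathcal{W}_2\textnormal{-ast}} Y_t$ immediately gives $X_t \leq_{L_1\textnormal{-ast}} Y_t$, as $t \to \infty$.

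Next, I would invoke \cref{1-theorem-ast-expectation-inequality-L-1}, which asserts that if $X_t \leq_{L_1\textnormal{-ast}} Y_t$ as $t \to \infty$ and the limits $\lim_{t \to \infty} E(X_t)$ and $\lim_{t \to \infty} E(Y_t)$ exist, then $\lim_{t \to \infty} E(X_t) \leq \lim_{t \to \infty} E(Y_t)$. Combined with the conclusion of the previous step and the hypothesis that both limits exist, this gives exactly the desired inequality.

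Since the proof is a two-line composition of named theorems already proved, there is essentially no obstacle; the only subtlety to verify is that the existence-of-limits hypothesis in \cref{1-theorem-ast-expectation-inequality-L-1} is the same as the one assumed here, which it is. One could alternatively re-run the computation from the proof of \cref{1-theorem-ast-expectation-inequality-L-1} (splitting $E(X_t)=\int_{A_{0,t}}F_{X_t}^{\leftarrow}(u)\,du+\int_{(0,1)\setminus A_{0,t}}F_{X_t}^{\leftarrow}(u)\,du$ and bounding the $A_{0,t}$ piece via Cauchy–Schwarz by $(\int_{A_{0,t}}(F_{X_t}^{\leftarrow}-F_{Y_t}^{\leftarrow})^2\,du)^{1/2}$), but that would only duplicate work already subsumed by the two cited theorems.
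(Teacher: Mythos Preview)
Your proposal is correct and matches the paper's own approach exactly: the paper introduces this theorem with the sentence ``By combining \cref{1-theorem-ast-expectation-inequality-L-1} and \cref{1-theorem-W_2-L_1}, we obtain the next result,'' and gives no further argument. Your verification that the existence-of-limits hypothesis carries through unchanged is the only point worth noting, and you have handled it.
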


\ni{\it Remark.} Even when $\lim_{t \to \infty} E\left(X_t\right)$ and $\lim_{t \to \infty} E\left(Y_t\right)$ do not exist, it is easy to see that $\liminf_{t \to \infty} E\left(X_t\right) \leq \liminf_{t \to \infty} E\left(Y_t\right)$ if $\liminf_{t \to \infty} E\left(X_t\right)<\infty$ and $\limsup_{t \to \infty} E\left(X_t\right) \leq \limsup_{t \to \infty} E\left(Y_t\right)$ if $\limsup_{t \to \infty} E\left(X_t\right)<\infty$. \hfill $\blacksquare$\vs

Next we discuss some properties of the $\mathcal{W}_2$-asymptotic usual stochastic order. The following proposition shows that equality of $X_t$ and $Y_t$ in $\mathcal{W}_2$-asymptotic usual stochastic order is characterized by convergence of their $\mathcal{W}_2$ distance to $0$, as $t \to \infty$. The proof is omitted.

\begin{proposition}\label{1-proposition-equality-asymptotic-usual-stochastic-order-W-2}
	$X_t =_{\mathcal{W}_2\textnormal{-ast}} Y_t$, as $t \to \infty$ if and only if $\Vert F_{X_t}-F_{Y_t} \Vert_{\mathcal{W}_2} \to 0$, as $t \to \infty$. \hfill $\blacksquare$
\end{proposition}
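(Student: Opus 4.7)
The plan is to decompose the $\mathcal{W}_2^2$ distance according to the sign of $F_{X_t}^{\leftarrow}(u)-F_{Y_t}^{\leftarrow}(u)$ and to observe that each of the two asymptotic inequalities defining $=_{\mathcal{W}_2\textnormal{-ast}}$ controls exactly one of the resulting summands. Concretely, first I would set $A_{0,t}=\{u\in(0,1):F_{X_t}^{\leftarrow}(u)>F_{Y_t}^{\leftarrow}(u)\}$, $B_{0,t}=\{u\in(0,1):F_{X_t}^{\leftarrow}(u)<F_{Y_t}^{\leftarrow}(u)\}$, and $E_t=\{u\in(0,1):F_{X_t}^{\leftarrow}(u)=F_{Y_t}^{\leftarrow}(u)\}$. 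These three sets are measurable (the quantile functions are nondecreasing, hence Borel measurable) and partition $(0,1)$, and the integrand $(F_{X_t}^{\leftarrow}(u)-F_{Y_t}^{\leftarrow}(u))^2$ vanishes on $E_t$.

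Using this partition together with the quantile representation of the $\mathcal{W}_2$ distance recalled just before \eqref{1-eq-decomposition-W-2}, one obtains
\[
\Vert F_{X_t}-F_{Y_t}\Vert_{\mathcal{W}_2}^2
=\int_{A_{0,t}}\!\bigl(F_{X_t}^{\leftarrow}(u)-F_{Y_t}^{\leftarrow}(u)\bigr)^2\,du
+\int_{B_{0,t}}\!\bigl(F_{X_t}^{\leftarrow}(u)-F_{Y_t}^{\leftarrow}(u)\bigr)^2\,du.
\]
The first term on the right hand side is precisely the quantity whose vanishing (as $t\to\infty$) defines $X_t\leq_{\mathcal{W}_2\textnormal{-ast}}Y_t$, by \cref{1-definition-asymptotic-usual-stochastic-order-W-2}. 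For the second term, I would note that $B_{0,t}$ is obtained from $A_{0,t}$ by swapping the roles of $X_t$ and $Y_t$, so applying \cref{1-definition-asymptotic-usual-stochastic-order-W-2} with the roles reversed shows that its convergence to $0$ is equivalent to $Y_t\leq_{\mathcal{W}_2\textnormal{-ast}}X_t$, as $t\to\infty$.

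To conclude, since both summands are nonnegative, their sum tends to $0$ if and only if each does. Hence $\Vert F_{X_t}-F_{Y_t}\Vert_{\mathcal{W}_2}\to 0$ is equivalent to the simultaneous validity of $X_t\leq_{\mathcal{W}_2\textnormal{-ast}}Y_t$ and $Y_t\leq_{\mathcal{W}_2\textnormal{-ast}}X_t$, which by \cref{1-definition-asymptotic-usual-stochastic-order-W-2} is exactly $X_t=_{\mathcal{W}_2\textnormal{-ast}}Y_t$, as $t\to\infty$. There is no substantive obstacle here: the argument is a direct additive decomposition and the only mild subtlety is verifying that the sets $A_{0,t}, B_{0,t}, E_t$ are measurable, which is immediate from the monotonicity of quantile functions.
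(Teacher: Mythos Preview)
Your argument is correct and is precisely the natural proof the paper has in mind: the authors omit the proof entirely, but the decomposition you use is exactly \eqref{1-eq-decomposition-W-2}, and the conclusion follows immediately since a sum of nonnegative terms tends to zero if and only if each term does. There is nothing to add.
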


The next result shows that $\mathcal{W}_2$-asymptotic usual stochastic order is a partial order.

\begin{theorem}\label{1-theorem-transitivity-W-2-asymptotic-usual-stochastic-order}
	$\mathcal{W}_2$-asymptotic usual stochastic order is a partial order.
\end{theorem}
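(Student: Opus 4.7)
Reflexivity and antisymmetry are immediate: taking $Y_t = X_t$ gives $A_{0,t} = \emptyset$ for every $t$, so the defining integral vanishes identically; antisymmetry is built into \cref{1-definition-asymptotic-usual-stochastic-order-W-2}, since $X_t =_{\mathcal{W}_2\textnormal{-ast}} Y_t$, as $t \to \infty$, is \emph{defined} as the conjunction of the two one-sided relations. The only real content is transitivity, which I plan to prove by mimicking the proof of \cref{1-theorem-transitivity-L-1-asymptotic-usual-stochastic-order}, but working with quantile functions (as suggested by \cref{1-theorem-W_1-L_1-2}) and using an elementary quadratic inequality in place of the triangle inequality.

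Let $\{X_t\}$, $\{Y_t\}$, $\{Z_t\}$ be three stochastic processes with $X_t \leq_{\mathcal{W}_2\textnormal{-ast}} Y_t$ and $Y_t \leq_{\mathcal{W}_2\textnormal{-ast}} Z_t$, as $t \to \infty$. Write
\[
A_{0,t} = \{u \in (0,1) : F_{X_t}^{\leftarrow}(u) > F_{Y_t}^{\leftarrow}(u)\}, \quad B_{0,t} = \{u \in (0,1) : F_{Y_t}^{\leftarrow}(u) > F_{Z_t}^{\leftarrow}(u)\},
\]
and $C_{0,t} = \{u \in (0,1) : F_{X_t}^{\leftarrow}(u) > F_{Z_t}^{\leftarrow}(u)\}$. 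Exactly as in the proof of \cref{1-theorem-asymptotic-usual-stochastic-order-partial-order}, $C_{0,t} \subseteq A_{0,t} \cup B_{0,t}$, so I partition $C_{0,t} = S_1 \cup S_2 \cup S_3$ with $S_1 = C_{0,t} \cap A_{0,t} \cap B_{0,t}^c$, $S_2 = C_{0,t} \cap A_{0,t}^c \cap B_{0,t}$ and $S_3 = C_{0,t} \cap A_{0,t} \cap B_{0,t}$.

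On $S_1$ one has $F_{Y_t}^{\leftarrow}(u) \leq F_{Z_t}^{\leftarrow}(u)$, so $0 < F_{X_t}^{\leftarrow}(u) - F_{Z_t}^{\leftarrow}(u) \leq F_{X_t}^{\leftarrow}(u) - F_{Y_t}^{\leftarrow}(u)$, giving the pointwise bound $(F_{X_t}^{\leftarrow}(u) - F_{Z_t}^{\leftarrow}(u))^2 \leq (F_{X_t}^{\leftarrow}(u) - F_{Y_t}^{\leftarrow}(u))^2$. A symmetric argument on $S_2$ bounds the integrand by $(F_{Y_t}^{\leftarrow}(u) - F_{Z_t}^{\leftarrow}(u))^2$. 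On $S_3$ both differences $F_{X_t}^{\leftarrow}(u) - F_{Y_t}^{\leftarrow}(u)$ and $F_{Y_t}^{\leftarrow}(u) - F_{Z_t}^{\leftarrow}(u)$ are positive; writing their sum as $F_{X_t}^{\leftarrow}(u) - F_{Z_t}^{\leftarrow}(u)$ and applying the elementary inequality $(a+b)^2 \leq 2a^2 + 2b^2$ yields
\[
(F_{X_t}^{\leftarrow}(u) - F_{Z_t}^{\leftarrow}(u))^2 \leq 2(F_{X_t}^{\leftarrow}(u) - F_{Y_t}^{\leftarrow}(u))^2 + 2(F_{Y_t}^{\leftarrow}(u) - F_{Z_t}^{\leftarrow}(u))^2.
\]
Summing the three contributions and enlarging $S_1, S_3$ to $A_{0,t}$ and $S_2, S_3$ to $B_{0,t}$ gives
\[
\int_{C_{0,t}} (F_{X_t}^{\leftarrow}(u) - F_{Z_t}^{\leftarrow}(u))^2\,du \leq 3\int_{A_{0,t}} (F_{X_t}^{\leftarrow}(u) - F_{Y_t}^{\leftarrow}(u))^2\,du + 3\int_{B_{0,t}} (F_{Y_t}^{\leftarrow}(u) - F_{Z_t}^{\leftarrow}(u))^2\,du,
\]
and both integrals on the right tend to $0$ as $t \to \infty$ by hypothesis.

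The main (very mild) obstacle is replacing the $L_1$ triangle inequality used in \cref{1-theorem-transitivity-L-1-asymptotic-usual-stochastic-order} by an inequality appropriate for squared differences; this is handled by the elementary bound $(a+b)^2 \leq 2a^2 + 2b^2$ on the ``bad'' piece $S_3$, and by the fact that on $S_1$ and $S_2$ the difference $F_{X_t}^{\leftarrow} - F_{Z_t}^{\leftarrow}$ is dominated in absolute value by one of the two hypothesized differences, so monotonicity of $x \mapsto x^2$ on $[0,\infty)$ does the rest.
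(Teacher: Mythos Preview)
Your proof is correct and follows essentially the same route as the paper's own argument: the same partition of $C_{0,t}$ into three pieces according to membership in $A_{0,t}$ and $B_{0,t}$, the same pointwise domination on the first two pieces, the inequality $(a+b)^2 \leq 2a^2 + 2b^2$ on the third, and the same final bound with constant $3$. The only differences are cosmetic (you name the pieces $S_i$ rather than $D_i$ and are slightly more explicit about the pointwise inequalities).
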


\begin{proof}
	$\mathcal{W}_2$-asymptotic usual stochastic order is trivially {\it reflexive} and {\it antisymmetric}. Let $\left\{X_t : t \in T\right\}$, $\left\{Y_t : t \in T\right\}$ and $\left\{Z_t : t \in T\right\}$ be three stochastic processes such that $X_t \leq_{\mathcal{W}_2\textnormal{-ast}} Y_t$, as $t \to \infty$ and $Y_t \leq_{\mathcal{W}_2\textnormal{-ast}} Z_t$, as $t \to \infty$. To prove {\it transitivity}, we need to show that $X_t \leq_{\mathcal{W}_2\textnormal{-ast}} Z_t$, as $t \to \infty$. Let $A_{0,t}=\left\{u \in (0,1) : F_{X_t}^{\leftarrow}(u)>F_{Y_t}^{\leftarrow}(u)\right\}$, $B_{0,t}=\left\{u \in (0,1) : F_{Y_t}^{\leftarrow}(u)>F_{Z_t}^{\leftarrow}(u)\right\}$ and $C_{0,t}=\left\{u \in (0,1) : F_{X_t}^{\leftarrow}(u)>F_{Z_t}^{\leftarrow}(u)\right\}$. Then it can be shown that $C_{0,t} \subseteq A_{0,t} \medcup B_{0,t}$. Now, we consider the partition $C_{0,t}=D_1 \medcup D_2 \medcup D_3$, where $D_1=A_{0,t} \medcap B_{0,t}^c \medcap C_{0,t}$, $D_2=A_{0,t}^c \medcap B_{0,t} \medcap C_{0,t}$ and $D_3=A_{0,t} \medcap B_{0,t} \medcap C_{0,t}$. Then
	\begin{equation}\label{1-eq-theorem-transitivity-asymptotic-usual-stochastic-order}
		\int_{C_{0,t}} (F_{X_t}^{\leftarrow}(u)-F_{Z_t}^{\leftarrow}(u))^2\,du = \sum_{i=1}^3 \int_{D_i} (F_{X_t}^{\leftarrow}(u)-F_{Z_t}^{\leftarrow}(u))^2\,du.
	\end{equation}
	Now it is easy to see that
	\begin{equation*}
		\int_{D_1} (F_{X_t}^{\leftarrow}(u)-F_{Z_t}^{\leftarrow}(u))^2\,du \leq \int_{A_{0,t}} (F_{X_t}^{\leftarrow}(u)-F_{Y_t}^{\leftarrow}(u))^2\,du.
	\end{equation*}
	and
	\begin{equation*}
		\int_{D_2} (F_{X_t}^{\leftarrow}(u)-F_{Z_t}^{\leftarrow}(u))^2\,du \leq \int_{B_{0,t}} (F_{Y_t}^{\leftarrow}(u)-F_{Z_t}^{\leftarrow}(u))^2\,du.
	\end{equation*}
	Again if $u \in D_3$, we have
	\[
	\int_{D_3} (F_{X_t}^{\leftarrow}(u)-F_{Z_t}^{\leftarrow}(u))^2\,du \leq 2\left\{\int_{A_{0,t}} (F_{X_t}^{\leftarrow}(u)-F_{Y_t}^{\leftarrow}(u))^2\,du+\int_{B_{0,t}} (F_{Y_t}^{\leftarrow}(u)-F_{Z_t}^{\leftarrow}(u))^2\,du\right\}.
	\]
	Applying these upper bounds in \eqref{1-eq-theorem-transitivity-asymptotic-usual-stochastic-order}, we have
	\[
	\int_{C_{0,t}} (F_{X_t}^{\leftarrow}(u)-F_{Z_t}^{\leftarrow}(u))^2\,du \leq 3\left\{\int_{A_{0,t}} (F_{X_t}^{\leftarrow}(u)-F_{Y_t}^{\leftarrow}(u))^2\,du+\int_{B_{0,t}} (F_{Y_t}^{\leftarrow}(u)-F_{Z_t}^{\leftarrow}(u))^2\,du\right\}.
	\]
	The proof then follows by the hypotheses.
\end{proof}

The next result shows that $\mathcal{W}_2$-asymptotic usual stochastic order is closed under taking nondecreasing transformations under certain condition. The following lemma is required to prove the result.

\begin{lemma}\label{1-lemma-increasing-function-asymptotic-usual-stochastic-order}
	Let $f$ be a nondecreasing, right-continuous function and let $g$ be a nondecreasing, left-continuous function. Then $(f \circ g^{\rightarrow})^{\leftarrow}(u)=g(f^{\leftarrow}(u))$, for every $u \in (0,1)$.
\end{lemma}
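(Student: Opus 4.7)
The plan is to reduce the identity to two ``Galois-type'' equivalences that transform inequalities involving $f$ into inequalities involving $f^{\leftarrow}$ (using right-continuity of $f$), and inequalities involving $g^{\rightarrow}$ into inequalities involving $g$ (using left-continuity of $g$). Chaining them gives an explicit description of the set $\{y : (f \circ g^{\rightarrow})(y) \geq u\}$, from which taking the infimum yields $g(f^{\leftarrow}(u))$.

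First I would establish the equivalence
\[
f^{\leftarrow}(u) \leq z \iff f(z) \geq u,
\]
valid for any nondecreasing, right-continuous $f$ and any real $z$. The direction $(\Leftarrow)$ is immediate from the definition of the infimum. For $(\Rightarrow)$, choose $z_n \downarrow f^{\leftarrow}(u)$ with $f(z_n) \geq u$; right-continuity gives $f(f^{\leftarrow}(u)) \geq u$, and monotonicity then upgrades to $f(z) \geq u$. Next I would prove the dual equivalence for the right-continuous inverse of a nondecreasing, left-continuous $g$:
\[
g^{\rightarrow}(y) \geq x \iff g(x) \leq y.
\]
Again $(\Leftarrow)$ is direct. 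For $(\Rightarrow)$, if $g^{\rightarrow}(y) > x$ there exists $z > x$ with $g(z) \leq y$, giving $g(x) \leq y$ by monotonicity; if $g^{\rightarrow}(y) = x$, pick $z_n \uparrow x$ with $g(z_n) \leq y$ and invoke left-continuity of $g$ at $x$ to conclude $g(x) \leq y$.

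Having these two equivalences, the rest is a short chain. Applying the first with $z = g^{\rightarrow}(y)$ and then the second with $x = f^{\leftarrow}(u)$ yields
\[
f(g^{\rightarrow}(y)) \geq u \iff g^{\rightarrow}(y) \geq f^{\leftarrow}(u) \iff g(f^{\leftarrow}(u)) \leq y.
\]
Therefore $\{y \in \mathbb{R} : (f \circ g^{\rightarrow})(y) \geq u\} = [g(f^{\leftarrow}(u)),\infty)$, whose infimum is exactly $g(f^{\leftarrow}(u))$. By definition of the left-continuous inverse of $f \circ g^{\rightarrow}$ this is $(f \circ g^{\rightarrow})^{\leftarrow}(u)$, and the identity follows.

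The main obstacle I expect is in the second equivalence: the supremum in the definition of $g^{\rightarrow}$ need not be attained, so the case $g^{\rightarrow}(y) = x$ is precisely where the left-continuity hypothesis is used (to pass the inequality $g(z_n) \leq y$ along an approaching sequence to $g(x) \leq y$). If one tried to weaken either one-sided continuity assumption, this step would fail. Everything else is essentially bookkeeping with infima and suprema of monotone sets.
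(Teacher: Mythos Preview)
Your proof is correct and follows essentially the same route as the paper's. The paper also first establishes the equivalence $f(g^{\rightarrow}(y)) \geq u \iff g^{\rightarrow}(y) \geq f^{\leftarrow}(u)$ via right-continuity of $f$, and then handles the second step by proving that $(g^{\rightarrow})^{\leftarrow}=g$ for nondecreasing, left-continuous $g$, which is exactly your Galois equivalence $g^{\rightarrow}(y) \geq x \iff g(x) \leq y$ repackaged; your framing as two Galois connections is a bit more transparent, but the substance is identical.
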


\begin{theorem}\label{1-theorem-increasing-function-asymptotic-usual-stochastic-order}
	Let $\left\{X_t : t \in T\right\}$ and $\left\{Y_t : t \in T\right\}$ be two stochastic processes such that $X_t \leq_{\mathcal{W}_2\textnormal{-ast}} Y_t$, as $t \to \infty$. Also, let $\psi:\mathbb{R} \to \mathbb{R}$ be a nondecreasing, Lipschitz continuous function. Then $\psi(X_t) \leq_{\mathcal{W}_2\textnormal{-ast}} \psi(Y_t)$, as $t \to \infty$.
\end{theorem}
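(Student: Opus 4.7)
The plan is to reduce the assertion for $\psi(X_t),\psi(Y_t)$ back to the hypothesis on $X_t,Y_t$ via two ingredients: the quantile transformation identity $F_{\psi(Z)}^{\leftarrow}=\psi\circ F_Z^{\leftarrow}$, and the Lipschitz bound on the integrand. Since $\psi$ is Lipschitz, it is continuous, and I can apply \cref{1-lemma-increasing-function-asymptotic-usual-stochastic-order} with $f=F_{X_t}$ (right-continuous, nondecreasing) and $g=\psi$ (continuous, so in particular left-continuous, and nondecreasing). This gives $(F_{X_t}\circ \psi^{\rightarrow})^{\leftarrow}(u)=\psi(F_{X_t}^{\leftarrow}(u))$. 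Combining with the standard identity $F_{\psi(X_t)}(x)=F_{X_t}(\psi^{\rightarrow}(x))$ valid for $\psi$ continuous nondecreasing, I obtain $F_{\psi(X_t)}^{\leftarrow}(u)=\psi(F_{X_t}^{\leftarrow}(u))$, and likewise for $Y_t$.

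Next I would compare the sets over which the defining integrals are taken. Let $A_{0,t}=\{u\in(0,1):F_{X_t}^{\leftarrow}(u)>F_{Y_t}^{\leftarrow}(u)\}$ and $A_{0,t}^{\psi}=\{u\in(0,1):F_{\psi(X_t)}^{\leftarrow}(u)>F_{\psi(Y_t)}^{\leftarrow}(u)\}=\{u\in(0,1):\psi(F_{X_t}^{\leftarrow}(u))>\psi(F_{Y_t}^{\leftarrow}(u))\}$. Because $\psi$ is nondecreasing, $\psi(a)>\psi(b)$ forces $a>b$, so $A_{0,t}^{\psi}\subseteq A_{0,t}$.

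Letting $L$ denote a Lipschitz constant for $\psi$, I now estimate
\[
\int_{A_{0,t}^{\psi}} \bigl(F_{\psi(X_t)}^{\leftarrow}(u)-F_{\psi(Y_t)}^{\leftarrow}(u)\bigr)^2 du
= \int_{A_{0,t}^{\psi}} \bigl(\psi(F_{X_t}^{\leftarrow}(u))-\psi(F_{Y_t}^{\leftarrow}(u))\bigr)^2 du
\leq L^2 \int_{A_{0,t}} \bigl(F_{X_t}^{\leftarrow}(u)-F_{Y_t}^{\leftarrow}(u)\bigr)^2 du,
\]
using Lipschitz continuity on the integrand and the set inclusion $A_{0,t}^{\psi}\subseteq A_{0,t}$ to enlarge the domain. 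By the hypothesis $X_t\leq_{\mathcal{W}_2\text{-ast}} Y_t$, the right-hand side tends to $0$ as $t\to\infty$, which is exactly the condition in \cref{1-definition-asymptotic-usual-stochastic-order-W-2} for $\psi(X_t)\leq_{\mathcal{W}_2\text{-ast}}\psi(Y_t)$.

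The only delicate point is justifying $F_{\psi(X_t)}^{\leftarrow}=\psi\circ F_{X_t}^{\leftarrow}$, which requires continuity of $\psi$ (to ensure the sublevel set $\{x':\psi(x')\le x\}$ is closed, hence of the form $(-\infty,\psi^{\rightarrow}(x)]$) together with \cref{1-lemma-increasing-function-asymptotic-usual-stochastic-order}; everything else is a clean inclusion-plus-Lipschitz estimate. Note that Lipschitz continuity is used in an essential way—a merely differentiable-a.e. hypothesis with bounded derivative, as in the $L_1$ case (\cref{1-theorem-increasing-function-asymptotic-usual-stochastic-order-L-1}), would not suffice to control the pointwise squared difference, so Lipschitz is the natural regularity class here.
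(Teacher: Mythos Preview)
Your proof is correct and follows essentially the same approach as the paper: both use \cref{1-lemma-increasing-function-asymptotic-usual-stochastic-order} together with continuity of $\psi$ to obtain $F_{\psi(X_t)}^{\leftarrow}=\psi\circ F_{X_t}^{\leftarrow}$, then use nondecreasingness to get the inclusion $A_{0,t}^{\psi}\subseteq A_{0,t}$, and finally apply the Lipschitz bound to control the squared integrand. The only differences are cosmetic (your $L$ is the paper's $K$, your $A_{0,t}^{\psi}$ is the paper's $A_{0,t,\psi}$).
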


\begin{proof}
	First, we consider the situation where $\psi$ is nondecreasing. By continuity of $\psi$, we have $F_{\psi(X_t)}(x)=F_{X_t}(\psi^{\rightarrow}(x))$ and $F_{\psi(Y_t)}(x)=F_{Y_t}(\psi^{\rightarrow}(x))$, for every $x \in \mathbb{R}$. By \cref{1-lemma-increasing-function-asymptotic-usual-stochastic-order}, we have $F_{\psi(X_t)}^{\leftarrow}(u)=\psi(F_{X_t}^{\leftarrow}(u))$ and $F_{\psi(Y_t)}^{\leftarrow}(u)=\psi(F_{Y_t}^{\leftarrow}(u))$, for every $u \in (0,1)$. Let $A_{0,t}=\{u \in (0,1) : F_{X_t}^{\leftarrow}(u)>F_{Y_t}^{\leftarrow}(u)\}$ and $A_{0,t,\psi}=\{u \in (0,1) : F_{\psi(X_t)}^{\leftarrow}(u)>F_{\psi(Y_t)}^{\leftarrow}(u)\}$. By nondecreasingness of $\psi$, we have $A_{0,t,\psi} \subseteq A_{0,t}$. Also, since $\psi$ is Lipshcitz continuous, there exists a positive real number $K$, such that
	\[
	\int_{A_{0,t,\psi}} (F_{\psi(X_t)}^{\leftarrow}(u)-F_{\psi(Y_t)}^{\leftarrow}(u))^2\,du \leq K^2 \int_{A_{0,t}} (F_{X_t}^{\leftarrow}(u)-F_{Y_t}^{\leftarrow}(u))^2\,du.
	\]
	From the assumption that $X_t \leq_{\mathcal{W}_2\textnormal{-ast}} Y_t$, as $t \to \infty$, it follows that $\psi(X_t) \leq_{\mathcal{W}_2\textnormal{-ast}} \psi(Y_t)$, as $t \to \infty$.
\end{proof}

The following lemma, which is needed to prove \cref{1-corollary-decreasing-function-asymptotic-usual-stochastic-order}, shows that $\mathcal{W}_2$-asymptotic usual stochastic order is reversed under taking negatives of the stochastic processes.

\begin{lemma}\label{1-lemma-W-2-ast-negation}
	Let $\left\{X_t : t \in T\right\}$ and $\left\{Y_t : t \in T\right\}$ be two stochastic processes such that $X_t \leq_{\mathcal{W}_2\textnormal{-ast}} Y_t$, as $t \to \infty$. Then $-Y_t \leq_{\mathcal{W}_2\textnormal{-ast}} -X_t$, as $t \to \infty$.
\end{lemma}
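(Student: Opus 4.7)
The plan is to translate the target condition for $-Y_t$ versus $-X_t$ back to a condition on the original processes via the negation identities for quantile functions, and then appeal to the hypothesis after an almost-everywhere identification of left- and right-continuous inverses.

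First I would use the identities $F_{-Z}^{\leftarrow}(u)=-F_Z^{\rightarrow}(1-u)$ and $F_{-Z}^{\rightarrow}(u)=-F_Z^{\leftarrow}(1-u)$, valid for every $u\in(0,1)$ and any random variable $Z$ (as recalled just before \cref{1-lemma-ast-negation-2}). Writing $A_{0,t}^{-}=\{u\in(0,1):F_{-Y_t}^{\leftarrow}(u)>F_{-X_t}^{\leftarrow}(u)\}$, these identities convert the defining inequality into $F_{X_t}^{\rightarrow}(1-u)>F_{Y_t}^{\rightarrow}(1-u)$, so $A_{0,t}^{-}=\{u\in(0,1):F_{X_t}^{\rightarrow}(1-u)>F_{Y_t}^{\rightarrow}(1-u)\}$.

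Next I would perform the measure-preserving substitution $v=1-u$ inside the integral defining the $\mathcal{W}_2$-ast condition. Because the integrand is a square, the minus signs coming from the identities disappear, and the integral for $-Y_t,-X_t$ becomes
\[
\int_{A_{0,t}^{-}}\bigl(F_{-Y_t}^{\leftarrow}(u)-F_{-X_t}^{\leftarrow}(u)\bigr)^2\,du
=\int_{\widetilde{A}_{0,t}}\bigl(F_{X_t}^{\rightarrow}(v)-F_{Y_t}^{\rightarrow}(v)\bigr)^2\,dv,
\]
where $\widetilde{A}_{0,t}=\{v\in(0,1):F_{X_t}^{\rightarrow}(v)>F_{Y_t}^{\rightarrow}(v)\}$.

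The final step is to replace right-continuous inverses by left-continuous ones at the cost of a null set. Since $F_{X_t}^{\leftarrow}$ and $F_{X_t}^{\rightarrow}$ (respectively $F_{Y_t}^{\leftarrow}$ and $F_{Y_t}^{\rightarrow}$) are nondecreasing and agree off an at most countable set, the integrands $(F_{X_t}^{\rightarrow}-F_{Y_t}^{\rightarrow})^2$ and $(F_{X_t}^{\leftarrow}-F_{Y_t}^{\leftarrow})^2$ coincide almost everywhere, and $\widetilde{A}_{0,t}$ differs from $A_{0,t}=\{v:F_{X_t}^{\leftarrow}(v)>F_{Y_t}^{\leftarrow}(v)\}$ by a Lebesgue-null set (this is essentially the content of \cref{1-lemma-inverse-consistency}, upgraded from measures of the sets to the integrals over them). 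Therefore the integral displayed above equals $\int_{A_{0,t}}(F_{X_t}^{\leftarrow}(v)-F_{Y_t}^{\leftarrow}(v))^2\,dv$, which tends to $0$ as $t\to\infty$ by the hypothesis $X_t\leq_{\mathcal{W}_2\textnormal{-ast}}Y_t$. This yields $-Y_t\leq_{\mathcal{W}_2\textnormal{-ast}}-X_t$.

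There is no real technical obstacle; the argument is a careful bookkeeping exercise. The only subtlety is the systematic swap between the left- and right-continuous inverses, which is what forces both identities $F_{-Z}^{\leftarrow}(u)=-F_Z^{\rightarrow}(1-u)$ and $F_{-Z}^{\rightarrow}(u)=-F_Z^{\leftarrow}(1-u)$ into the argument and makes the almost-everywhere identification of the two inverses essential at the last step.
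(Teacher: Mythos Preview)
Your proposal is correct and follows essentially the same route as the paper: both arguments use the negation identities to rewrite $A_{0,t}^{-}$ in terms of right-continuous inverses at $1-u$, then exploit the fact that $F_{X_t}^{\leftarrow}$ and $F_{X_t}^{\rightarrow}$ (and likewise for $Y_t$) agree off an at most countable set to pass back to the left-continuous inverses and land on the hypothesis. The paper organizes this slightly differently---it removes the countable exceptional set $K_t=T_{F_{X_t}}\cup T_{F_{Y_t}}$ first and then uses the implication $u\in A_{0,t}'\setminus K_t\Rightarrow 1-u\in A_{0,t}$ to obtain an inequality rather than your exact equality---but the substance is identical.
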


\begin{proof}
	Define $A_{0,t}'=\{u \in (0,1) : F_{-Y_t}^{\leftarrow}(u)>F_{-X_t}^{\leftarrow}(u)\}$, for every $t \in T$.
	It is easy to see that
	\[
	A_{0,t}'=\left\{u \in (0,1) : F_{X_t}^{\rightarrow}(1-u)>F_{Y_t}^{\rightarrow}(1-u)\right\}.
	\]
	Let $K_t=T_{F_{X_t}} \medcup T_{F_{Y_t}}$. Now, for every $t \in T$, we have
	\begin{align*}\label{1-eq-proposition-W-2-ast-negation}
		\int_{A_{0,t}'} \left\{ F_{-X_t}^{\leftarrow}(u)-F_{-Y_t}^{\leftarrow}(u) \right\}^2 du &=\int_{A_{0,t}' \setminus K_t} \left\{ F_{X_t}^{\leftarrow}(1-u)-F_{Y_t}^{\leftarrow}(1-u) \right\}^2 du\\
		&\leq \int_{A_{0,t}} \left\{ F_{X_t}^{\leftarrow}(u)-F_{Y_t}^{\leftarrow}(u) \right\}^2 du,
	\end{align*}
	where the equality follows since $K_t$ is countable and the inequality holds due to the fact that $u \in A_{0,t}' \setminus K_t$ implies $1-u \in A_{0,t}$. Hence the proof follows by hypothesis.
\end{proof}

Noting that $-\psi$ is nondecreasing if $\psi$ is nonincreasing, we immediately have the following corollary from \cref{1-lemma-W-2-ast-negation}.

\begin{corollary}\label{1-corollary-decreasing-function-asymptotic-usual-stochastic-order}
	Let $\left\{X_t : t \in T\right\}$ and $\left\{Y_t : t \in T\right\}$ be two stochastic processes such that $X_t \leq_{\mathcal{W}_2\textnormal{-ast}} Y_t$, as $t \to \infty$. Also, let $\psi:\mathbb{R} \to \mathbb{R}$ be a nonincreasing, Lipschitz continuous function. Then $\psi(Y_t) \leq_{\mathcal{W}_2\textnormal{-ast}} \psi(X_t)$, as $t \to \infty$. \hfill $\blacksquare$
\end{corollary}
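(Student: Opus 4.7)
The plan is to derive this corollary by combining \cref{1-lemma-W-2-ast-negation} with \cref{1-theorem-increasing-function-asymptotic-usual-stochastic-order}. Given $\psi$ nonincreasing and Lipschitz continuous with some constant $K>0$, I set $\phi := -\psi$, which is then nondecreasing, and Lipschitz continuous with the same constant $K$ (immediate from $|\phi(x)-\phi(y)| = |\psi(x)-\psi(y)| \leq K|x-y|$). This observation converts the hypothesis on $\psi$ into the hypothesis required by \cref{1-theorem-increasing-function-asymptotic-usual-stochastic-order}.

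Next, I would apply \cref{1-theorem-increasing-function-asymptotic-usual-stochastic-order} to the pair $\{X_t\},\{Y_t\}$ using the nondecreasing, Lipschitz function $\phi$. Since $X_t \leq_{\mathcal{W}_2\textnormal{-ast}} Y_t$, as $t \to \infty$, by hypothesis, the theorem yields $\phi(X_t) \leq_{\mathcal{W}_2\textnormal{-ast}} \phi(Y_t)$, as $t \to \infty$, that is,
\[
-\psi(X_t) \leq_{\mathcal{W}_2\textnormal{-ast}} -\psi(Y_t), \quad \text{as } t \to \infty.
\]
Finally, I would apply \cref{1-lemma-W-2-ast-negation} to the stochastic processes $\{-\psi(X_t) : t \in T\}$ and $\{-\psi(Y_t) : t \in T\}$ to reverse the order upon further negation, obtaining $-(-\psi(Y_t)) \leq_{\mathcal{W}_2\textnormal{-ast}} -(-\psi(X_t))$, i.e., $\psi(Y_t) \leq_{\mathcal{W}_2\textnormal{-ast}} \psi(X_t)$, as $t \to \infty$, which is the desired conclusion.

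There is no substantial obstacle here, since both auxiliary results are already in place and the argument is purely formal — this is why the statement is phrased as a corollary. The only subtlety worth flagging is that the Lipschitz property is preserved under negation, so the hypotheses of \cref{1-theorem-increasing-function-asymptotic-usual-stochastic-order} really are met by $\phi$; everything else is a direct two-step application of previously established results.
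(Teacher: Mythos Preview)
Your proposal is correct and matches the paper's approach: the paper likewise notes that $-\psi$ is nondecreasing when $\psi$ is nonincreasing, applies \cref{1-theorem-increasing-function-asymptotic-usual-stochastic-order}, and then invokes \cref{1-lemma-W-2-ast-negation} to reverse the order. You have simply spelled out the two-step argument in full, including the preservation of the Lipschitz constant under negation.
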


The next corollary follows from \cref{1-theorem-ast-expectation-inequality-W-2} and \cref{1-theorem-increasing-function-asymptotic-usual-stochastic-order}.

\begin{corollary}
	If $X_t \leq_{\mathcal{W}_2\textnormal{-ast}} Y_t$, as $t \to \infty$, then $\lim_{t \to \infty} E(\psi(X_t)) \leq \lim_{t \to \infty} E(\psi(Y_t))$, for every nondecreasing, Lipschitz continuous function $\psi:\mathbb{R} \to \mathbb{R}$, when $\lim_{t \to \infty} E(\psi(X_t))$ exists.
\end{corollary}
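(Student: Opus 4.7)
The plan is a direct two-step chain of previously established results. First, because $\psi$ is assumed to be nondecreasing and Lipschitz continuous, the hypothesis $X_t \leq_{\mathcal{W}_2\textnormal{-ast}} Y_t$, as $t \to \infty$, puts us exactly in the setup of \cref{1-theorem-increasing-function-asymptotic-usual-stochastic-order}. Applying that theorem yields $\psi(X_t) \leq_{\mathcal{W}_2\textnormal{-ast}} \psi(Y_t)$, as $t \to \infty$. Second, I would feed the transformed processes $\{\psi(X_t) : t \in T\}$ and $\{\psi(Y_t) : t \in T\}$ into \cref{1-theorem-ast-expectation-inequality-W-2}, whose conclusion is precisely the asymptotic mean inequality $\lim_{t \to \infty} E(\psi(X_t)) \leq \lim_{t \to \infty} E(\psi(Y_t))$ whenever the relevant limits exist.

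There is no substantive obstacle---each hypothesis of the chained theorems is supplied verbatim by the corollary's own assumptions, so the proof reduces to a one-line citation chain. The only point warranting a brief comment is the asymmetric existence clause in the statement (only $\lim_{t\to\infty} E(\psi(X_t))$ is assumed to exist). This is handled by the remark following \cref{1-theorem-ast-expectation-inequality-W-2}: when the limit on the right side does not exist, one still obtains $\liminf_{t \to \infty} E(\psi(Y_t)) \geq \lim_{t \to \infty} E(\psi(X_t))$ provided the left side is finite, and in particular the inequality asserted by the corollary holds whenever both limits exist.
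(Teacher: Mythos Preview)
Your proposal is correct and matches the paper's own argument exactly: the paper states that the corollary follows from \cref{1-theorem-ast-expectation-inequality-W-2} and \cref{1-theorem-increasing-function-asymptotic-usual-stochastic-order}, which is precisely the two-step chain you describe. Your added remark on the asymmetric existence clause is a reasonable clarification that the paper leaves implicit.
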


\ni{\it Remark.} It follows from \cref{1-counterexample-ast-convergence-in probability-L-1} and \cref{1-theorem-W_2-L_1} that the assumptions of \cref{1-theorem-ast-convergence-in probability} are not sufficient to imply that $X_t \leq_{\mathcal{W}_2\textnormal{-ast}} Y_t$, as $t \to \infty$.\vs

\section{Applications}\label{1-section-applications}

For any right-continuous distortion function $\phi$ and a cdf $F$, $\phi(F)$ is again a cdf, often called a distorted cdf. Let $T \subseteq \mathbb{R}$ be unbounded above and let $\left\{X_t : t \in T\right\}$ and $\left\{Y_t : t \in T\right\}$ be two stochastic processes. Assume that there exists a class of continuous distortion functions $\left\{\phi_t : t \in T\right\}$ such that $F_{X_t}(x)=\phi_t(F_X(x))$ and $F_{Y_t}(x)=\phi_t(F_Y(x))$ for every $x \in \mathbb{R}$, where $F_X$ and $F_Y$ are two baseline distributions. Let $A_0=\{u \in (0,1) : F_X^{\leftarrow}(u)>F_Y^{\leftarrow}(u)\}$ and $A_{0,t}=\{u \in (0,1) : F_{X_t}^{\leftarrow}(u)>F_{Y_t}^{\leftarrow}(u)\}$, for every $t \in T$.

\begin{lemma}\label{1-lemma-phi-F-inverse-phi-right-continuous}
	Let $\phi$ be a distortion function and let $F$ be a cdf. If $\phi$ is left-continuous, then $(\phi \circ F)^{\rightarrow}(u)=F^{\rightarrow}(\phi^{\rightarrow}(u))$, for every $u \in (0,1)$ and if $\phi$ is right-continuous, then $(\phi \circ F)^{\leftarrow}(u)=F^{\leftarrow}(\phi^{\leftarrow}(u))$, for every $u \in (0,1)$. \hfill $\blacksquare$
\end{lemma}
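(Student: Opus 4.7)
The plan is to prove each of the two identities by showing that both sides have the same characterization in terms of the one-sided limits of the underlying nondecreasing function. I would rely on the following standard facts, which follow directly from the definitions of $\inf$ and $\sup$: for any nondecreasing $g$ and every $x \in \mathbb{R}$,
\[
g^{\rightarrow}(u) \geq x \iff u \geq g(x-), \qquad g^{\leftarrow}(u) \leq x \iff u \leq g(x+).
\]

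For the first identity, fix $u \in (0,1)$ and $x \in \mathbb{R}$. Since $F$ is nondecreasing, $F(t) \uparrow F(x-)$ as $t \uparrow x$; composing with the left-continuity of $\phi$ gives $(\phi \circ F)(x-) = \phi(F(x-))$. Hence the first characterization above yields $(\phi \circ F)^{\rightarrow}(u) \geq x \iff u \geq \phi(F(x-))$. On the right-hand side, applying the same characterization first to $F$ and then to $\phi$ (using once more that $\phi$ is left-continuous, so $\phi(F(x-)-) = \phi(F(x-))$) gives
\[
F^{\rightarrow}(\phi^{\rightarrow}(u)) \geq x \iff \phi^{\rightarrow}(u) \geq F(x-) \iff u \geq \phi(F(x-)).
\]
Since this equivalence holds for every $x \in \mathbb{R}$, the two (extended real) numbers $(\phi \circ F)^{\rightarrow}(u)$ and $F^{\rightarrow}(\phi^{\rightarrow}(u))$ must coincide.

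The second identity is entirely symmetric, using the other characterization $g^{\leftarrow}(u) \leq x \iff u \leq g(x+)$. Right-continuity of $F$ (a cdf) gives $F(t) \downarrow F(x)$ as $t \downarrow x$, and combined with the right-continuity of $\phi$ one obtains $(\phi \circ F)(x+) = \phi(F(x))$, so $(\phi \circ F)^{\leftarrow}(u) \leq x \iff u \leq \phi(F(x))$. Applying the characterization twice more, first to $F$ and then to $\phi$, the right-hand side becomes $F^{\leftarrow}(\phi^{\leftarrow}(u)) \leq x \iff \phi^{\leftarrow}(u) \leq F(x) \iff u \leq \phi(F(x))$, giving equality as before.

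The one point requiring care is the behaviour at the boundary: the characterizations of $\phi^{\rightarrow}$ and $\phi^{\leftarrow}$ evaluated at $F(x-)$ or $F(x)$ need to be interpreted correctly when these arguments equal $0$ or $1$. This can be handled painlessly by extending $\phi$ to all of $\mathbb{R}$ as $0$ on $(-\infty,0)$ and $1$ on $(1,\infty)$, which preserves both monotonicity and the prescribed one-sided continuity, so that the standard inverse characterizations apply uniformly; alternatively, one verifies directly that in these edge cases both sides of each identity collapse to the same trivially-satisfied statement. Once this is settled, the proof is just the short chain of if-and-only-ifs above.
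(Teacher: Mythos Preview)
Your proof is correct but proceeds differently from the paper. The paper argues by proving two separate inequalities: for the left-continuous case, it observes that $F(x)\leq\phi^{\rightarrow}(u)$ together with left-continuity of $\phi$ forces $\phi(F(x))\leq u$, which yields $(\phi\circ F)^{\rightarrow}(u)\geq F^{\rightarrow}(\phi^{\rightarrow}(u))$; it then notes that the reverse inequality $(\phi\circ F)^{\rightarrow}(u)\leq F^{\rightarrow}(\phi^{\rightarrow}(u))$ holds for \emph{every} distortion function and cdf, with no continuity assumption. The right-continuous case is handled symmetrically. Your approach instead proves equality in one stroke, by showing that both sides satisfy the same pointwise characterization $\{\,\cdot\,\}\geq x\iff u\geq\phi(F(x-))$ via the standard identities $g^{\rightarrow}(u)\geq x\iff u\geq g(x-)$ and $g^{\leftarrow}(u)\leq x\iff u\leq g(x+)$. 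The paper's decomposition has the small conceptual advantage of isolating precisely which inequality needs the continuity hypothesis, while your argument is cleaner in that it avoids splitting into two directions; it does, however, invoke the one-sided continuity of $\phi$ twice (once for $(\phi\circ F)(x-)=\phi(F(x-))$ and once for $\phi(F(x-)-)=\phi(F(x-))$), and requires the boundary discussion you flagged when $F(x-)\in\{0,1\}$.
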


The next lemma follows from \cref{1-lemma-phi-F-inverse-phi-right-continuous} and the observations that $\phi(\phi^{\leftarrow}(u))=u$, for every $u \in (0,1)$ if $\phi$ is continuous and $\phi^{\leftarrow}(\phi(u))=u$, for every $u \in (0,1)$, if $\phi$ is strictly increasing.

\begin{lemma}\label{1-lemma-phi-F-phi-G-general-phi-right-continuous-phi-continuous}
	Let $F$ and $G$ be two cdfs and let $\phi$ be a distortion function. Let us define $A_0=\{u \in (0,1) : F^{\leftarrow}(u)>G^{\leftarrow}(u)\}$ and $A_{0,\phi}=\{u \in (0,1) : (\phi \circ F)^{\leftarrow}(u)>(\phi \circ G)^{\leftarrow}(u)\}$. Then $A_{0,\phi} \subseteq \phi\left(A_0\right)$ if $\phi$ is continuous and $A_{0,\phi} \supseteq \phi\left(A_0\right)$ if $\phi$ is strictly increasing. \hfill $\blacksquare$
\end{lemma}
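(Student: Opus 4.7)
The plan is to exploit Lemma 5.2 to convert the defining inequalities of $A_{0,\phi}$ into inequalities between $F^{\leftarrow}$ and $G^{\leftarrow}$ evaluated at $\phi^{\leftarrow}(u)$, and then to use the elementary identities $\phi(\phi^{\leftarrow}(u))=u$ (which requires continuity of $\phi$) and $\phi^{\leftarrow}(\phi(v))=v$ (which requires strict monotonicity of $\phi$) to pass between arguments of the form $u$ and $\phi^{\leftarrow}(u)$ or $v$ and $\phi(v)$.

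For the first containment, assume $\phi$ is continuous, so in particular right-continuous. By Lemma 5.2 applied to both $F$ and $G$,
\[
(\phi \circ F)^{\leftarrow}(u) = F^{\leftarrow}(\phi^{\leftarrow}(u)), \qquad (\phi \circ G)^{\leftarrow}(u) = G^{\leftarrow}(\phi^{\leftarrow}(u)),
\]
for every $u \in (0,1)$. Hence $u \in A_{0,\phi}$ forces $F^{\leftarrow}(\phi^{\leftarrow}(u)) > G^{\leftarrow}(\phi^{\leftarrow}(u))$, i.e. $\phi^{\leftarrow}(u) \in A_0$. Since $\phi$ is continuous with $\phi(0)=0$ and $\phi(1)=1$, the intermediate value theorem gives $\phi(\phi^{\leftarrow}(u)) = u$ for all $u \in (0,1)$, and so $u = \phi(\phi^{\leftarrow}(u)) \in \phi(A_0)$, which proves $A_{0,\phi} \subseteq \phi(A_0)$.

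For the reverse containment, assume $\phi$ is strictly increasing (no continuity is needed here, and I would not try to route the argument through Lemma 5.2). Take $u \in \phi(A_0)$, so $u = \phi(v)$ with $v \in A_0$, i.e.\ $F^{\leftarrow}(v) > G^{\leftarrow}(v)$. Strict monotonicity of $\phi$ yields the equivalence $\phi(F(x)) \geq u \iff F(x) \geq v$ for every $x \in \mathbb{R}$, and therefore
\[
(\phi \circ F)^{\leftarrow}(u) = \inf\{x : \phi(F(x)) \geq u\} = \inf\{x : F(x) \geq v\} = F^{\leftarrow}(v),
\]
and similarly $(\phi \circ G)^{\leftarrow}(u) = G^{\leftarrow}(v)$. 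Combining gives $(\phi \circ F)^{\leftarrow}(u) > (\phi \circ G)^{\leftarrow}(u)$, so $u \in A_{0,\phi}$.

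There is no real obstacle: each part is a bookkeeping argument around the quantile identities in Lemma 5.2. The only small point of care is that the two halves use disjoint features of $\phi$ — surjectivity onto $(0,1)$ (from continuity) for the first inclusion, and injectivity with the sharp equivalence $\phi(a) \geq \phi(b) \iff a \geq b$ (from strict monotonicity) for the second — and that one should avoid invoking Lemma 5.2 in the second inclusion unless $\phi$ is additionally assumed right-continuous.
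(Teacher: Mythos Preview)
Your proof is correct and follows essentially the same route as the paper, which derives the lemma from \cref{1-lemma-phi-F-inverse-phi-right-continuous} together with the identities $\phi(\phi^{\leftarrow}(u))=u$ when $\phi$ is continuous and $\phi^{\leftarrow}(\phi(v))=v$ when $\phi$ is strictly increasing. Your direct verification of $(\phi\circ F)^{\leftarrow}(\phi(v))=F^{\leftarrow}(v)$ from strict monotonicity alone, rather than via \cref{1-lemma-phi-F-inverse-phi-right-continuous}, is a small but genuine improvement, since that lemma as stated requires one-sided continuity of $\phi$, which is not part of the hypothesis in the second inclusion.
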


By \cref{1-lemma-phi-F-phi-G-general-phi-right-continuous-phi-continuous}, we have $\mu\left(A_{0,t}\right) \leq \mu\left(\phi_t\left(A_0\right)\right)$, for every $t \in T$. Thus, a sufficient condition for $X_t \leq_{\textnormal{ast}} Y_t$, as $t \to \infty$ to hold, is given by
\begin{equation}\label{1-eq-ast-sufficient-condition}
	\lim_{t \to \infty} \mu\left(\phi_t\left(A_0\right)\right)=0.
\end{equation}

Using \cref{1-lemma-phi-F-inverse-phi-right-continuous}, we have $F_{X_t}^{\leftarrow}(u)=F_X^{\leftarrow}(\phi_t^{\leftarrow}(u))$ and $F_{Y_t}^{\leftarrow}(u)=F_Y^{\leftarrow}(\phi_t^{\leftarrow}(u))$, for every $u \in (0,1)$ and $t \in T$. Also, if $u \in A_{0,t}$, then $\phi_t^{\leftarrow}(u) \in A_0$. Again, $\phi_t^{\leftarrow}(u)=v$ implies that $u=\phi_t(v)$, by continuity of $\phi_t$. Hence, for every $t \in T$, we have
\begin{equation*}
\int_{A_{0,t}} \left\vert F_{X_t}^{\leftarrow}(u)-F_{Y_t}^{\leftarrow}(u) \right\vert du \leq \int_{A_0} \left\vert F_X^{\leftarrow}(v)-F_Y^{\leftarrow}(v) \right\vert d\phi_t(v).
\end{equation*}
Assume that $\phi_t$ is differentiable for every $t \in T$. Then it is easy to see that
\begin{equation*}
\int_{A_{0,t}} \left\vert F_{X_t}^{\leftarrow}(u)-F_{Y_t}^{\leftarrow}(u) \right\vert du \leq \mathcal{W}_1\left(F_X,F_Y\right) \sup_{v \,\in A_0} \phi_t'(v).
\end{equation*}
In a similar spirit, it can be shown that
\begin{equation*}
	\int_{A_{0,t}} \left( F_{X_t}^{\leftarrow}(u)-F_{Y_t}^{\leftarrow}(u) \right)^2 du \leq \mathcal{W}_2^2\left(F_X,F_Y\right) \sup_{v \,\in A_0} \phi_t'(v).
\end{equation*}
Now, $\mathcal{W}_1\left(F_X,F_Y\right)<\infty$ if $X$ and $Y$ have finite first order moments. Thus, $X_t \leq_{L_1\textnormal{-ast}} Y_t$, as $t \to \infty$ holds, if $E\vert X \vert<\infty$, $E\vert Y \vert<\infty$ and
\begin{equation}\label{1-eq-L-1-sufficient-condition}
	\lim_{t \to \infty} \sup_{v \,\in A_0} \phi_t'(v)=0.
\end{equation}
Again, $\mathcal{W}_2\left(F_X,F_Y\right)<\infty$ if $X$ and $Y$ have finite second order moments. Hence, $X_t \leq_{\mathcal{W}_2\textnormal{-ast}} Y_t$, as $t \to \infty$ holds, if $E\vert X \vert^2<\infty$, $E\vert Y \vert^2<\infty$ and \eqref{1-eq-L-1-sufficient-condition} holds.

\subsection*{Mixtures of order statistics with possibly non-integral sample size}\label{1-subsection-mixture}
Based on Ferguson's Dirichlet process, \citet{S_1977} generalized the notion of order statistics to introduce an order statistic process, consisting of a continuum of fractional order statistics from any sample of possibly non-integral sample size. Let $\alpha>0$ and $0<r<\alpha+1$, where $r$ may depend on $\alpha$. We define $\lambda=\lim_{\alpha \to \infty} \frac{r}{\alpha+1}$. The $r$th order statistic from a sample of size $\alpha$ from a probability distribution with cdf $F_X$, denoted by $X_{r:\alpha}$, has the cdf $\phi_{r,\alpha}(F_X(x))$, where
\begin{equation}\label{1-eq-phi-r-alpha-non-integral}
\phi_{r,\alpha}(u)=\frac{1}{B(r,\alpha-r+1)} \int_0^u v^{r-1}(1-v)^{\alpha-r}\,dv,
\end{equation}
for every $u \in [0,1]$. Let $\beta_{r,\alpha-r+1}$ be a random variable that follows the Beta distribution with parameters $r$ and $\alpha-r+1$. Clearly, $\phi_{r,\alpha}(u)=P\{\beta_{r,\alpha-r+1} \leq u\}$, for every $u \in [0,1]$. Let us fix $\epsilon>0$. Then by Chebyshev's inequality, we have
\begin{equation}\label{1-equation-order-statistics-non-integral-sample-size}
P\left\{\left\vert \beta_{r,\alpha-r+1}-\frac{r}{\alpha+1} \right\vert \geq \frac{\epsilon}{2}\right\} \leq \frac{4r(\alpha-r+1)}{(\alpha+1)^2(\alpha+2)\epsilon^2} < \frac{4}{(\alpha+2)\epsilon^2}.
\end{equation}
Now, there exists $\alpha_0>0$, such that $\vert \frac{r}{\alpha+1}-\lambda \vert<\frac{\epsilon}{2}$, whenever $\alpha \geq \alpha_0$. Then, for every $\alpha \geq \alpha_0$, we have
\begin{equation}\label{1-equation-order-statistics-non-integral-sample-size-2}
P\left\{\left\vert \beta_{r,\alpha-r+1}-\lambda \right\vert \geq \epsilon\right\} \leq P\left\{\left\vert \beta_{r,\alpha-r+1}-\frac{r}{\alpha+1} \right\vert \geq \frac{\epsilon}{2}\right\}.
\end{equation}
By \eqref{1-equation-order-statistics-non-integral-sample-size}, the right hand side of \eqref{1-equation-order-statistics-non-integral-sample-size-2} converges to $0$, as $\alpha \to \infty$. Hence, it follows that $\lim_{\alpha \to \infty} P\left\{\left\vert \beta_{r,\alpha-r+1}-\lambda \right\vert \geq \epsilon\right\}=0$. Consequently, as $\alpha \to \infty$, $P\{\beta_{r,\alpha-r+1} \leq u\}$ goes to $0$ (resp. $1$) if $u<\lambda$ (resp. $u>\lambda$). Also, note that $\phi_{r,\alpha}(0)=0$ and $\phi_{r,\alpha}(1)=1$, for every $\alpha>0$. Hence,
\begin{equation}\label{1-eq-lim-phi-n-order-statistics-non-integral-sample-size}
\lim_{\alpha \to \infty} \phi_{r,\alpha}(u)=\begin{cases*}
0, & if  $u<\lambda$ or $u=0$; \\
1, & if $u>\lambda$ or $u=1$.
\end{cases*}
\end{equation}

If $0<\lambda<1$, the limit at the point $u=\lambda$ can be any real number in the interval $[0,1]$, depending on the direction, as well as the rate, at which $r/(\alpha+1)$ converges to $\lambda$, as $\alpha \to \infty$. We do not discuss it further, as we only require the limit at the points $u<\lambda$ and $u>\lambda$, for our purpose.\vs

\ni{\it Remark.} If $\alpha$ is a positive integer and $r \in \{1,2,\ldots,\alpha\}$, then \eqref{1-eq-phi-r-alpha-non-integral} can be written as
\[
\phi_{r,\alpha}(u)=\sum_{i=r}^\alpha \binom{\alpha}{i} u^i (1-u)^{\alpha-i},
\]
for every $u \in [0,1]$ and the cdf of the $r$th order statistic, $X_{r:\alpha}$, is given by $F_{X_{r:\alpha}}(x)=\phi_{r,\alpha}(F_X(x))$, for every $x \in \mathbb{R}$. \hfill $\blacksquare$\vs

The cdf of a mixture of finitely many distributions with cdf $F_1,F_2,\ldots,F_p$ is given by $F(x)=\sum_{i=1}^p \alpha_i F_i(x)$, where $\alpha_i \geq 0$ for every $i \in \{1,\ldots,p\}$ and $\sum_{i=1}^p \alpha_i=1$. In statistics, mixture distributions are useful in modeling data coming from a population consisting of different subpopulations. It also has extensive applications in cluster analysis (\citet{MB_1988}), text classification (\citet{JV_2002}), speech recognition (\citet{RQD_2000}), disease mapping (\citet{SB_1993}), meta analysis (\citet{LMR_2003}) etc. In reliability theory, the distribution function of the lifetime $T$ of a coherent system with $n$ iid components (with lifetimes $X_1,X_2,\ldots,X_n$) have the following representation as a mixture of the order statistics $X_{1:n},X_{2:n},\ldots,X_{n:n}$.
\[
F_T(x)=\sum_{i=1}^n s_i F_{X_{i:n}}(x),
\]
for every $x \in \mathbb{R}$, where $(s_1,s_2,\ldots,s_n)$ is the signature vector, which is a distribution-free characterization of the system's design (see \citet{S_2007}). In this section, we are interested in asymptotic stochastic comparison of finite mixtures of order statistics coming from two homogeneous samples.\vs

Let $F_X$ and $F_Y$ be two cdfs. Let $\alpha>0$ and $s \in \mathbb{N}$. Let us choose $r_1,r_2,\ldots,r_s$ (which may depend on $\alpha$) such that $0<r_1<r_2<\cdots<r_s<\alpha+1$. For every $i \in\{1,2,\ldots,s\}$, we define $\lambda_i=\lim_{\alpha \to \infty} \frac{r_i}{\alpha+1}$ and assume that $\lambda_1<\lambda_2<\cdots<\lambda_s$. Also, let $\boldsymbol{w}=(w_1,w_2,\ldots,w_s) \in (0,1)^s$ with $\sum_{i=1}^s w_i=1$. Let us define the mixtures of order statistics (characterized by $\boldsymbol{w}$ and $\boldsymbol{r}$) from the respective random samples (of size $\alpha$) from $F_X$ and $F_Y$ as
\[
F_{X_{\boldsymbol{w},\boldsymbol{r},\alpha}}(x)=\sum_{i=1}^s w_i F_{X_{r_i:\alpha}}(x),\,\, F_{Y_{\boldsymbol{w},\boldsymbol{r},\alpha}}(x)=\sum_{i=1}^s w_i F_{Y_{r_i:\alpha}}(x),
\]
for every $x \in \mathbb{R}$. We wish to examine asymptotic stochastic order between the random processes, generated from the random variables with respective distribution functions $F_{X_{\boldsymbol{w},\boldsymbol{r},\alpha}}$ and $F_{Y_{\boldsymbol{w},\boldsymbol{r},\alpha}}$, as $\alpha \to \infty$. Now, we have, for every $x \in \mathbb{R}$,
\[
F_{X_{\boldsymbol{w},\boldsymbol{r},\alpha}}(x)=\sum_{i=1}^s w_i \phi_{r_i,\alpha}(F_X(x)),
\]
where $\phi_{r_i,\alpha}$ is defined similarly as $\phi_{r,\alpha}$, as in \eqref{1-eq-phi-r-alpha-non-integral}. We can write $F_{X_{\boldsymbol{w},\boldsymbol{r},\alpha}}(x)=\phi_{\boldsymbol{w},\boldsymbol{r},\alpha}(F_X(x))$, where 
\begin{equation}\label{1-eq-mixture-phi-n-alpha-gamma-n}
\phi_{\boldsymbol{w},\boldsymbol{r},\alpha}(u)=\sum_{i=1}^s w_i\,\phi_{r_i,\alpha}(u),
\end{equation}
for every $u \in [0,1]$, where $\phi_{r_i,\alpha}$ is defined similar to $\phi_{r,\alpha}$. From \eqref{1-eq-lim-phi-n-order-statistics-non-integral-sample-size}, for every $i=1,2,\ldots,s$, we have
\begin{equation}\label{1-eq-lim-phi-n-order-statistics-r-i}
\lim_{\alpha \to \infty} \phi_{r_i,\alpha}(u)=\begin{cases*}
0, & if  $u<\lambda_i$ or $u=0$; \\
1, & if $u>\lambda_i$ or $u=1$.
\end{cases*}
\end{equation}
Then, from \eqref{1-eq-mixture-phi-n-alpha-gamma-n} and \eqref{1-eq-lim-phi-n-order-statistics-r-i}, we have
\begin{equation}\label{1-eq-lim-phi-n-order-statistics-mixture}
\lim_{\alpha \to \infty} \phi_{\boldsymbol{w},\boldsymbol{r},\alpha}(u)=\begin{cases*}
0, & if  $u<\lambda_1$ or $u=0$;\\
\sum_{j=1}^i w_j, & if $\lambda_i<u<\lambda_{i+1}$,\,\, $i=1,2,\ldots,s-1$;\\
1, & if $u>\lambda_s$ or $u=1$.
\end{cases*}
\end{equation}
If $0<\lambda_i<1$, for some $i \in \{1,2,\ldots,s\}$, the limit of $\phi_{\boldsymbol{w},\boldsymbol{r},\alpha}$ at the point $u=\lambda_i$ can be any real number between $\sum_{j=1}^{i-1} w_j$ and $\sum_{j=1}^i w_j$, depending on the direction, as well as the rate, at which $r_i/\alpha$ converges to $\lambda_i$, as $\alpha \to \infty$. We do not discuss it further, as the limits of $\phi_{\boldsymbol{w},\boldsymbol{r},\alpha}$ at the points $u \in \{\lambda_1,\lambda_2,\ldots,\lambda_s\} \cap (0,1)$ are not required for our purpose.\vs

Under the above setup, we have the following result, which provides sufficient conditions for asymptotic usual stochastic order between $X_{\boldsymbol{w},\boldsymbol{r},\alpha}$ and $Y_{\boldsymbol{w},\boldsymbol{r},\alpha}$, as $\alpha \to \infty$ to hold.

\begin{theorem}\label{1-theorem-ast-mixture}
	Let $X_{\boldsymbol{w},\boldsymbol{r},\alpha}$ and $Y_{\boldsymbol{w},\boldsymbol{r},\alpha}$ be two random variables with respective distribution functions $F_{X_{\boldsymbol{w},\boldsymbol{r},\alpha}}$ and $F_{Y_{\boldsymbol{w},\boldsymbol{r},\alpha}}$, for every $\alpha>0$. Assume that each $\lambda_i$ is separated from $A_0$ by a positive distance, i.e.
	\[
	\min_{i\,\in\,\{1,2,\ldots,s\}}{\textnormal{dist}\left(\lambda_i,A_0\right)}>0.
	\]
	Then $X_{\boldsymbol{w},\boldsymbol{r},\alpha} \leq_{\textnormal{ast}} Y_{\boldsymbol{w},\boldsymbol{r},\alpha}$, as $\alpha \to \infty$. Furthermore, $X_{\boldsymbol{w},\boldsymbol{r},\alpha} \leq_{\textnormal{asp}} Y_{\boldsymbol{w},\boldsymbol{r},\alpha}$, as $\alpha \to \infty$ if $X$ and $Y$ are independent.
\end{theorem}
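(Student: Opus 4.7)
The plan is to invoke the sufficient condition \eqref{1-eq-ast-sufficient-condition}, which reduces the first assertion to verifying that $\mu(\phi_{\boldsymbol{w},\boldsymbol{r},\alpha}(A_0)) \to 0$ as $\alpha \to \infty$, where $\phi_{\boldsymbol{w},\boldsymbol{r},\alpha}$ is the distortion defined in \eqref{1-eq-mixture-phi-n-alpha-gamma-n}. The separation hypothesis will be used to sandwich $A_0$ inside a compact subset of $[0,1]$ that stays a positive distance away from every jump point $\lambda_i$ of the limit function computed in \eqref{1-eq-lim-phi-n-order-statistics-mixture}.

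Concretely, set $\delta := \tfrac{1}{2}\min_{i} \textnormal{dist}(\lambda_i, A_0) > 0$ and let
\[
K_\delta := [0,1] \setminus \bigcup_{i=1}^s (\lambda_i - \delta, \lambda_i + \delta),
\]
so that $K_\delta$ is a disjoint union of finitely many closed intervals $[a_j, b_j]$ and $A_0 \subseteq K_\delta$. Next, I would establish uniform convergence of $\phi_{\boldsymbol{w},\boldsymbol{r},\alpha}$ to its step-function limit on $K_\delta$. The Chebyshev bound \eqref{1-equation-order-statistics-non-integral-sample-size} controls $P\bigl(|\beta_{r_i,\alpha-r_i+1} - r_i/(\alpha+1)| \geq \delta/2\bigr)$ by a quantity tending to $0$ uniformly in $u$, and together with $r_i/(\alpha+1) \to \lambda_i$ this forces $\phi_{r_i, \alpha}(u) = P(\beta_{r_i,\alpha-r_i+1} \leq u)$ to converge to $0$ (resp.\ to $1$) uniformly over those $u$ that lie at least $\delta$ below (resp.\ above) $\lambda_i$. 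Taking the $w_i$-weighted sum yields uniform convergence of $\phi_{\boldsymbol{w},\boldsymbol{r},\alpha}$ to the relevant locally constant value on each $[a_j, b_j]$.

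Since $\phi_{\boldsymbol{w},\boldsymbol{r},\alpha}$ is continuous and nondecreasing, $\phi_{\boldsymbol{w},\boldsymbol{r},\alpha}([a_j, b_j]) = [\phi_{\boldsymbol{w},\boldsymbol{r},\alpha}(a_j), \phi_{\boldsymbol{w},\boldsymbol{r},\alpha}(b_j)]$, so
\[
\mu(\phi_{\boldsymbol{w},\boldsymbol{r},\alpha}(A_0)) \leq \mu(\phi_{\boldsymbol{w},\boldsymbol{r},\alpha}(K_\delta)) \leq \sum_{j} \bigl[\phi_{\boldsymbol{w},\boldsymbol{r},\alpha}(b_j) - \phi_{\boldsymbol{w},\boldsymbol{r},\alpha}(a_j)\bigr].
\]
Uniform convergence on each $[a_j, b_j]$ forces every summand on the right to tend to $0$, establishing the first assertion $X_{\boldsymbol{w},\boldsymbol{r},\alpha} \leq_{\textnormal{ast}} Y_{\boldsymbol{w},\boldsymbol{r},\alpha}$. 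The second assertion then follows immediately from \cref{1-theorem-ast-asp}, which transfers asymptotic usual stochastic order to asymptotic stochastic precedence order under independence.

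The main obstacle is the uniform convergence step: without the separation hypothesis the endpoints $a_j, b_j$ could drift into a $\lambda_i$, and the telescoping bound would no longer decay. Once the Chebyshev estimate \eqref{1-equation-order-statistics-non-integral-sample-size} is leveraged to give a rate that is genuinely uniform in $u$ throughout $K_\delta$, the remainder of the argument is routine bookkeeping.
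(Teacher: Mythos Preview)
Your proposal is correct and follows essentially the same route as the paper: both arguments cover $A_0$ by finitely many closed subintervals of $[0,1]$ lying a fixed positive distance from every $\lambda_i$, bound $\mu(\phi_{\boldsymbol{w},\boldsymbol{r},\alpha}(A_0))$ by the sum of $\phi_{\boldsymbol{w},\boldsymbol{r},\alpha}(b_j)-\phi_{\boldsymbol{w},\boldsymbol{r},\alpha}(a_j)$ over those intervals, and then invoke the limit in \eqref{1-eq-lim-phi-n-order-statistics-mixture} to send each summand to zero, with \cref{1-theorem-ast-asp} handling the asp conclusion. The only cosmetic difference is that the paper applies \eqref{1-eq-lim-phi-n-order-statistics-mixture} pointwise at the endpoints (which is all that is needed), whereas you frame the same step as uniform convergence on $K_\delta$ via the Chebyshev estimate---correct, but slightly more than required.
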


\begin{proof}
	Let $\delta=\min_{i\,\in\,\{1,2,\ldots,s\}}{\textnormal{dist}\left(\lambda_i,A_0\right)}$. Now, there can be four cases. $(1)$ $\lambda_1=0$, $\lambda_s<1$, $(2)$ $\lambda_1=0$, $\lambda_s=1$, $(3)$ $\lambda_1>0$, $\lambda_s<1$ and $(4)$ $\lambda_1>0$, $\lambda_s=1$. We consider the first case. The other three can be similarly treated. Without loss of generality, we assume that $\delta<\min{\{\min{\{\lvert \lambda_i-\lambda_j \rvert : 1 \leq i<j \leq s\}}/2,1-\lambda_s\}}$. Then $A_0 \subseteq \{\medcup_{i=1}^{s-1} (\lambda_i+\delta,\lambda_{i+1}-\delta)\} \medcup (\lambda_s+\delta,1)$. Consequently, $\phi_{\boldsymbol{w},\boldsymbol{r},\alpha}(A_0) \subseteq \{\medcup_{i=1}^{s-1} (\phi_{\boldsymbol{w},\boldsymbol{r},\alpha}(\lambda_i+\delta),\phi_{\boldsymbol{w},\boldsymbol{r},\alpha}(\lambda_{i+1}-\delta))\} \medcup (\phi_{\boldsymbol{w},\boldsymbol{r},\alpha}(\lambda_s+\delta),1)$. Hence,
	\begin{equation*}
		\mu\left(\phi_{\boldsymbol{w},\boldsymbol{r},\alpha}\left(A_0\right)\right) \leq \sum_{i=1}^{s-1} \left\{\phi_{\boldsymbol{w},\boldsymbol{r},\alpha}(\lambda_{i+1}-\delta)-\phi_{\boldsymbol{w},\boldsymbol{r},\alpha}(\lambda_i+\delta)\right\}+1-\phi_{\boldsymbol{w},\boldsymbol{r},\alpha}(\lambda_s+\delta).
	\end{equation*}
	Taking limit as $\alpha \to \infty$ and applying \eqref{1-eq-lim-phi-n-order-statistics-mixture}, we obtain $\lim_{t \to \infty} \mu\left(\phi_{\boldsymbol{w},\boldsymbol{r},\alpha}\left(A_0\right)\right) \leq 0$, which implies \eqref{1-eq-ast-sufficient-condition}. Hence, $X_{\boldsymbol{w},\boldsymbol{r},\alpha} \leq_{\textnormal{ast}} Y_{\boldsymbol{w},\boldsymbol{r},\alpha}$, as $\alpha \to \infty$. Since, $F_X$ and $F_Y$ are independent, it follows from \cref{1-theorem-ast-asp} that $X_{\boldsymbol{w},\boldsymbol{r},\alpha} \leq_{\textnormal{asp}} Y_{\boldsymbol{w},\boldsymbol{r},\alpha}$, as $\alpha \to \infty$.
\end{proof}

\ni{\it Remark.} Let $s=1$. Then the situations where $\lambda=0$ or $\lambda=1$ cover all the extreme order statistics, whereas all the central order statistics fall under the case where $0<\lambda<1$. \hfill $\blacksquare$\vs

We immediately have the next corollary, which gives sufficient conditions for equality in asymptotic usual stochastic order between $X_{\boldsymbol{w},\boldsymbol{r},\alpha}$ and $Y_{\boldsymbol{w},\boldsymbol{r},\alpha}$, as $\alpha \to \infty$. Recall that $A_1=\{u \in (0,1) : F_X^{\leftarrow}(u)<F_Y^{\leftarrow}(u)\}$.

\begin{corollary}\label{1-corollary-ast-mixture-equality}
	Let $X_{\boldsymbol{w},\boldsymbol{r},\alpha}$ and $Y_{\boldsymbol{w},\boldsymbol{r},\alpha}$ be two random variables with respective distribution functions $F_{X_{\boldsymbol{w},\boldsymbol{r},\alpha}}$ and $F_{Y_{\boldsymbol{w},\boldsymbol{r},\alpha}}$, for every $\alpha>0$. Assume that each $\lambda_i$ is separated from $A_0 \medcup A_1$ by a positive distance, i.e.
	\[
	\min_{i\,\in\,\{1,2,\ldots,s\}}{\textnormal{dist}\left(\lambda_i,A_0 \medcup A_1\right)}>0.
	\]
	Then $X_{\boldsymbol{w},\boldsymbol{r},\alpha} =_{\textnormal{ast}} Y_{\boldsymbol{w},\boldsymbol{r},\alpha}$, as $n \to \infty$. Furthermore, $X_{\boldsymbol{w},\boldsymbol{r},\alpha} =_{\textnormal{asp}} Y_{\boldsymbol{w},\boldsymbol{r},\alpha}$, as $\alpha \to \infty$ if $F_X$ and $F_Y$ are independent. \hfill $\blacksquare$
\end{corollary}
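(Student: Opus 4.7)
The plan is to derive the corollary as a straightforward two-sided application of Theorem~\ref{1-theorem-ast-mixture}. The key observation is that the set $A_1=\{u\in(0,1):F_X^{\leftarrow}(u)<F_Y^{\leftarrow}(u)\}$ is precisely the set that plays the role of $A_0$ when the roles of the two baseline distributions $F_X$ and $F_Y$ are interchanged. Since the construction $F\mapsto \phi_{\boldsymbol{w},\boldsymbol{r},\alpha}(F)$ is the same for both processes, this symmetry lets us run the theorem twice, once in each direction.

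First I would record the elementary set-theoretic identity
\[
\textnormal{dist}\left(\lambda_i,A_0 \medcup A_1\right)=\min\{\textnormal{dist}(\lambda_i,A_0),\textnormal{dist}(\lambda_i,A_1)\},
\]
so that the hypothesis $\min_{i}\textnormal{dist}(\lambda_i,A_0\medcup A_1)>0$ decouples into the two separate statements $\min_i\textnormal{dist}(\lambda_i,A_0)>0$ and $\min_i\textnormal{dist}(\lambda_i,A_1)>0$.

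Next I would apply Theorem~\ref{1-theorem-ast-mixture} directly, using $\min_i\textnormal{dist}(\lambda_i,A_0)>0$, to conclude $X_{\boldsymbol{w},\boldsymbol{r},\alpha}\leq_{\textnormal{ast}} Y_{\boldsymbol{w},\boldsymbol{r},\alpha}$ as $\alpha\to\infty$; and, under independence, also $X_{\boldsymbol{w},\boldsymbol{r},\alpha}\leq_{\textnormal{asp}} Y_{\boldsymbol{w},\boldsymbol{r},\alpha}$. Then I would re-apply the same theorem after interchanging the roles of $X$ and $Y$: the corresponding dominance-violation set becomes $\{u\in(0,1):F_Y^{\leftarrow}(u)>F_X^{\leftarrow}(u)\}=A_1$, and the separation condition $\min_i\textnormal{dist}(\lambda_i,A_1)>0$ now supplies exactly the hypothesis needed to conclude $Y_{\boldsymbol{w},\boldsymbol{r},\alpha}\leq_{\textnormal{ast}} X_{\boldsymbol{w},\boldsymbol{r},\alpha}$ as $\alpha\to\infty$, and, under independence, $Y_{\boldsymbol{w},\boldsymbol{r},\alpha}\leq_{\textnormal{asp}} X_{\boldsymbol{w},\boldsymbol{r},\alpha}$.

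Combining the two directions gives $X_{\boldsymbol{w},\boldsymbol{r},\alpha}=_{\textnormal{ast}}Y_{\boldsymbol{w},\boldsymbol{r},\alpha}$ by \cref{1-definition-asymptotic-usual-stochastic-order}, and, under independence, $X_{\boldsymbol{w},\boldsymbol{r},\alpha}=_{\textnormal{asp}}Y_{\boldsymbol{w},\boldsymbol{r},\alpha}$ by \cref{1-definition-asymptotic-stochastic-precedence-order}. There is no serious obstacle: the whole argument is a symmetry-plus-reduction, and the only care needed is confirming that the swap $F_X\leftrightarrow F_Y$ does not alter the distortion functions $\phi_{\boldsymbol{w},\boldsymbol{r},\alpha}$ nor the limits $\lambda_i$, so that the hypotheses of Theorem~\ref{1-theorem-ast-mixture} genuinely apply in both directions.
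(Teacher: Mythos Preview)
Your proposal is correct and follows exactly the approach the paper intends: the corollary is stated immediately after \cref{1-theorem-ast-mixture} with no separate proof, and your two-sided application of that theorem, using the decoupling $\textnormal{dist}(\lambda_i,A_0\medcup A_1)=\min\{\textnormal{dist}(\lambda_i,A_0),\textnormal{dist}(\lambda_i,A_1)\}$ and the symmetry that swapping $F_X\leftrightarrow F_Y$ swaps $A_0\leftrightarrow A_1$, is precisely what the authors have in mind.
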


The next result derives gradually weaker lower bounds for the quantity $\lim_{\alpha \to \infty} P\{X_{\boldsymbol{w},\boldsymbol{r},\alpha} \leq Y_{\boldsymbol{w},\boldsymbol{r},\alpha}\}$ under certain conditions.

\begin{theorem}\label{1-theorem-asp-mixture-limit-1}
	Let $X_{\boldsymbol{w},\boldsymbol{r},\alpha}$ and $Y_{\boldsymbol{w},\boldsymbol{r},\alpha}$ be two random variables with respective distribution functions $F_{X_{\boldsymbol{w},\boldsymbol{r},\alpha}}$ and $F_{Y_{\boldsymbol{w},\boldsymbol{r},\alpha}}$, for every $\alpha>0$. If $F_X$ and $F_Y$ are independent, continuous and $F_X^{\rightarrow}(\lambda_i)<F_Y^{\leftarrow}(\lambda_i)$, for every $i=1,2,\ldots,s$, then
	\begin{equation}\label{1-eq-theorem-asp-mixture-limit-1}
	\lim_{\alpha \to \infty} P\{X_{\boldsymbol{w},\boldsymbol{r},\alpha} \leq Y_{\boldsymbol{w},\boldsymbol{r},\alpha}\} \geq \max_{i \in \{1,2,\ldots,s\}} \left(w_i \sum_{j=1}^i w_j\right) \geq \max_{i \in \{1,2,\ldots,s\}} w_i^2 \geq \frac{1}{s^2}.
	\end{equation}
\end{theorem}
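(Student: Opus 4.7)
The plan is to leverage the mixture representation together with independence to reduce the problem to pairwise comparisons of ordinary order statistics, and then to extract a good lower bound from the ``upper-triangular'' terms. By independence of $X_{\boldsymbol{w},\boldsymbol{r},\alpha}$ and $Y_{\boldsymbol{w},\boldsymbol{r},\alpha}$, I would first write $P\{X_{\boldsymbol{w},\boldsymbol{r},\alpha} \leq Y_{\boldsymbol{w},\boldsymbol{r},\alpha}\} = \int F_{X_{\boldsymbol{w},\boldsymbol{r},\alpha}}(y)\,dF_{Y_{\boldsymbol{w},\boldsymbol{r},\alpha}}(y)$ and expand both mixture cdfs termwise to obtain
\[
P\{X_{\boldsymbol{w},\boldsymbol{r},\alpha} \leq Y_{\boldsymbol{w},\boldsymbol{r},\alpha}\} = \sum_{i=1}^s \sum_{j=1}^s w_i w_j\, P\{X_{r_i:\alpha} \leq Y_{r_j:\alpha}\}.
\]
The whole problem then reduces to controlling each pairwise term $P\{X_{r_i:\alpha} \leq Y_{r_j:\alpha}\}$ in the limit.

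The central step is to show that $\lim_{\alpha \to \infty} P\{X_{r_i:\alpha} \leq Y_{r_j:\alpha}\} = 1$ whenever $i \leq j$. For this, I would choose $c_1, c_2 \in \mathbb{R}$ with $F_X^{\rightarrow}(\lambda_i) < c_1 < c_2 < F_Y^{\leftarrow}(\lambda_i)$, which is possible by the standing hypothesis, and note that by monotonicity $c_2 < F_Y^{\leftarrow}(\lambda_i) \leq F_Y^{\leftarrow}(\lambda_j)$. Since $c_1 > F_X^{\rightarrow}(\lambda_i) = \sup\{x : F_X(x) \leq \lambda_i\}$, one has $F_X(c_1) > \lambda_i$; similarly $c_2 < F_Y^{\leftarrow}(\lambda_j) = \inf\{x : F_Y(x) \geq \lambda_j\}$ yields $F_Y(c_2) < \lambda_j$. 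The limits in \eqref{1-eq-lim-phi-n-order-statistics-r-i} then give
\[
P_{X_{r_i:\alpha}}\bigl((-\infty, c_1]\bigr) = \phi_{r_i,\alpha}(F_X(c_1)) \to 1, \quad P_{Y_{r_j:\alpha}}\bigl((c_2, \infty)\bigr) = 1 - \phi_{r_j,\alpha}(F_Y(c_2)) \to 1,
\]
as $\alpha \to \infty$, so setting $A = (-\infty, c_1]$ and $B = (c_2, \infty)$ (with $\sup A < \inf B$) and invoking \cref{1-theorem-asp-convergence-in probability} delivers the pairwise limit.

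Retaining only the pairs with $i \leq j$ in the double sum then yields
\[
\liminf_{\alpha \to \infty} P\{X_{\boldsymbol{w},\boldsymbol{r},\alpha} \leq Y_{\boldsymbol{w},\boldsymbol{r},\alpha}\} \geq \sum_{1 \leq i \leq j \leq s} w_i w_j = \sum_{i=1}^s w_i \sum_{j=1}^i w_j,
\]
the last equality following from the symmetry $w_i w_j = w_j w_i$ after a change of order of summation and relabelling of indices. Since each summand on the right is nonnegative, this sum dominates its largest term $\max_i \bigl(w_i \sum_{j=1}^i w_j\bigr)$, yielding the first inequality of \eqref{1-eq-theorem-asp-mixture-limit-1}; the second is immediate from $\sum_{j=1}^i w_j \geq w_i$, and the third from the pigeonhole bound $\max_i w_i \geq 1/s$ (since the $w_i$ are nonnegative and sum to one). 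The only delicate point is the pairwise limit: the asymmetric hypothesis $F_X^{\rightarrow}(\lambda_i) < F_Y^{\leftarrow}(\lambda_i)$ is exactly what is needed both to produce the separating constants $c_1, c_2$ for the upper-triangular pairs and to justify simply discarding the $j < i$ pairs, for which no analogous separation is available.
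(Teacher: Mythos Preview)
Your proof is correct, and in fact it follows a cleaner route than the paper's and yields a sharper bound. The paper works directly with the mixture distortion $\phi_{\boldsymbol{w},\boldsymbol{r},\alpha}$: it restricts the integral $\int F_{X_{\boldsymbol{w},\boldsymbol{r},\alpha}}\,dF_{Y_{\boldsymbol{w},\boldsymbol{r},\alpha}}$ to a carefully chosen interval $[l_{\lambda_i},r_{\lambda_i}]$ around a single index $i$, then uses the pointwise limits of $\phi_{\boldsymbol{w},\boldsymbol{r},\alpha}$ in \eqref{1-eq-lim-phi-n-order-statistics-mixture} to extract the contribution $w_i\sum_{j\le i}w_j$, finally optimizing over $i$. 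Your bilinear expansion into pairwise terms $w_i w_j\,P\{X_{r_i:\alpha}\le Y_{r_j:\alpha}\}$ and the appeal to \cref{1-theorem-asp-convergence-in probability} for every pair with $i\le j$ is more transparent, and by summing over \emph{all} upper-triangular pairs you actually obtain
\[
\liminf_{\alpha\to\infty} P\{X_{\boldsymbol{w},\boldsymbol{r},\alpha}\le Y_{\boldsymbol{w},\boldsymbol{r},\alpha}\}\ \ge\ \sum_{i=1}^s w_i\sum_{j=1}^i w_j,
\]
which strictly dominates the paper's $\max_i\bigl(w_i\sum_{j\le i}w_j\bigr)$ whenever $s\ge 2$ (e.g.\ for $s=2$, $w_1=1/4$, $w_2=3/4$ you get $13/16$ versus the paper's $3/4$). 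The price of the paper's approach is that by committing to a single interval it can capture only one index at a time; the benefit of yours is that the mixture structure is fully exploited, and the invocation of \cref{1-theorem-asp-convergence-in probability} neatly packages the limiting argument that the paper carries out by hand.
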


\begin{proof}
	Observe that
	\[
	P\{X_{\boldsymbol{w},\boldsymbol{r},\alpha} \leq Y_{\boldsymbol{w},\boldsymbol{r},\alpha}\}=\int_{-\infty}^{\infty} F_{X_{\boldsymbol{w},\boldsymbol{r},\alpha}}(x)\,dF_{Y_{\boldsymbol{w},\boldsymbol{r},\alpha}}(x).
	\]
	Again, note that, for the condition to hold, we must have $\lambda_1>0$ and $\lambda_s<1$, i.e., all the order statistics involved are central order statistics. Let $S=\{x \in \mathbb{R} : F_X(x) \geq F_Y(x)\}$. Also, let us fix $i \in \{1,2,\ldots,s\}$. Now, $F_X(x)>\lambda_i$, for every $x>F_X^{\rightarrow}(\lambda_i)$ and $F_Y(x)<\lambda_i$, for every $x<F_Y^{\leftarrow}(\lambda_i)$. In particular, $F_X(F_Y^{\leftarrow}(\lambda_i))>\lambda_i$ and $F_Y(F_X^{\rightarrow}(\lambda_i))<\lambda_i$. Define $l_{\lambda_i}=F_X^{\rightarrow} F_Y F_X^{\rightarrow}(\lambda_i)$ and $r_{\lambda_i}=F_Y^{\leftarrow} F_X F_Y^{\leftarrow}(\lambda_i)$. It is easy to check that $l_{\lambda_i}<F_X^{\rightarrow}(\lambda_i)$,
	$r_{\lambda_i}>F_Y^{\leftarrow}(\lambda_i)$ and $F_X(x) > F_Y(x)$, for every $x \in (l_{\lambda_i}, r_{\lambda_i})$. Clearly, $S \supseteq [l_{\lambda_i}, r_{\lambda_i}]$. Again, for every $i \in \{1,2,\ldots,s\}$, we can choose $\xi_i \in (F_X^{\rightarrow}(\lambda_i), \min{\{F_Y^{\leftarrow}(\lambda_i),F_X^{\rightarrow}(\lambda_{i+1})\}})$. Then
	\begin{align*}
	P\{X_{\boldsymbol{w},\boldsymbol{r},\alpha} \leq Y_{\boldsymbol{w},\boldsymbol{r},\alpha}\} &\geq \int_{S} F_{X_{\boldsymbol{w},\boldsymbol{r},\alpha}}(x)\,dF_{Y_{\boldsymbol{w},\boldsymbol{r},\alpha}}(x)\\
	&=\int_S \phi_{\boldsymbol{w},\boldsymbol{r},\alpha}(F_X(x))\,d\phi_{\boldsymbol{w},\boldsymbol{r},\alpha}(F_Y(x))\\
	&\geq \max_{i \in \{1,2,\ldots,s\}} \int_{l_{\lambda_i}}^{r_{\lambda_i}} \phi_{\boldsymbol{w},\boldsymbol{r},\alpha}(F_X(x))\,d\phi_{\boldsymbol{w},\boldsymbol{r},\alpha}(F_Y(x))\\
	&\geq \max_{i \in \{1,2,\ldots,s\}} \int_{\xi_i}^{r_{\lambda_i}} \phi_{\boldsymbol{w},\boldsymbol{r},\alpha}(F_X(x))\,d\phi_{\boldsymbol{w},\boldsymbol{r},\alpha}(F_Y(x))\\
	&\geq \max_{i \in \{1,2,\ldots,s\}} \phi_{\boldsymbol{w},\boldsymbol{r},\alpha}(F_X(\xi_i)) \int_{\xi_i}^{r_{\lambda_i}} d\phi_{\boldsymbol{w},\boldsymbol{r},\alpha}(F_Y(x))\\
	&=\max_{i \in \{1,2,\ldots,s\}} \phi_{\boldsymbol{w},\boldsymbol{r},\alpha}(F_X(\xi_i)) \{\phi_{\boldsymbol{w},\boldsymbol{r},\alpha}(F_Y(r_{\lambda_i}))-\phi_{\boldsymbol{w},\boldsymbol{r},\alpha}(F_Y(\xi_i))\}.
	\end{align*}
	For every $i \in \{1,2,\ldots,s\}$, we have $F_X(\xi_i)>\lambda_i$, which is greater than $F_Y(\xi_i)$. Finally, note that $F_Y(r_{\lambda_i})=F_X F_Y^{\leftarrow}(\lambda_i)>\lambda_i$. From these, we have by \eqref{1-eq-lim-phi-n-order-statistics-mixture}, $\lim_{\alpha \to \infty} \phi_{\boldsymbol{w},\boldsymbol{r},\alpha}(F_X(\xi_i)) \geq \sum_{j=1}^i w_j$, $\lim_{\alpha \to \infty} \phi_{\boldsymbol{w},\boldsymbol{r},\alpha}(F_Y(\xi_i)) \leq \sum_{j=1}^{i-1} w_j$ and $\lim_{\alpha \to \infty} \phi_{\boldsymbol{w},\boldsymbol{r},\alpha}(F_Y(r_{\lambda_i})) \geq \sum_{j=1}^i w_j$. Hence
	\[
	\lim_{\alpha \to \infty} \int_{S} F_{X_{\boldsymbol{w},\boldsymbol{r},\alpha}}(x)\,dF_{Y_{\boldsymbol{w},\boldsymbol{r},\alpha}}(x) \geq \max_{i \in \{1,2,\ldots,s\}} \left(w_i \sum_{j=1}^i w_j\right).
	\]
	This proves the first inequality in \eqref{1-eq-theorem-asp-mixture-limit-1}. The second inequality is due to nonnegativity of the weights and finally the third inequality follows from the fact that, under the condition $\sum_{i=1}^s w_i=1$, we have $\max{\{w_1,w_2,\ldots,w_s\}} \geq 1/s$.
\end{proof}

It is easy to see that the three lower bounds given in \eqref{1-eq-theorem-asp-mixture-limit-1}, are gradually weaker, yet gradually easier to compute. For example, if $s=2$, $w_1=1/4$ and $w_2=3/4$, then the first, the second and the third lower bounds are $3/4$, $9/16$ and $1/4$, respectively. For the case $s=1$, the weakest lower bound becomes $1$, which leads to the following corollary.

\begin{corollary}\label{1-corollary-asp-order-statistics-limit-1-non-integral-sample-size}
	Let $\alpha>0$ and $0<r<\alpha+1$, which  may depend on $\alpha$. Let $X_{r:\alpha}$ and $Y_{r:\alpha}$ be the $r$th order statistics from two random samples of non-integral size $\alpha$, from continuous distributions with cdf $F_X$ and $F_Y$, respectively. Also, let $\lambda=\lim_{\alpha \to \infty} \frac{r}{\alpha}$. If $F_X$ and $F_Y$ are independent and $F_X^{\rightarrow}(\lambda)<F_Y^{\leftarrow}(\lambda)$, then $\lim_{\alpha \to \infty} P\{X_{r:\alpha} \leq Y_{r:\alpha}\}=1$. \hfill $\blacksquare$
\end{corollary}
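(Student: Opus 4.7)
The plan is to derive the corollary as the $s=1$ specialization of \cref{1-theorem-asp-mixture-limit-1}. Taking $s=1$, $w_1=1$, $r_1=r$, the mixture cdf $F_{X_{\boldsymbol{w},\boldsymbol{r},\alpha}}$ collapses to $F_{X_{r:\alpha}}=\phi_{r,\alpha}\circ F_X$, and likewise $F_{Y_{\boldsymbol{w},\boldsymbol{r},\alpha}}=F_{Y_{r:\alpha}}$, so the random variables $X_{\boldsymbol{w},\boldsymbol{r},\alpha}$ and $Y_{\boldsymbol{w},\boldsymbol{r},\alpha}$ become $X_{r:\alpha}$ and $Y_{r:\alpha}$. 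One piece of bookkeeping is needed first: the threshold $\lambda_1=\lim_{\alpha\to\infty}r/(\alpha+1)$ used in \cref{1-theorem-asp-mixture-limit-1} agrees with the $\lambda=\lim_{\alpha\to\infty}r/\alpha$ of the corollary, since $0<r<\alpha+1$ gives $r/\alpha-r/(\alpha+1)=r/(\alpha(\alpha+1))=O(1/\alpha)\to 0$, and so the two limits coincide whenever either exists.

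With this identification, the hypotheses of \cref{1-theorem-asp-mixture-limit-1} are satisfied verbatim: independence and continuity of $F_X,F_Y$ are shared, and the single required inequality $F_X^{\rightarrow}(\lambda_1)<F_Y^{\leftarrow}(\lambda_1)$ is precisely the assumption $F_X^{\rightarrow}(\lambda)<F_Y^{\leftarrow}(\lambda)$ of the corollary. Applying \cref{1-theorem-asp-mixture-limit-1} then yields
\begin{equation*}
\liminf_{\alpha\to\infty}P\{X_{r:\alpha}\leq Y_{r:\alpha}\}\;\geq\;\max_{i\in\{1\}}\Bigl(w_i\sum_{j=1}^i w_j\Bigr)\;=\;w_1^2\;=\;1.
\end{equation*}
Combining this with the trivial upper bound $P\{X_{r:\alpha}\leq Y_{r:\alpha}\}\leq 1$, the limit exists and equals $1$, which is the claimed conclusion.

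No real obstacle is anticipated here, since \cref{1-theorem-asp-mixture-limit-1} already carries out the hard work of localizing the Beta-distributed quantile $\beta_{r,\alpha-r+1}$ near $\lambda$ via Chebyshev's inequality and of controlling the distortion $\phi_{\boldsymbol{w},\boldsymbol{r},\alpha}$ away from its jump point. The only mildly delicate point is definitional rather than substantive: the setup preceding \cref{1-theorem-asp-mixture-limit-1} stipulates $\boldsymbol{w}\in(0,1)^s$, which strictly speaking forbids $w_1=1$ when $s=1$. This can be handled either by observing that the proof of \cref{1-theorem-asp-mixture-limit-1} goes through unchanged with $w_1=1$ (the limit \eqref{1-eq-lim-phi-n-order-statistics-mixture} simply reduces to \eqref{1-eq-lim-phi-n-order-statistics-r-i}), or by repeating the same bounding argument on the cdfs $\phi_{r,\alpha}\circ F_X$ and $\phi_{r,\alpha}\circ F_Y$ directly.
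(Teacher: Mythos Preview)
Your proposal is correct and follows essentially the same approach as the paper: the corollary is obtained as the $s=1$ specialization of \cref{1-theorem-asp-mixture-limit-1}, where all three lower bounds in \eqref{1-eq-theorem-asp-mixture-limit-1} collapse to $1$. Your additional bookkeeping (reconciling $\lim r/\alpha$ with $\lim r/(\alpha+1)$, and flagging the nominal $\boldsymbol{w}\in(0,1)^s$ constraint) is more careful than the paper's one-line justification but does not change the argument.
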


Note that $F_X^{\rightarrow}(\lambda)<F_Y^{\leftarrow}(\lambda)$ holds only if $0<\lambda<1$. When $\lambda=1$, one may think $F_X(x) > F_Y(x)$, for every $x \geq d$, for some $d \in \mathbb{R}$ will give $\lim_{\alpha \to \infty} P\{X_{r:\alpha} \leq Y_{r:\alpha}\}=1$. The next example, somewhat counterintuitively, shows that it is not true. Similar example can be constructed for the case when $\lambda=0$.

\begin{example}\label{1-example-limit-largest-order-statistic-stochastic-precedence-1}
	We consider two baseline cdfs $F_X$ and $F_Y$ having support $\mathbb{R}$. We have $F_{X_{n:n}}(x)=(F_X(x))^n$ and $F_{Y_{n:n}}(x)=(F_Y(x))^n$, for every $x \in \mathbb{R}$. Then
	\begin{align*}
	P\{X_{n:n} \leq Y_{n:n}\}&=\int_{-\infty}^{\infty} F_{X_{n:n}}(x)\,dF_{Y_{n:n}}(x)=n\int_{-\infty}^{\infty} (F_X(x))^n (F_Y(x))^{n-1} f_Y(x)\,dx\\
	&=n\int_{-\infty}^c (F_X(x))^n (F_Y(x))^{n-1} f_Y(x)\,dx+n\int_c^{\infty} (F_X(x))^n (F_Y(x))^{n-1} f_Y(x)\,dx.
	\end{align*}
	Since $F_X(c)<1$ and $F_Y(c)<1$, the first integral is bounded above by $\left\{F_X(c)F_Y(c)\right\}^n$, which goes to $0$, as $n \to \infty$. Hence
	\[
	\lim_{n \to \infty} P\{X_{n:n} \leq Y_{n:n}\}=\lim_{n \to \infty} n\int_c^{\infty} (F_X(x))^n (F_Y(x))^{n-1} f_Y(x) dx.
	\]
	Let $F_Y$ be any continuous cdf with support $\mathbb{R}$. We specify $F_X$ in the region $(c,\infty)$ by
	\[
	F_X(x)=\frac{1+F_Y(x)}{2}.
	\]
	Clearly, $F_X(x)>F_Y(x)$, for every $x>c$. In the region $(-\infty,c)$, we keep $F_X$ unspecified with the only condition that it is continuous, having a density, since how $F_X$ behaves in this region has no effect on $\lim_{n \to \infty} P(X_{n:n} \leq Y_{n:n})$. With this choice of $F_X$, we have, by routine calculations,
	\begin{align*}\label{1-eq-example-limit-largest-order-statistic-stochastic-precedence-1-routine-calculations}
	n\int_c^{\infty} (F_X(x))^n (F_Y(x))^{n-1} f_Y(x)\,dx&=\frac{n}{2^n}\int_{F_Y(c)}^1 \sum_{i=0}^n \binom{n}{i} u^{n+i-1}\,du\\
	&\leq \frac{1}{2^n}\sum_{i=0}^n \binom{n}{i} \frac{n}{n+i}\\
	&\leq \frac{1}{2^n}\sum_{i=0}^{\left[n/2\right]} \binom{n}{i}+\frac{1}{3 \cdot 2^{n-1}}\sum_{i=\left[n/2\right]+1}^n \binom{n}{i}\\
	&\leq \frac{5}{6}+O\left(\frac{1}{\sqrt{n}}\right).
	\end{align*}
	Hence, we have $\lim_{n \to \infty} P\{X_{n:n} \leq Y_{n:n}\} \leq 5/6$, which disproves the supposition. \hfill $\blacksquare$
\end{example}

The next theorem shows that, under the setup and condition of \cref{1-theorem-ast-mixture}, we have $X_{\boldsymbol{w},\boldsymbol{r},\alpha} \leq_{L_1\textnormal{-ast}} Y_{\boldsymbol{w},\boldsymbol{r},\alpha}$, as $\alpha \to \infty$, if $\max{\{E\vert X \vert,E\vert Y \vert\}}<\infty$ and $X_{\boldsymbol{w},\boldsymbol{r},\alpha} \leq_{\mathcal{W}_2\textnormal{-ast}} Y_{\boldsymbol{w},\boldsymbol{r},\alpha}$, as $\alpha \to \infty$, if $\max{\{E\left(X^2\right),E\left(Y^2\right)\}}<\infty$.

\begin{theorem}\label{1-theorem-L-1-mixture}
	 Let $X_{\boldsymbol{w},\boldsymbol{r},\alpha}$ and $Y_{\boldsymbol{w},\boldsymbol{r},\alpha}$ be two random variables with respective distribution functions $F_{X_{\boldsymbol{w},\boldsymbol{r},\alpha}}$ and $F_{Y_{\boldsymbol{w},\boldsymbol{r},\alpha}}$, for every $\alpha>0$. Assume that each $\lambda_i$ is separated from $A_0$ by a positive distance, i.e.
	 \[
	 \min_{i\,\in\,\{1,2,\ldots,s\}}{\textnormal{dist}\left(\lambda_i,A_0\right)}>0.
	 \]
	 Then $X_{\boldsymbol{w},\boldsymbol{r},\alpha} \leq_{L_1\textnormal{-ast}} Y_{\boldsymbol{w},\boldsymbol{r},\alpha}$, as $\alpha \to \infty$, if $\max{\{E\vert X \vert,E\vert Y \vert\}}<\infty$ and $X_{\boldsymbol{w},\boldsymbol{r},\alpha} \leq_{\mathcal{W}_2\textnormal{-ast}} Y_{\boldsymbol{w},\boldsymbol{r},\alpha}$, as $\alpha \to \infty$, if $\max{\{E\left(X^2\right),E\left(Y^2\right)\}}<\infty$.
\end{theorem}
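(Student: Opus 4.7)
\smallskip

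\noindent\textbf{Proof proposal.} The plan is to invoke the distortion-function sufficient conditions derived just before and at equation \eqref{1-eq-L-1-sufficient-condition}. Under the setup, $F_{X_{\boldsymbol{w},\boldsymbol{r},\alpha}} = \phi_{\boldsymbol{w},\boldsymbol{r},\alpha}(F_X)$ and $F_{Y_{\boldsymbol{w},\boldsymbol{r},\alpha}} = \phi_{\boldsymbol{w},\boldsymbol{r},\alpha}(F_Y)$, where $\phi_{\boldsymbol{w},\boldsymbol{r},\alpha}$ is as in \eqref{1-eq-mixture-phi-n-alpha-gamma-n}. Each $\phi_{r_i,\alpha}$ is continuous and differentiable on $(0,1)$ with derivative equal to the Beta$(r_i,\alpha-r_i+1)$ density, so $\phi_{\boldsymbol{w},\boldsymbol{r},\alpha}$ is differentiable with $\phi_{\boldsymbol{w},\boldsymbol{r},\alpha}'(v) = \sum_{i=1}^s w_i \phi_{r_i,\alpha}'(v)$. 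Hence it suffices to show
\[
\lim_{\alpha \to \infty} \sup_{v \,\in\, A_0} \phi_{r_i,\alpha}'(v) = 0, \qquad i=1,2,\ldots,s,
\]
after which the $L_1$ case follows from \eqref{1-eq-L-1-sufficient-condition} and the hypothesis $\max\{E|X|,E|Y|\} < \infty$, and the $\mathcal{W}_2$ case follows from the analogous sufficient condition stated immediately after \eqref{1-eq-L-1-sufficient-condition} together with $\max\{E(X^2),E(Y^2)\} < \infty$.

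Fix $i \in \{1,2,\ldots,s\}$ and set $\delta = \min_{j} \textnormal{dist}(\lambda_j, A_0) > 0$. The key structural fact to exploit is that $\phi_{r_i,\alpha}'$, being a Beta density, is unimodal with mode at $(r_i-1)/(\alpha-1)$ when $1 < r_i < \alpha$ (and monotone on $[0,1]$ otherwise, as happens for the extreme cases $\lambda_i \in \{0,1\}$). Since $r_i/(\alpha+1) \to \lambda_i$, for all sufficiently large $\alpha$ the mode lies in $(\lambda_i - \delta/2, \lambda_i + \delta/2)$, so $\phi_{r_i,\alpha}'$ is nonincreasing on $[\lambda_i + \delta/2, 1]$ and nondecreasing on $[0, \lambda_i - \delta/2]$. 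Because $A_0$ is disjoint from $(\lambda_i - \delta, \lambda_i + \delta)$, we may split $A_0 = A_0^+ \medcup A_0^-$ with $A_0^+ \subseteq [\lambda_i + \delta, 1]$ and $A_0^- \subseteq [0, \lambda_i - \delta]$, and by monotonicity
\[
\sup_{v \,\in\, A_0} \phi_{r_i,\alpha}'(v) \leq \max\{\phi_{r_i,\alpha}'(\lambda_i + \delta),\,\phi_{r_i,\alpha}'(\lambda_i - \delta)\}
\]
for every such large $\alpha$ (with the convention that missing endpoints are omitted when $\lambda_i \in \{0,1\}$).

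It remains to prove that each of these endpoint values tends to $0$. For the right endpoint, pick $\varepsilon \in (0,\delta/2)$; on $[\lambda_i + \delta - \varepsilon, \lambda_i + \delta]$ the density is still nonincreasing for large $\alpha$, so
\[
\varepsilon\,\phi_{r_i,\alpha}'(\lambda_i+\delta) \leq \int_{\lambda_i+\delta-\varepsilon}^{\lambda_i+\delta} \phi_{r_i,\alpha}'(u)\,du = \phi_{r_i,\alpha}(\lambda_i+\delta) - \phi_{r_i,\alpha}(\lambda_i+\delta-\varepsilon).
\]
Both arguments exceed $\lambda_i$, so by \eqref{1-eq-lim-phi-n-order-statistics-r-i} the right-hand side tends to $1-1 = 0$, and hence $\phi_{r_i,\alpha}'(\lambda_i+\delta) \to 0$. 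The same argument on $[\lambda_i - \delta, \lambda_i - \delta + \varepsilon]$, using the nondecreasing portion of the density and the fact that $\phi_{r_i,\alpha}$ tends to $0$ on $[0,\lambda_i)$, yields $\phi_{r_i,\alpha}'(\lambda_i - \delta) \to 0$. Summing over $i$ with weights $w_i$ gives $\sup_{v \in A_0} \phi_{\boldsymbol{w},\boldsymbol{r},\alpha}'(v) \to 0$, completing the verification of \eqref{1-eq-L-1-sufficient-condition} and of its $\mathcal{W}_2$ analogue.

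The main (and essentially the only) obstacle is the uniform control of the Beta densities on a set separated from the limiting mode. A clumsy route would try Stirling or large-deviations estimates, but the unimodality of $\phi_{r_i,\alpha}'$ combined with the already-established pointwise limit of $\phi_{r_i,\alpha}$ in \eqref{1-eq-lim-phi-n-order-statistics-r-i} reduces the problem to a one-line mean-value estimate, as sketched above; the bookkeeping for the boundary cases $\lambda_1=0$ or $\lambda_s=1$ is the only mildly technical point, and it is handled by dropping the irrelevant endpoint from the bound.
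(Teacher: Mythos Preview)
Your proposal is correct and follows essentially the same route as the paper's proof: reduce to the sufficient condition \eqref{1-eq-L-1-sufficient-condition}, exploit the unimodality of the Beta density $\phi_{r_i,\alpha}'$ to bound $\sup_{v\in A_0}\phi_{r_i,\alpha}'(v)$ by the values at the edge points $\lambda_i\pm\delta$, and then use the pointwise limit \eqref{1-eq-lim-phi-n-order-statistics-r-i} of $\phi_{r_i,\alpha}$ to drive those edge values to zero. The paper phrases the last step as a mean-value-theorem contradiction, whereas you write the equivalent direct integral estimate $\varepsilon\,\phi_{r_i,\alpha}'(\lambda_i+\delta)\le \phi_{r_i,\alpha}(\lambda_i+\delta)-\phi_{r_i,\alpha}(\lambda_i+\delta-\varepsilon)$; these are the same argument, and the paper likewise dispatches the boundary cases $\lambda_1=0$, $\lambda_s=1$ by a short case analysis matching your ``omit the irrelevant endpoint'' convention.
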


\begin{proof}
	Observe that the proof follows if $\lim_{\alpha \to \infty} \sup_{u \,\in A_0} \phi_{\boldsymbol{w},\boldsymbol{r},\alpha}'(u)=0$. Again, from \eqref{1-eq-mixture-phi-n-alpha-gamma-n}, we have $\phi_{\boldsymbol{w},\boldsymbol{r},\alpha}'(u)=\sum_{i=1}^s w_i\,\phi_{r_i,\alpha}'(u)$, for every $u \in (0,1)$. Then
	\begin{equation}\label{1-eq-theorem-L-1-mixture-1}
		\lim_{\alpha \to \infty} \sup_{u \,\in A_0} \phi_{\boldsymbol{w},\boldsymbol{r},\alpha}'(u)=\sum_{i=1}^s w_i \lim_{\alpha \to \infty} \sup_{u \,\in A_0} \phi_{r_i,\alpha}'(u).
	\end{equation}
	Let $\epsilon_1=\min{\{\textnormal{dist}\left(\lambda_i,A_0\right) : i\,\in\,\{1,2,\ldots,s\}\}}$ and $\epsilon_2=\min{\{\vert \lambda_{i+1}-\lambda_i \vert : i \in \{1,2,\ldots,s-1\}\}}$. Now, there can be four cases. $(1)$ $\lambda_1=0$, $\lambda_s<1$, $(2)$ $\lambda_1=0$, $\lambda_s=1$, $(3)$ $\lambda_1>0$, $\lambda_s<1$ and $(4)$ $\lambda_1>0$, $\lambda_s=1$. We consider the first case. The other three can be similarly treated with suitable modifications. Let $\epsilon=\min{\{\epsilon_1,\epsilon_2,(1-\lambda_s)/2\}}$. Then, for every $i \in \{2,3,\ldots,s\}$, we have $\sup_{u \,\in A_0} \phi_{r_1,\alpha}'(u) \leq \sup_{u \,\in (\lambda_1+\epsilon,1)} \phi_{r_1,\alpha}'(u)$ and $\sup_{u \,\in A_0} \phi_{r_i,\alpha}'(u) \leq \sup_{u \,\in (0,\lambda_i-\epsilon) \cup (\lambda_i+\epsilon,1)} \phi_{r_i,\alpha}'(u)$. Again, from \eqref{1-eq-phi-r-alpha-non-integral}, we have $\phi_{r_i,\alpha}'(u)=u^{r_i-1}(1-u)^{\alpha-r_i}/B(r_i,\alpha-r_i+1)$, for every $u \in (0,1)$, for every $i \in \{1,2,\ldots,s\}$. It is easy to see that, for large enough $\alpha$, $\phi_{r_1,\alpha}'$ is decreasing in $u \in (\lambda_1+\epsilon/2,1)$. Also, for large enough $\alpha$, $\phi_{r_i,\alpha}'$ is increasing in $u \in (0,\lambda_i-\epsilon/2)$ and is decreasing in $u \in (\lambda_i+\epsilon/2,1)$, for every $i \in \{2,3,\ldots,s\}$. Consequently, $\sup_{u \,\in (\lambda_1+\epsilon,1)} \phi_{r_1,\alpha}'(u)=\phi_{r_1,\alpha}'(\lambda_1+\epsilon)$ and $\sup_{u \,\in (0,\lambda_i-\epsilon) \cup (\lambda_i+\epsilon,1)} \phi_{r_i,\alpha}'(u)=\max{\{\phi_{r_i,\alpha}'(\lambda_i-\epsilon),\phi_{r_i,\alpha}'(\lambda_i+\epsilon)\}}$, for every $i \in \{2,3,\ldots,s\}$. Combining these observations, from \eqref{1-eq-theorem-L-1-mixture-1}, we obtain
	\begin{equation*}
		 \lim_{\alpha \to \infty} \sup_{u \,\in A_0} \phi_{\boldsymbol{w},\boldsymbol{r},\alpha}'(u) \leq w_1 \lim_{\alpha \to \infty} \phi_{r_1,\alpha}'(\lambda_1+\epsilon)+\sum_{i=2}^s w_i \max{\left\{\lim_{\alpha \to \infty} \phi_{r_i,\alpha}'(\lambda_i-\epsilon),\lim_{\alpha \to \infty} \phi_{r_i,\alpha}'(\lambda_i+\epsilon)\right\}}.
	\end{equation*}
	Thus, it is enough to show that $\lim_{\alpha \to \infty} \phi_{r_1,\alpha}'(\lambda_1+\epsilon)=0$ and $\lim_{\alpha \to \infty} \phi_{r_i,\alpha}'(\lambda_i-\epsilon)=\lim_{\alpha \to \infty} \phi_{r_i,\alpha}'(\lambda_i+\epsilon)=0$, for every $i \in \{2,3,\ldots,s\}$. Let us fix $i \in \{2,3,\ldots,s\}$. Now, it follows from \eqref{1-eq-lim-phi-n-order-statistics-r-i} that $\lim_{\alpha \to \infty} \phi_{r_i,\alpha}(\lambda_i-\epsilon/2)=0$. Let $\delta>0$ be arbitrary. Then there exists $x>0$ such that whenever $\alpha>x$, we have $\phi_{r_i,\alpha}(\lambda_i-\epsilon/2)<\delta$ and $\phi_{r_i,\alpha}'$ is increasing in $(0,\lambda_i-\epsilon/2)$. Let us choose $\alpha>x$. Suppose, for the sake of contradiction, that $\phi_{r_i,\alpha}'(\lambda_i-\epsilon) \geq 2\delta/\epsilon$. Then $\phi_{r_i,\alpha}'(u)>2\delta/\epsilon$, for every $u \in (\lambda_i-\epsilon,\lambda_i-\epsilon/2)$. Now, by mean value theorem, $\phi_{r_i,\alpha}(\lambda_i-\epsilon/2)=\phi_{r_i,\alpha}(\lambda_i-\epsilon)+(\epsilon/2) \phi_{r_i,\alpha}'(c)$, for some $c \in (\lambda_i-\epsilon,\lambda_i-\epsilon/2)$. Since, $\phi_{r_i,\alpha}'(c)>2\delta/\epsilon$, we have $\phi_{r_i,\alpha}(\lambda_i-\epsilon/2)>\delta$, which is a contradiction. Thus, we must have $\phi_{r_i,\alpha}'(\lambda_i-\epsilon)<2\delta/\epsilon$. Since, $\delta>0$ can be arbitrarily small, we obtain $\lim_{\alpha \to \infty} \phi_{r_i,\alpha}'(\lambda_i-\epsilon)=0$. A similar argument can show that $\lim_{\alpha \to \infty} \phi_{r_i,\alpha}'(\lambda_i+\epsilon)=0$. Hence the proof follows.
\end{proof}

The following corollary provides sufficient conditions for equality in $L_1$-asymptotic usual stochastic order of $X_{\boldsymbol{w},\boldsymbol{r},\alpha}$ and $Y_{\boldsymbol{w},\boldsymbol{r},\alpha}$, as $\alpha \to \infty$.

\begin{corollary}\label{1-corollary-L-1-mixture-equality}
	Let $X_{\boldsymbol{w},\boldsymbol{r},\alpha}$ and $Y_{\boldsymbol{w},\boldsymbol{r},\alpha}$ be two random variables with respective distribution functions $F_{X_{\boldsymbol{w},\boldsymbol{r},\alpha}}$ and $F_{Y_{\boldsymbol{w},\boldsymbol{r},\alpha}}$, for every $\alpha>0$. Assume that each $\lambda_i$ is separated from $A_0 \medcup A_1$ by a positive distance, i.e.
	\[
	\min_{i\,\in\,\{1,2,\ldots,s\}}{\textnormal{dist}\left(\lambda_i,A_0 \medcup A_1\right)}>0.
	\]
	 Then $X_{\boldsymbol{w},\boldsymbol{r},\alpha} =_{L_1\textnormal{-ast}} Y_{\boldsymbol{w},\boldsymbol{r},\alpha}$, as $\alpha \to \infty$, if $\max{\{E\vert X \vert,E\vert Y \vert\}}<\infty$ and $X_{\boldsymbol{w},\boldsymbol{r},\alpha} =_{\mathcal{W}_2\textnormal{-ast}} Y_{\boldsymbol{w},\boldsymbol{r},\alpha}$, as $\alpha \to \infty$, if $\max{\{E\left(X^2\right),E\left(Y^2\right)\}}<\infty$. \hfill $\blacksquare$
\end{corollary}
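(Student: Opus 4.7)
The plan is to derive the corollary by applying \cref{1-theorem-L-1-mixture} twice, once in each direction, using the symmetry between the roles of the two baseline samples. Recall that $X_{\boldsymbol{w},\boldsymbol{r},\alpha} =_{L_1\textnormal{-ast}} Y_{\boldsymbol{w},\boldsymbol{r},\alpha}$ means by definition that both $X_{\boldsymbol{w},\boldsymbol{r},\alpha} \leq_{L_1\textnormal{-ast}} Y_{\boldsymbol{w},\boldsymbol{r},\alpha}$ and $Y_{\boldsymbol{w},\boldsymbol{r},\alpha} \leq_{L_1\textnormal{-ast}} X_{\boldsymbol{w},\boldsymbol{r},\alpha}$ hold as $\alpha \to \infty$, and analogously for $\mathcal{W}_2$.

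First, I would observe that the hypothesis $\min_{i} \textnormal{dist}(\lambda_i, A_0 \medcup A_1) > 0$ is equivalent to the simultaneous conditions $\min_{i} \textnormal{dist}(\lambda_i, A_0) > 0$ and $\min_{i} \textnormal{dist}(\lambda_i, A_1) > 0$, since $\textnormal{dist}(x, A \medcup B) = \min\{\textnormal{dist}(x,A), \textnormal{dist}(x,B)\}$. The first of these two conditions is exactly the hypothesis of \cref{1-theorem-L-1-mixture} applied to the ordered pair $(X,Y)$, and yields $X_{\boldsymbol{w},\boldsymbol{r},\alpha} \leq_{L_1\textnormal{-ast}} Y_{\boldsymbol{w},\boldsymbol{r},\alpha}$ when $\max\{E|X|,E|Y|\} < \infty$, and $X_{\boldsymbol{w},\boldsymbol{r},\alpha} \leq_{\mathcal{W}_2\textnormal{-ast}} Y_{\boldsymbol{w},\boldsymbol{r},\alpha}$ when $\max\{E(X^2),E(Y^2)\} < \infty$.

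For the reverse direction, I would swap the roles of $X$ and $Y$ in the setup preceding \cref{1-theorem-L-1-mixture}. Under this swap, the set playing the role of $A_0$ becomes $\{u \in (0,1) : F_Y^{\leftarrow}(u) > F_X^{\leftarrow}(u)\}$, which is precisely $A_1$. The corresponding mixture cdfs are the same functions $\phi_{\boldsymbol{w},\boldsymbol{r},\alpha} \circ F_Y$ and $\phi_{\boldsymbol{w},\boldsymbol{r},\alpha} \circ F_X$, and the moment conditions on $X$ and $Y$ are symmetric. Hence the second condition $\min_i \textnormal{dist}(\lambda_i, A_1) > 0$ lets me invoke \cref{1-theorem-L-1-mixture} with $(X,Y)$ replaced by $(Y,X)$ to conclude $Y_{\boldsymbol{w},\boldsymbol{r},\alpha} \leq_{L_1\textnormal{-ast}} X_{\boldsymbol{w},\boldsymbol{r},\alpha}$ (resp.\ $\leq_{\mathcal{W}_2\textnormal{-ast}}$) under the appropriate moment hypothesis. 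Combining the two directions gives the desired equality in each asymptotic order.

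There is essentially no hard obstacle here: the argument is a direct symmetrization of \cref{1-theorem-L-1-mixture}. The only point that requires a line of care is the remark that the set $A_0$ appearing in \cref{1-theorem-L-1-mixture} is defined relative to the ordered pair of baseline cdfs, so the theorem, read with the pair in the opposite order, automatically furnishes the reverse inequality; the assumption $\textnormal{dist}(\lambda_i, A_0 \medcup A_1)>0$ is precisely what is needed to feed both applications.
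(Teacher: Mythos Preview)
Your proposal is correct and matches the paper's approach: the corollary is stated there without proof (marked $\blacksquare$), being an immediate consequence of \cref{1-theorem-L-1-mixture} applied in both directions exactly as you describe. The observation that swapping the roles of $X$ and $Y$ turns $A_0$ into $A_1$, so that the single hypothesis $\min_i \textnormal{dist}(\lambda_i, A_0 \medcup A_1)>0$ feeds both applications, is precisely the intended reasoning.
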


\subsection*{Record values}\label{1-subsection-record} Motivated by reports on extreme weather conditions, \citet{C_1952} introduced the notion of record values. Let $\left\{X_n : n\in \mathbb{N}\right\}$ be a sequence of random variables, generated independently from a distribution $F_X$. The first observation $X_1$ (to be precise, the realized value of $X_1$) is always a record. The next (upper) record occurs when for some $N \in \mathbb{N}$, $X_N>X_1$ but $X_n \leq X_1$ for every $n \leq N-1$ and so on. Following this idea, the sequence $\{T_n : n \in \mathbb{N}\}$ of {\it upper record times} is recursively defined by $T_1=1$ and $T_{n+1}=\min{\left\{i \in \mathbb{N} : X_i>X_{T_n}\right\}}$ for every $n \in \mathbb{N}$. The corresponding sequence of {\it upper record values} $\left\{R_n : n \in \mathbb{N}\right\}$ is defined by $R_n=X_{T_n}$ for every $n \in \mathbb{N}$. A detailed summary of record statistics is given in \citet{AN_2011}. \citet{DK_1976} generalized this concept to introduce $k$th record values, which is useful when one is interested in the records in terms of $k$th highest value, where $k$ is a positive integer. Here, the sequence $\{T_n^{(k)} : n \in \mathbb{N}\}$ of {\it upper $k$th record times} is defined by
\[
T_1^{(k)}=1 \text{ and } T_{n+1}^{(k)}=\min{\left\{i \in \mathbb{N} : X_{i:i+k-1}>X_{T_n^{(k)}:T_n^{(k)}+k-1}\right\}}, \text{ for every } n \in \mathbb{N}.
\]
The corresponding sequence of {\it upper $k$th record values} $\left\{R_n^{(k)} : n \in \mathbb{N}\right\}$ is defined by
\[
R_n^{(k)}=X_{T_n^{(k)}},
\]
for every $n \in \mathbb{N}$. Note that the upper $k$th record time and the upper $k$th record value reduce to their respective ordinary counterparts when $k=1$. Similarly, one can define the lower $k$th record time and the lower $k$th record value. In this work, we shall focus on upper $k$th record value. Analogous result for lower $k$th record value can be obtained in a similar way. From Corollary $2$ of \citet{DK_1976}, it follows that the distribution function of $R_n^{(k)}$ can be written as $\phi_{n,k}(F_X(x))$, where \begin{equation}\label{1-eq-kth-record-value-phi-n}
\phi_{n,k}(u)=\frac{1}{(n-1)!}\int_0^{-k\ln{(1-u)}} y^{n-1} e^{-y} dy,
\end{equation}
for every $u \in [0,1]$. Let $G_n$ be a random variable following gamma distribution with shape parameter $n$ and rate parameter $1$. We can write $G_n=\sum_{i=1}^n X_i$, where $X_i$ has exponential distribution with rate parameter $1$, for every $i \in \{1,2,\ldots,n\}$. Then, for every $u \in [0,1]$, we have
\[
\phi_{n,k}(u)=P\{G_n \leq -k\ln{(1-u)}\}=P\left\{\frac{\sum_{i=1}^n X_i-n}{\sqrt{n}} \leq -\frac{k\ln{(1-u)}}{\sqrt{n}}-\sqrt{n}\right\}.
\]
By central limit theorem, for every $u \in [0,1)$, we have
\begin{equation}\label{1-eq-lim-phi-n-record-values}
	\lim_{n \to \infty} \phi_{n,k}(u)=\lim_{n \to \infty} \Phi\left(-\frac{k\ln{(1-u)}}{\sqrt{n}}-\sqrt{n}\right)=\begin{cases*}
		0, & if  $u<1$; \\
		1, & if $u=1$.
	\end{cases*}
\end{equation}

The next two results provide sufficient conditions for asymptotic stochastic dominance and equality, from the perspectives of asymptotic usual stochastic order and asymptotic stochastic precedence order, for the $k$th record values from two different homogeneous samples of size $n$, as $n \to \infty$.

\begin{theorem}\label{1-theorem-ast-record-values}
	Let $F_X$ and $F_Y$ be two cdfs such that there exists $d \in \mathbb{R}$ satisfying $F_Y(d)<1$ and $F_X(x) \geq F_Y(x)$ for every $x \geq d$. Let $\left\{X_n : n \in \mathbb{N}\right\}$ and $\left\{Y_n : n \in \mathbb{N}\right\}$ be two sequences of iid random variables with respective parent distribution functions $F_X$ and $F_Y$ and respective sequences of $k$th record values $\{R_n^{(k)} : n \in \mathbb{N}\}$ and $\{S_n^{(k)} : n \in \mathbb{N}\}$. Then $R_n^{(k)} \leq_{\textnormal{ast}} S_n^{(k)}$, as $n \to \infty$, for every $k \in \mathbb{N}$. Furthermore, $R_n^{(k)} \leq_{\textnormal{asp}} S_n^{(k)}$, as $n \to \infty$, for every $k \in \mathbb{N}$, if $F_X$ and $F_Y$ are independent.
\end{theorem}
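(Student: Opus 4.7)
The plan is to apply the distortion function machinery developed at the start of Section~\ref{1-section-applications}. Since $F_{R_n^{(k)}}(x) = \phi_{n,k}(F_X(x))$ and $F_{S_n^{(k)}}(x) = \phi_{n,k}(F_Y(x))$ for every $x \in \mathbb{R}$, with $\phi_{n,k}$ as in \eqref{1-eq-kth-record-value-phi-n}, the baseline cdfs are $F_X$ and $F_Y$, and the common continuous distortion is $\phi_{n,k}$. By the sufficient condition \eqref{1-eq-ast-sufficient-condition}, it suffices to show that $\lim_{n \to \infty} \mu(\phi_{n,k}(A_0)) = 0$, where $A_0 = \{u \in (0,1) : F_X^{\leftarrow}(u) > F_Y^{\leftarrow}(u)\}$.

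First I would localize $A_0$ away from $u = 1$. The idea is that the hypothesis ``$F_X(x) \geq F_Y(x)$ for every $x \geq d$'' forces the right tail of the quantile comparison to favor $F_Y$. Concretely, set $u_0 = F_Y(d) < 1$. For any $u > u_0$, I would argue that $F_Y^{\leftarrow}(u) \geq d$ (because $F_Y(x) \leq F_Y(d) < u$ for $x < d$), so that $F_X(F_Y^{\leftarrow}(u)) \geq F_Y(F_Y^{\leftarrow}(u)) \geq u$ by the tail domination hypothesis, which yields $F_X^{\leftarrow}(u) \leq F_Y^{\leftarrow}(u)$. Thus $u \notin A_0$, giving $A_0 \subseteq (0, u_0]$.

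Next, since $\phi_{n,k}$ is nondecreasing and continuous with $\phi_{n,k}(0) = 0$, we have $\phi_{n,k}(A_0) \subseteq (0, \phi_{n,k}(u_0)]$, hence
\begin{equation*}
\mu(\phi_{n,k}(A_0)) \leq \phi_{n,k}(u_0).
\end{equation*}
Because $u_0 < 1$, the pointwise limit \eqref{1-eq-lim-phi-n-record-values} gives $\lim_{n \to \infty} \phi_{n,k}(u_0) = 0$, and the asymptotic usual stochastic order follows from \eqref{1-eq-ast-sufficient-condition}. For the stochastic precedence part, independence of $\{X_n\}$ and $\{Y_n\}$ induces independence of $R_n^{(k)}$ and $S_n^{(k)}$ for every $n$, so \cref{1-theorem-ast-asp} immediately upgrades $R_n^{(k)} \leq_{\textnormal{ast}} S_n^{(k)}$ to $R_n^{(k)} \leq_{\textnormal{asp}} S_n^{(k)}$ as $n \to \infty$.

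The only subtle step is the quantile comparison argument localizing $A_0$ to $(0, u_0]$: one has to handle the fact that the hypothesis $F_X \geq F_Y$ is only assumed on $[d, \infty)$, not globally, and that the left-continuous inverses may behave delicately where the cdfs are flat. The condition $F_Y(d) < 1$ is precisely what ensures $u_0 < 1$, which is where the convergence $\phi_{n,k}(u_0) \to 0$ is actually used; without strict inequality the argument would collapse, since $\phi_{n,k}(1) = 1$ for all $n$.
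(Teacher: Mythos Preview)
Your proposal is correct and follows essentially the same route as the paper: both arguments localize $A_0$ inside $(0,F_Y(d)]$ via a quantile comparison (you use $F_X(F_Y^{\leftarrow}(u)) \geq F_Y(F_Y^{\leftarrow}(u)) \geq u$, the paper uses the equivalent set inclusion $\{x>d:F_Y(x)\geq u\}\subseteq\{x>d:F_X(x)\geq u\}$), then bound $\mu(\phi_{n,k}(A_0))$ by $\phi_{n,k}(F_Y(d))\to 0$ via \eqref{1-eq-lim-phi-n-record-values}, and finally invoke \cref{1-theorem-ast-asp} for the precedence part.
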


\begin{proof}
	Let us choose $u \in (F_Y(d),1)$ arbitrarily. By assumption, $F_Y(x) \geq u$ implies that $F_X(x) \geq u$, for every $x>d$. Thus, $\{x>d : F_Y(x) \geq u\} \subseteq \{x>d : F_X(x) \geq u\}$. Also, note that $F_Y(x)<u$, for every $x \leq d$. Then $F_Y^{\leftarrow}(u)=\inf{\{x \in \mathbb{R} : F_Y(x) \geq u\}}=\inf{\{x>d : F_Y(x) \geq u\}} \geq \inf{\{x>d : F_X(x) \geq u\}} \geq \inf{\{x \in \mathbb{R} : F_X(x) \geq u\}}=F_X^{\leftarrow}(u)$. Hence, $A_0 \subseteq (0,F_Y(d)]$. Consequently, $\mu\left(\phi_{k,n}\left(A_0\right)\right) \leq \phi_{k,n}(F_Y(d))$, which goes to $0$, as $n \to \infty$, by the assumption that $F_Y(d)<1$ and \eqref{1-eq-lim-phi-n-record-values}. The proof for asymptotic usual stochastic order then follows from the sufficient condition, given in \eqref{1-eq-ast-sufficient-condition}. If $F_X$ and $F_Y$ are independent, then the asymptotic stochastic precedence order holds by \cref{1-theorem-ast-asp}.
\end{proof}

The next corollary follows from the above theorem by observing that $A_0 \medcup A_1 \subseteq (0,F_Y(d)]$ under its hypothesis.

\begin{corollary}\label{1-corollary-ast-record-values}
	Let $F_X$ and $F_Y$ be two cdfs such that there exists $d \in \mathbb{R}$ satisfying $F_Y(d)<1$ and $F_X(x)=F_Y(x)$ for every $x \geq d$. Let $\left\{X_n : n \in \mathbb{N}\right\}$ and $\left\{Y_n : n \in \mathbb{N}\right\}$ be two sequences of iid random variables with respective parent distribution functions $F_X$ and $F_Y$ and respective sequences of $k$th record values $\{R_n^{(k)} : n \in \mathbb{N}\}$ and $\{S_n^{(k)} : n \in \mathbb{N}\}$. Then $R_n^{(k)} =_{\textnormal{ast}} S_n^{(k)}$, as $n \to \infty$, for every $k \in \mathbb{N}$. Furthermore, $R_n^{(k)} =_{\textnormal{asp}} S_n^{(k)}$, as $n \to \infty$, for every $k \in \mathbb{N}$, if $F_X$ and $F_Y$ are independent. \hfill $\blacksquare$
\end{corollary}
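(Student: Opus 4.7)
The plan is to reduce the corollary to \cref{1-theorem-ast-record-values} by exploiting the symmetry in the hypothesis. Under the assumption $F_X(x)=F_Y(x)$ for every $x\geq d$, both inequalities $F_X(x)\geq F_Y(x)$ and $F_Y(x)\geq F_X(x)$ hold on $[d,\infty)$. So I would simply invoke \cref{1-theorem-ast-record-values} twice --- once with the pair $(F_X,F_Y)$ and once with the roles swapped --- to obtain $R_n^{(k)} \leq_{\textnormal{ast}} S_n^{(k)}$ and $S_n^{(k)} \leq_{\textnormal{ast}} R_n^{(k)}$, which together give $R_n^{(k)} =_{\textnormal{ast}} S_n^{(k)}$, as $n \to \infty$. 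The same double application gives asymptotic precedence equality under independence.

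Alternatively, to match the hint given in the statement of the corollary, I would mirror the proof of \cref{1-theorem-ast-record-values} and argue directly that $A_0 \medcup A_1 \subseteq (0,F_Y(d)]$. Indeed, fix $u \in (F_Y(d),1)$. The hypothesis forces $\{x>d : F_Y(x) \geq u\} = \{x>d : F_X(x) \geq u\}$ and both sets are nonempty (since $F_Y(d)<u<1$ and $F_Y$ reaches every level in $(F_Y(d),1)$ in $(d,\infty)$, with the same true of $F_X$ by the coincidence on $[d,\infty)$). Using that $F_Y(x)<u$ for $x\leq d$ and the same for $F_X$, taking infima yields $F_X^{\leftarrow}(u)=F_Y^{\leftarrow}(u)$, so $u \notin A_0 \medcup A_1$. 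Hence $A_0 \medcup A_1 \subseteq (0,F_Y(d)]$, and therefore
\[
\mu\!\left(\phi_{n,k}(A_0 \medcup A_1)\right) \leq \phi_{n,k}(F_Y(d)) \to 0, \quad \text{as } n\to\infty,
\]
by \eqref{1-eq-lim-phi-n-record-values} and the assumption $F_Y(d)<1$. Applying the sufficient condition \eqref{1-eq-ast-sufficient-condition} to $A_0$ and to $A_1$ (swapping the roles of $X$ and $Y$) yields both $R_n^{(k)} \leq_{\textnormal{ast}} S_n^{(k)}$ and $S_n^{(k)} \leq_{\textnormal{ast}} R_n^{(k)}$, whence asymptotic equality.

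For the asp statement, under independence each of the two ast relations above implies the corresponding asp relation by \cref{1-theorem-ast-asp}, and combining them gives $R_n^{(k)} =_{\textnormal{asp}} S_n^{(k)}$, as $n \to \infty$.

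There is no real obstacle here: the corollary is essentially an immediate symmetric reuse of the preceding theorem, and the only small point to check --- that the quantile argument in the proof of \cref{1-theorem-ast-record-values} upgrades from $A_0 \subseteq (0,F_Y(d)]$ to $A_0 \medcup A_1 \subseteq (0,F_Y(d)]$ when the inequality $F_X\geq F_Y$ on $[d,\infty)$ is replaced by equality --- is routine.
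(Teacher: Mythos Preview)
Your proposal is correct and matches the paper's approach: the paper simply states that the corollary follows from \cref{1-theorem-ast-record-values} by observing that $A_0 \medcup A_1 \subseteq (0,F_Y(d)]$ under the hypothesis, which is exactly your second argument. Your first (symmetry) argument is also valid --- note only that when swapping roles you implicitly use $F_X(d)=F_Y(d)<1$, which follows from the equality hypothesis at $x=d$.
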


The next two results provide sufficient conditions for asymptotic stochastic dominance and equality, from the perspectives of $L_1$-asymptotic usual stochastic order and $\mathcal{W}_2$-asymptotic usual stochastic order, for $R_n^{(k)}$ and $S_n^{(k)}$, as $n \to \infty$.

\begin{theorem}\label{1-theorem-L-1-record-values}
	Let $F_X$ and $F_Y$ be two cdfs such that there exists $d \in \mathbb{R}$ satisfying $F_Y(d)<1$ and $F_X(x) \geq F_Y(x)$ for every $x \geq d$. Let $\left\{X_n : n \in \mathbb{N}\right\}$ and $\left\{Y_n : n \in \mathbb{N}\right\}$ be two sequences of iid random variables with respective parent distribution functions $F_X$ and $F_Y$ and respective sequences of $k$th record values $\{R_n^{(k)} : n \in \mathbb{N}\}$ and $\{S_n^{(k)} : n \in \mathbb{N}\}$. Then $R_n^{(k)} \leq_{L_1\textnormal{-ast}} S_n^{(k)}$, as $n \to \infty$, for every $k \in \mathbb{N}$ if $\max{\{E\vert X \vert,E\vert Y \vert\}}<\infty$ and $R_n^{(k)} \leq_{\mathcal{W}_2\textnormal{-ast}} S_n^{(k)}$, as $n \to \infty$, for every $k \in \mathbb{N}$ if $\max{\{E\left(X^2\right),E\left(Y^2\right)\}}<\infty$.
\end{theorem}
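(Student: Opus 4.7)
The plan is to verify the sufficient condition \eqref{1-eq-L-1-sufficient-condition} established earlier in this section, which reduces both orderings to the uniform bound
\[
\lim_{n \to \infty} \sup_{v \,\in A_0} \phi_{n,k}'(v) = 0,
\]
in combination with the inequalities
\[
\int_{A_{0,n}} \bigl\vert F_{R_n^{(k)}}^{\leftarrow}(u) - F_{S_n^{(k)}}^{\leftarrow}(u) \bigr\vert \, du \leq \mathcal{W}_1(F_X, F_Y) \sup_{v \,\in A_0} \phi_{n,k}'(v)
\]
and its $\mathcal{W}_2$-squared analogue. Since $E \vert X \vert, E \vert Y \vert < \infty$ forces $\mathcal{W}_1(F_X, F_Y) < \infty$ and $E(X^2), E(Y^2) < \infty$ forces $\mathcal{W}_2(F_X, F_Y) < \infty$, both conclusions then follow in one stroke from the same derivative estimate.

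First I would import the geometric fact already used in the proof of \cref{1-theorem-ast-record-values}: the hypothesis $F_X \geq F_Y$ throughout $[d, \infty)$ implies $A_0 \subseteq (0, F_Y(d)]$. If $F_Y(d) = 0$ then $A_0 = \emptyset$ and there is nothing to prove, so I may assume $c := 1 - F_Y(d) \in (0, 1)$. Next, differentiating \eqref{1-eq-kth-record-value-phi-n} via the fundamental theorem of calculus and the chain rule gives
\[
\phi_{n,k}'(u) = \frac{k \, (-k \ln(1-u))^{n-1} (1-u)^{k-1}}{(n-1)!}, \qquad u \in (0,1).
\]

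For $u \in (0, F_Y(d)]$ we have $1-u \in [c, 1)$, hence $-k \ln(1-u) \leq M := -k \ln c$ and $(1-u)^{k-1} \leq 1$ (since $k \geq 1$). Consequently,
\[
\sup_{v \,\in A_0} \phi_{n,k}'(v) \leq \frac{k M^{n-1}}{(n-1)!},
\]
and the right-hand side vanishes as $n \to \infty$ because it is a constant multiple of the general term of the convergent series for $e^M$. Feeding this into the two Wasserstein bounds above closes the proof.

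No step here is particularly deep; the main substantive observation is simply that the distortion $\phi_{n,k}$ concentrates its mass arbitrarily close to $u = 1$ as $n$ grows, so its derivative decays at factorial speed on any set bounded away from $1$. This more than suffices to absorb the (finite) Wasserstein factors coming from the moment hypotheses.
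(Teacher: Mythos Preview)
Your proof is correct and follows essentially the same approach as the paper: reduce to the sufficient condition $\lim_{n\to\infty}\sup_{v\in A_0}\phi_{n,k}'(v)=0$, use $A_0\subseteq(0,F_Y(d)]$, and bound the explicit derivative by a term of the form $kM^{n-1}/(n-1)!$. The only cosmetic difference is that the paper first checks $\phi_{n,k}'$ is increasing on a neighbourhood of $(0,F_Y(d)]$ for large $n$ and then evaluates at the endpoint, whereas you bound the two factors $(1-u)^{k-1}$ and $(-k\ln(1-u))^{n-1}$ separately---your route is slightly more direct but lands on the same estimate.
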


\begin{proof}
	Observe that the proof follows if $\lim_{n \to \infty} \sup_{u \,\in A_0} \phi_{k,n}'(u)=0$. From \eqref{1-eq-kth-record-value-phi-n}, simple calculation shows that $\phi_{k,n}'$ is increasing in $u \in (0,(F_Y(d)+1)/2)$, whenever $n>1-(k-1)\ln{(1-\{(F_Y(d)+1)/2\})}$. Since $A_0 \subseteq (0,F_Y(d)]$, we have $\sup_{u \,\in A_0} \phi_{k,n}'(u) \leq \phi_{k,n}'(F_Y(d))=k(1-F_Y(d))^{k-1}\{-k\ln{(1-F_Y(d))}\}^{n-1}/(n-1)!$, which goes to $0$, as $n \to \infty$.
\end{proof}

The proof of the next corollary follows from the above theorem by noting that $A_0 \cup A_1 \subseteq (0,F_Y(d)]$ under its hypothesis.

\begin{corollary}\label{1-corollary-L-1-record-values-equality}
	Let $F_X$ and $F_Y$ be two cdfs such that there exists $d \in \mathbb{R}$ satisfying $F_Y(d)<1$ and $F_X(x)=F_Y(x)$ for every $x \geq d$. Let $\left\{X_n : n \in \mathbb{N}\right\}$ and $\left\{Y_n : n \in \mathbb{N}\right\}$ be two sequences of iid random variables with respective parent distribution functions $F_X$ and $F_Y$ and respective sequences of $k$th record values $\{R_n^{(k)} : n \in \mathbb{N}\}$ and $\{S_n^{(k)} : n \in \mathbb{N}\}$. Then $R_n^{(k)} =_{L_1\textnormal{-ast}} S_n^{(k)}$, as $n \to \infty$, for every $k \in \mathbb{N}$ if $\max{\{E\vert X \vert,E\vert Y \vert\}}<\infty$ and $R_n^{(k)} =_{\mathcal{W}_2\textnormal{-ast}} S_n^{(k)}$, as $n \to \infty$, for every $k \in \mathbb{N}$ if $\max{\{E\left(X^2\right),E\left(Y^2\right)\}}<\infty$.
\end{corollary}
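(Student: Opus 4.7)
The plan is to reduce the corollary directly to Theorem \ref{1-theorem-L-1-record-values} by exploiting the symmetry of the hypothesis. The equality $F_X(x)=F_Y(x)$ for $x\geq d$ is equivalent to the conjunction of the two one-sided inequalities $F_X(x)\geq F_Y(x)$ and $F_Y(x)\geq F_X(x)$, both valid on $[d,\infty)$. Applying Theorem \ref{1-theorem-L-1-record-values} with $X,Y$ as stated yields $R_n^{(k)} \leq_{L_1\textnormal{-ast}} S_n^{(k)}$, and swapping the roles of $X$ and $Y$ and applying the same theorem yields $S_n^{(k)} \leq_{L_1\textnormal{-ast}} R_n^{(k)}$. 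By the definition of equality in $L_1$-asymptotic usual stochastic order, these combine to give $R_n^{(k)} =_{L_1\textnormal{-ast}} S_n^{(k)}$ as $n\to\infty$, under the moment condition $\max\{E\vert X\vert,E\vert Y\vert\}<\infty$.

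The remark in the statement about $A_0\cup A_1 \subseteq (0,F_Y(d)]$ offers a slightly more direct route that avoids invoking the previous theorem twice. First I would verify the inclusion: whenever $u>F_Y(d)=F_X(d)$, neither $F_X^{\leftarrow}(u)$ nor $F_Y^{\leftarrow}(u)$ can be less than or equal to $d$, since $d$ is excluded from both sets $\{x:F_X(x)\geq u\}$ and $\{x:F_Y(x)\geq u\}$. Thus both quantiles lie strictly above $d$, and on the region $(d,\infty)$ the two cdfs coincide by hypothesis, so the two infima coincide, giving $F_X^{\leftarrow}(u)=F_Y^{\leftarrow}(u)$ and hence $u\notin A_0\cup A_1$. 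Then, following the estimate used in the proof of Theorem \ref{1-theorem-L-1-record-values}, I would bound
\[
\sup_{u\,\in\,A_0\cup A_1}\phi_{k,n}'(u)\leq \phi_{k,n}'(F_Y(d)) = \frac{k(1-F_Y(d))^{k-1}\{-k\ln(1-F_Y(d))\}^{n-1}}{(n-1)!},
\]
which tends to $0$ as $n\to\infty$ since $F_Y(d)<1$. Combined with the finiteness of $\mathcal{W}_1(F_X,F_Y)$ (ensured by the first-moment hypothesis), this controls both $\int_{A_{0,n}}\vert F_{R_n^{(k)}}^{\leftarrow}-F_{S_n^{(k)}}^{\leftarrow}\vert\,du$ and its analogue with $A_{0,n}$ replaced by $A_{1,n}$, yielding both $R_n^{(k)}\leq_{L_1\textnormal{-ast}} S_n^{(k)}$ and the reverse in a single stroke.

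The $\mathcal{W}_2$ case proceeds identically, with $\mathcal{W}_2(F_X,F_Y)<\infty$ secured by the second-moment hypothesis. There is essentially no obstacle: the corollary is a pure symmetrization of Theorem \ref{1-theorem-L-1-record-values}, and the only care required is at the boundary point $u=F_Y(d)$, which is harmless because strict inequality $u>F_Y(d)$ is already enough to force the quantiles to coincide, so the exceptional set is contained in $(0,F_Y(d)]$ rather than merely in $(0,F_Y(d))$.
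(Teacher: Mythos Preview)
Your proposal is correct, and the second route you describe---verifying $A_0\cup A_1\subseteq(0,F_Y(d)]$ and then reusing the derivative bound from the proof of Theorem~\ref{1-theorem-L-1-record-values}---is exactly the paper's argument. Your first route (applying Theorem~\ref{1-theorem-L-1-record-values} twice, once with the roles of $X$ and $Y$ swapped, noting that $F_X(d)=F_Y(d)<1$ makes the swapped hypotheses valid) is an equally clean alternative that the paper does not spell out.
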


\section{Conclusion}\label{1-section-conclusion}

In this work, four different asymptotic stochastic orders have been proposed and analyzed. The problem of choosing one of these in a given situation can be tricky for a practitioner. The asymptotic stochastic orders developed in the quantitative approach have no direct connection with the ones defined in the semi-quantitative approach. See \cref{1-example-ast-L-1-1} and \cref{1-example-ast-L-1-2} for illustrations of this point. Keeping the fact that the chosen order for a given situation should give a conclusion that matches with that obtained from common sense in mind, we roughly state a general rule of thumb. If the sign, as well as the magnitude of the differences of the types $F_{X_t}^{\leftarrow}(u)-F_{Y_t}^{\leftarrow}(u)$ or $F_{X_t}(x)-F_{Y_t}(x)$ are important in a particular situation, the practitioners should apply an asymptotic stochastic order, derived from the quantitative approach. If the sign of these differences is important, but the magnitude of the same has no impact, then one may opt to choose the asymptotic usual stochastic order. Finally, if there is a strong indication that the stochastic processes are dependent, then any asymptotic stochastic order based on the usual stochastic order may give misleading conclusions and hence, the practitioners may choose to employ the asymptotic stochastic precedence order. Just like the different stochastic orders for random variables, the choice of the asymptotic stochastic order to be considered in a given situation is crucial and may result into misleading conclusions, if applied without proper judgment of the situation at hand. For instance, \cref{1-example-ast-L-1} depicts a situation where the asymptotic usual stochastic order does not work and one has to take a quantitative route to reach a justifiable conclusion. In the end, the orders considered here are only tools that compare stochastic processes asymptotically from specific perspectives. These are meant to guide the practitioner to reach a reasonable conclusion and should not be taken as final deciders.\vs

{\bf Acknowledgement.} The first author is supported by Senior Research Fellowship from University Grants Commission, Government of India.

\baselineskip=16pt

\bibliographystyle{newapa}
\bibliography{references}\vs

\section{Appendix}\label{1-section-appendix}

\begin{proof}[Proof of \cref{1-lemma-quantile-distribution}]
	Let $S=\{x \in \mathbb{R} : F_X(x) < F_Y(x)\}$. Now,
	\begin{equation}\label{1-proof-theorem-ast-inverse}
		P_X(S)=\int_S dF_X(x)=\mu\left(F_X(S)\right),
	\end{equation}
	where $F_X(S)=\{F_X(x) : x \in S\}$. If $u \in F_X(S)$, then there exists $x \in [F_X^{\leftarrow}(u),F_X^{\rightarrow}(u)]$, such that $F_Y(x) > u$. Note that, $F_Y(y) \leq u$, for every $y \leq F_Y^{\rightarrow}(u)$. Hence, $x > F_Y^{\rightarrow}(u)$. Consequently, $F_X^{\rightarrow}(u) > F_Y^{\rightarrow}(u)$. Thus, $u \in F_X(S) \Rightarrow F_X^{\rightarrow}(u) > F_Y^{\rightarrow}(u)$. Again, suppose that $u \notin F_X(S)$. Then, for every $x \in [F_X^{\leftarrow}(u),F_X^{\rightarrow}(u)]$, we must have $F_Y(x) \leq u$ and hence $F_Y^{\rightarrow}(u) \geq F_X^{\rightarrow}(u)$. Taking contrapositive, we have $F_X^{\rightarrow}(u) > F_Y^{\leftarrow}(u) \Rightarrow u \in F_X(S)$. Combining the observations, we obtain
	\begin{equation}\label{1-proof-theorem-ast-inverse-sets}
		F_X(S)=\left\{u \in (0,1) : F_X^{\rightarrow}(u) > F_Y^{\rightarrow}(u)\right\}.
	\end{equation}
	Substituting \eqref{1-proof-theorem-ast-inverse-sets} into \eqref{1-proof-theorem-ast-inverse}, the proof follows.
\end{proof}

\begin{proof}[Proof of \cref{1-lemma-no-inconsistency}]
	Since $\{x \in \mathbb{R} : F_X(x) < F_Y(x)\}$ is open, we can write the set as $\bigcup_{k=1}^{\infty} I_k$, where $\left\{I_k : k \in \mathbb{N}\right\}$ is a countable collection of disjoint open intervals. Then
	\begin{equation}\label{1-eq-no-inconsistency}
		P_X\left(\left\{x \in \mathbb{R} : F_{X_t}(x) < F_{Y_t}(x)\right\}\right)=\sum_{k=1}^{\infty} P_X\left(I_k\right)=\sum_{k=1}^{\infty} \left\{F_X\left(\sup{I_k}\right)-F_X\left(\inf{I_k}\right)\right\},
	\end{equation}
	where the last equality follows from continuity of $F_X$. Let $k \in \mathbb{N}$. By construction of $I_k$, there exists $\epsilon>0$ such that $F_X(x) > F_Y(x)$ if $x \in (\inf{I_k}-\epsilon,\inf{I_k}]$ and $F_X(x) \leq F_Y(x)$ if $x \in (\inf{I_k},\inf{I_k}+\epsilon)$. Then it follows by continuity of $F_X$ and $F_Y$ that $F_X\left(\inf{I_k}\right)=F_Y\left(\inf{I_k}\right)$. Similarly, we have $F_X\left(\sup{I_k}\right)=F_Y\left(\sup{I_k}\right)$. These equalities hold for every $k \in \mathbb{N}$. Then, from \eqref{1-eq-no-inconsistency}, we obtain that $P_X\left(\left\{x \in \mathbb{R} : F_X(x) < F_Y(x)\right\}\right)$ is equal to
	\[
	\sum_{k=1}^{\infty} \left\{F_Y\left(\sup{I_k}\right)-F_Y\left(\inf{I_k}\right)\right\}=\sum_{k=1}^{\infty} P_Y\left(I_k\right)=P_Y\left(\left\{x \in \mathbb{R} : F_X(x) < F_Y(x)\right\}\right),
	\]
	where the first equality is due to continuity of $F_Y$. Hence the proof.
\end{proof}

\begin{proof}[Proof of \cref{1-lemma-ast-asp}]
	Suppose that $F(x)<G(x)$. By right-continuity of $F$, we have $F(x)<F(F^{\leftarrow}(G(x)))$. Consequently, we obtain $x<F^{\leftarrow}(G(x))$. Also, observe that $G^{\leftarrow}(G(x)) \leq x$. Hence, $G(x) \in \{u \in (0,1) : F^{\leftarrow}(u)>G^{\leftarrow}(u)\}$, which proves \eqref{1-eq-lemma-ast-asp}. Now, suppose that $G$ is continuous. Let $F^{\leftarrow}(u)>G^{\leftarrow}(u)$. Then we have $F(G^{\leftarrow}(u))<u=G(G^{\leftarrow}(u))$. Hence $G^{\leftarrow}(u) \in \{x \in \mathbb{R} : F(x)<G(x)\}$. Thus, $G(G^{\leftarrow}(u))=u \in \{G(x) : F(x)<G(x)\}$, which proves the reverse direction of \eqref{1-eq-lemma-ast-asp}.
\end{proof}

\begin{proof}[Proof of \cref{1-lemma-ast-expectation-inequality-L-1}]
	Let $U$ be a random variable following uniform distribution with support $[0,1]$ and let $x \in \mathbb{R}$. Observe that the two events $\{U \leq F(x)\}$ and $\{F^{\leftarrow}(U) \leq x\}$ imply each other. Hence, $P(F^{\leftarrow}(U) \leq x)=F(x)$. Consequently $E(X)=E(F^{\leftarrow}(U))$. The proof follows from the fact that $E(F^{\leftarrow}(U))$ is equal to the right-hand side of \eqref{1-eq-lemma-ast-expectation-inequality-L-1}.
\end{proof}

\begin{proof}[Proof of \cref{1-lemma-increasing-function-asymptotic-usual-stochastic-order}]
	Let $u \in (0,1)$. Note that $g^{\rightarrow}(y)<f^{\leftarrow}(u)$ implies that $f(g^{\rightarrow}(y))<u$. Again, if $g^{\rightarrow}(y) \geq f^{\leftarrow}(u)$, then $f(g^{\rightarrow}(y)) \geq f(f^{\leftarrow}(u)) \geq u$, where the first and the second inequalities are due to nondecreasingness and right-continuity of $f$, respectively. Hence, $f(g^{\rightarrow}(y)) \geq u$ if and only if $g^{\rightarrow}(y) \geq f^{\leftarrow}(u)$. Then $(f \circ g^{\rightarrow})^{\leftarrow}(u)=(g^{\rightarrow})^{\leftarrow}(f^{\leftarrow}(u))$. Observe that, for any function $h:\mathbb{R} \to \mathbb{R}$, $(h^{\rightarrow})^{\leftarrow}(x) \leq h(x)$, for every $x \in \mathbb{R}$. Furthermore, if $h$ is nondecreasing and left-continuous, then $(h^{\rightarrow})^{\leftarrow}(x)=h(x)$, for every $x \in \mathbb{R}$. Then we have $(g^{\rightarrow})^{\leftarrow}(f^{\leftarrow}(u))=g(f^{\leftarrow}(u))$. Hence, $(f \circ g^{\rightarrow})^{\leftarrow}(u)=g(f^{\leftarrow}(u))$.
\end{proof}

\begin{proof}[Proof of \cref{1-lemma-phi-F-inverse-phi-right-continuous}]
	Suppose that $F(x) \leq \phi^{\rightarrow}(u)$. If $\phi$ is left-continuous, then it follows that $\phi(F(x)) \leq u$. Thus, $(\phi \circ F)^{\rightarrow}(u) \geq F^{\rightarrow}(\phi^{\rightarrow}(u))$. If $\phi$ is right-continuous, then proceeding in a similar way, we get $(\phi \circ F)^{\leftarrow}(u) \leq F^{\leftarrow}(\phi^{\leftarrow}(u))$. The proof is completed by noting that the reverse inequalities $(\phi \circ F)^{\rightarrow}(u) \leq F^{\rightarrow}(\phi^{\rightarrow}(u))$ and $(\phi \circ F)^{\leftarrow}(u) \geq F^{\leftarrow}(\phi^{\leftarrow}(u))$ hold for every distortion function $\phi$ and every cdf $F$.
\end{proof}
	
\end{document}